\newcommand{\Xcomment}[1]{}
\newtheorem{theorem}{Theorem}[section]
\newtheorem{lemma}[theorem]{Lemma}
\newtheorem{corollary}[theorem]{Corollary}
\newtheorem{prop}[theorem]{Proposition}
\makeatletter \@addtoreset{equation}{section} \makeatother
\newenvironment{proof}{\noindent{\bf Proof}~}%
{\hfill$\qed$\medskip}
\def\qed{ \ \vrule width.1cm height.3cm depth0cm}
\newenvironment{numitem1}{\refstepcounter{equation}\begin{enumerate}%
\item[(\thesection.\arabic{equation})]}{\end{enumerate}}
\newcommand{\refeq}[1]{(\ref{eq:#1})}  
\renewcommand{\section}{\@startsection{section}{1}{0pt}%
{-3.5ex plus -1ex minus -.2ex}{2.3ex plus .2ex}%
{\normalfont\Large}}
\renewcommand{\subsection}{\@startsection{subsection}{2}{0pt}%
{-3.0ex plus -1ex minus -.2ex}{1.5ex plus .2ex}%
{\normalfont\normalsize\bf}}
\def\Rset{{\mathbb R}}
\def\Cscr{{\cal C}}
\def\Dscr{{\cal D}}
\def\Escr{{\cal E}}
\def\Fscr{{\cal F}}
\def\Iscr{{\cal I}}
\def\Lscr{{\cal L}}
\def\Pscr{{\cal P}}
\def\Rscr{{\cal R}}
\def\Zscr{{\cal Z}}
\def\tilde{\widetilde}
\def\hat{\widehat}
\def\bar{\overline}
\def\eps{\epsilon}
\def\lessdoteq{\,{\underline{\lessdot}}\,}
\def\wsep{$\hbox{\unitlength=1mm\begin{picture}(8,4)%
\put(3.5,1){\oval(6,3)}\put(1.2,0){{\rm ws}}\end{picture}}$}
\begin{document}

 \begin{center}
{\large\bf On maximal weakly separated set-systems}
 \end{center}

 \begin{center}
{\sc Vladimir~I.~Danilov}\footnote[2] {Central Institute of Economics and
Mathematics of the RAS, 47, Nakhimovskii Prospect, 117418 Moscow, Russia;
emails: danilov@cemi.rssi.ru (V.I.~Danilov); koshevoy@cemi.rssi.ru
(G.A.~Koshevoy).},
{\sc Alexander~V.~Karzanov}\footnote[3]{Institute for System Analysis of the
RAS, 9, Prospect 60 Let Oktyabrya, 117312 Moscow, Russia; email:
sasha@cs.isa.ru.},
{\sc Gleb~A.~Koshevoy}$^2$
\end{center}


 \begin{quote}
 {\bf Abstract.} \small
For a permutation $\omega\in S_n$, Leclerc and Zelevinsky~\cite{LZ} introduced
a concept of $\omega$-{\em chamber weakly separated collection} of subsets of
$\{1,2,\ldots,n\}$ and conjectured that all inclusion-wise maximal collections
of this sort have the same cardinality $\ell(\omega)+n+1$, where $\ell(\omega)$
is the length of $\omega$. We answer affirmatively this conjecture and present
a generalization and additional results.

 \medskip
{\em Keywords}\,: weakly separated sets, rhombus tiling, generalized tiling,
weak Bruhat order, cluster algebras

\medskip
{\em AMS Subject Classification}\, 05C75, 05E99
  \end{quote}

\parskip=3pt

\section{Introduction}  \label{sec:intr}

For a positive integer $n$, let $[n]$ denote the ordered set of elements
$1,2,\ldots,n$. We deal with two binary relations on subsets of $[n]$.
  \begin{numitem1}
For $A,B\subseteq[n]$, we write:
  \begin{itemize}
\item[(i)] $A\lessdot B$ if $B-A$ is
nonempty and if $i<j$ holds for any $i\in A-B$ and $j\in B-A$ (where $A'-B'$
stands for the set difference $\{i'\colon A'\ni i'\not\in B'\})$;
\item[(ii)] $A\rhd B$ if both $A-B$ and $B-A$
are nonempty and if the set $B-A$ can be (uniquely) expressed as a disjoint
union $B'\sqcup B''$ of nonempty subsets so that $B'\lessdot A-B\lessdot B''$.
  \end{itemize}
  \label{eq:2relat}
   \end{numitem1}
Note that these relations need not be transitive in general. For example,
$13\lessdot 23\lessdot 24$ but $13\,\lefteqn{\not}\!\lessdot 24$; similarly,
$346\rhd 256\rhd 157$ but $346\,\lefteqn{\not}\!\rhd 157$, where for brevity we
write $i\ldots j$ instead of $\{i\}\cup\ldots\cup\{j\}$. \medskip

 \noindent \textbf{Definition 1}
~Sets $A,B\subseteq[n]$ are called \emph{weakly separated} (from
each other) if either $A\lessdot B$, or $B\lessdot A$, or $A\rhd
B$ and $|A|\ge B$, or $B\rhd A$ and $|B|\ge|A|$, or $A=B$. A
collection $\Cscr\subseteq 2^{[n]}$ is called weakly separated if
any two of its members are weakly separated. \medskip

We will usually abbreviate the term ``weakly separated collection'' to
``ws-collection''. When a set $X$ is weakly separated from a set $Y$ (from all
sets in a collection $\Cscr$), we write $X\wsep Y$ (resp. $X\wsep \Cscr$).
\medskip

 \noindent \textbf{Definition 2}
~Let $\omega$ be a permutation on $[n]$. A subset $X\subset [n]$ is called an
$\omega$-{\em chamber set} if $i\in X$, $j<i$ and $\omega(j)<\omega(i)$ imply
$j\in X$. A ws-collection $\Cscr\subseteq 2^{[n]}$ is called an {\em
$\omega$-chamber ws-collection} if all members of $\Cscr$ are $\omega$-chamber
sets.
\medskip

These notions were introduced by Leclerc and Zelevinsky in~\cite{LZ} where
their importance is demonstrated, in particular, in connection with the problem
of characterizing quasicommuting quantum flag minors of a generic $q$-matrix.
(Note that~\cite{LZ} deals with a relation $\prec$ which is somewhat different
from $\lessdot$; nevertheless, Definition~1 is consistent with the
corresponding definition in~\cite{LZ}. The term ``$\omega$-chamber'' for a set
$X$ is motivated by the fact that such an $X$ corresponds to a face, or
\emph{chamber}, in the pseudo-line arrangement related to some reduced word for
$\omega$; see~\cite{BFZ}.)

Let $\ell(\omega)$ denote the \emph{length} of $\omega$, i.e. the number of
pairs $i<j$ such that $\omega(j)<\omega(i)$ (\emph{inversions}). It is shown
in~\cite{LZ} that the cardinality $|\Cscr|$ of any $\omega$-chamber
ws-collection $\Cscr\subseteq 2^{[n]}$ does not exceed $\ell(\omega)+n+1$ and
is conjectured (Conjecture~1.5 there) that this bound is achieved by \emph{any}
(inclusion-wise) maximal collection among these:

 \begin{itemize}
\item[(C)] For any permutation $\omega$ on $[n]$, ~$|\Cscr|=\ell(\omega)+n+1$
holds for all maximal $\omega$-chamber ws-collections $\Cscr\subseteq 2^{[n]}$.
  \end{itemize}
The main purpose of this paper is to answer this conjecture.
\medskip

\noindent\textbf{Theorem A} ~\emph{Statement~(C) is valid.} \medskip

The \emph{longest} permutation $\omega_0$ on $[n]$ (defined by $i\mapsto
n-i+1$) is of especial interest, many results in~\cite{LZ} are devoted just to
this case, and~(C) with $\omega=\omega_0$ has been open so far as well. Since
$\omega_0(j)>\omega_0(i)$ for any $j<i$, no ``chamber conditions'' are imposed
in this case in essence, i.e. the set of $\omega_0$-chamber ws-collections
consists of all ws-collections. The length of $\omega_0$ is equal to
$\binom{n}{2}$, so the above upper bound turns into $\binom{n+1}{2}+1$. Then
the assertion in the above theorem is specified as follows. \medskip

\noindent\textbf{Theorem B} \emph{~All maximal ws-collections in $2^{[n]}$ have
the same cardinality $\binom{n+1}{2}+1$.} \medskip

We refer to a ws-collection of this cardinality as a \emph{largest} one and
denote the set of these collections by ${\bf W}_n$. An important instance is
the set $\Iscr_n$ of all intervals $[p..q]:=\{p,p+1,\ldots,q\}$ in $[n]$,
including the ``empty interval'' $\emptyset$. One can see that for a
ws-collection $\Cscr$, the collection $\{[n]-X\colon X\in\Cscr\}$ is weakly
separated as well; it is called the \emph{complementary ws-collection} of
$\Cscr$ and denoted by co-$\Cscr$. Therefore, co-$\Iscr_n$, the set of
\emph{co-intervals} in $[n]$, is also a largest ws-collection. In~\cite{LZ} it
is shown that ${\bf W}_n$ is preserved under so-called \emph{weak raising
flips} (which transform one collection into another) and is conjectured
(Conjecture~1.8 there) that in the poset structure on ${\bf W}_n$ induced by
such flips, $\Iscr_n$ and co-$\Iscr_n$ are the unique minimal and unique
maximal elements, respectively. That conjecture was affirmatively answered
in~\cite{DKK-09}.

We will show that Theorem~A can be obtained relatively easily from Theorem~B.
The proof of the latter theorem is more intricate and takes the most part of
this paper. The breakthrough step on this way consists in showing the following
lattice property for a largest ws-collection $\Cscr$: the partial order
$\prec^\ast$ on $\Cscr$ given by $A\prec^\ast B\Longleftrightarrow A\lessdot
B\; \&\; |A|\le |B|$ forms a lattice. To prove this and some other intermediate
results we will essentially use results and constructions from our previous
work~\cite{DKK-09}.

The main result in~\cite{DKK-09} shows the coincidence of four classes of
collections: (i) the set of \emph{semi-normal bases} of tropical Pl\"ucker
functions on $2^{[n]}$; (ii) the set of \emph{spectra} of certain collections
of $n$ curves on a disc in the plane, called \emph{proper wirings} (which
generalize commutation classes of pseudo-line arrangements); (iii) the set
${\bf ST}_n$ of \emph{spectra} of so-called \emph{generalized tilings} on an
$n$-zonogon (a $2n$-gon representable as the Minkowsky sum of $n$ generic
line-segments in the plane); and (iv) the set ${\bf W}_n$. Objects mentioned
in~(i) and~(ii) are beyond our consideration in this paper (for definitions,
see~\cite{DKK-09}), but we will extensively use the generalized tiling model
and rely on the equality ${\bf ST}_n={\bf W}_n$. Our goal is to show the
following property: \medskip

\emph{Any ws-collection can be extended to the spectrum of some generalized
tiling}, \medskip

\noindent whence Theorem~B will immediately follow. Due to this property,
generalized tilings give a geometric model for ws-collections.

Roughly speaking, a generalized tiling, briefly called a \emph{g-tiling}, is a
certain generalization of the notion of a \emph{rhombus tiling}. While the
latter is a subdivision of an $n$-zonogon $Z$ into rhombi, the former is a
cover of $Z$ with rhombi that may overlap in a certain way. (It should be noted
that rhombus tilings have been well studied, and one of important properties of
these is that their ``spectra'' turn out to be exactly the maximal strongly
separated collections, where $\Cscr\subseteq 2^{[n]}$ is called \emph{strongly
separated} if any two members of $\Cscr$ obey relation $\lessdot$ as
in~\refeq{2relat}(i). Leclerc and Zelevinsky explored such collections
in~\cite{LZ} in parallel with ws-collections. In particular, they established a
counterpart of Theorem~A saying that the cardinality of any maximal strongly
separated $\omega$-chamber collection is exactly $\ell(\omega)+n+1$. For a
wider discussion and related topics, see
also~\cite{BFZ,DKK-08,El,Fan,Kn,Stem}.)

This paper is organized as follows. Section~\ref{sec:omega} explains how to
reduce Theorem~A to Theorem~B. Then we start proving the latter theorem; the
whole proof lasts throughout Sections~\ref{sec:tiling}--\ref{sec:2posets}.
Section~\ref{sec:tiling} recalls the definitions of a g-tiling and its spectrum
and gives a review of properties of these objects established in~\cite{DKK-09}
and important for us. Sections~\ref{sec:proof} proves Theorem~B in the
assumption of validity of the above-mentioned lattice property of largest
ws-collections. Then there begins a rather long way of proving the latter
property stated in Theorem~\ref{tm:prec-lat}. In Section~\ref{sec:aux_graph} we
associate to a g-tiling $T$ a certain acyclic directed graph $\Gamma=\Gamma_T$
whose vertex set is the spectrum $\mathfrak{S}_T$ of $T$ (forming a largest
ws-collection) and show that the natural partial order $\prec_\Gamma$ induced
by $\Gamma$ forms a lattice, which is not difficult. Section~\ref{sec:2posets}
is devoted to proving the crucial property that $\prec_\Gamma$ coincides with
the partial order $\prec^\ast$ on $\mathfrak{S}_T$, thus yielding
Theorem~\ref{tm:prec-lat} and completing the proof of Theorem~B; this is
apparently the most sophisticated part of the paper where especial
combinatorial techniques of handling g-tilings are elaborated. The concluding
Section~\ref{sec:concl} presents additional results and finishes with a
generalization of Theorem~A. This generalization (Theorem~A$'$) deals with two
permutations $\omega',\omega$ on $[n]$ such that each inversion of $\omega'$ is
an inversion of $\omega$ (in this case the pair $(\omega',\omega)$ is said to
obey the \emph{weak Bruhat relation}). It asserts that all maximal
ws-collections whose members $X$ are $\omega$-chamber sets and simultaneously
satisfy the condition: $i\in X\; \&\; j>i\; \&\; \omega'(j)<\omega'(i)
\Longrightarrow j\in X$, have the same cardinality, namely,
$\ell(\omega)-\ell(\omega')+n+1$. When $\omega'$ is the identical permutation
$i\mapsto i$, this turns into Theorem~A.
\smallskip

Later on, for a set $X\subset[n]$, distinct elements $i,\ldots,j\in[n]-X$ and
an element $k\in X$, we usually abbreviate $X\cup\{i\}\cup\ldots\cup\{j\}$ as
$Xi\ldots j$, and $X-\{k\}$ as $X-k$. \smallskip

{\bf Acknowledgments}. We thank the referees for useful remarks and
suggestions. This work is partially supported by the Russian Foundation of
Basic Research and the Ministry of National Education of France (project RFBR
10-01-9311-CNRSL-a). A part of this research was done while the third author
was visiting RIMS, Kyoto University and IHES, Bures-sur-Yvette and he thanks
these institutes for hospitality.

\section{Maximal $\omega$-chamber ws-collections}  \label{sec:omega}

In this section we explain how to derive Theorem~A from Theorem~B. The proof
given here is direct and relatively short, though rather technical. Another
proof, which is more geometric and appeals to properties of tilings, will be
seen from a discussion in Section~\ref{sec:concl}. Let $\omega$ be a
permutation on $[n]$.

For $k=0,\ldots,n$, let $I_\omega^k$ denote the set
$\omega^{-1}[k]=\{i\colon \omega(i)\in[k]\}$, called $k$-th
\emph{ideal} for $\omega$ (it is an ideal of the linear order on
$[n]$ given by: $i\prec j$ if $\omega(i)<\omega(j)$). We will use
the following auxiliary collection
   \begin{equation} \label{eq:C0}
\Cscr^0=\Cscr^0_\omega:=\{I_\omega^k\cap[j..n]\colon 1\le j\le
\omega^{-1}(k),\; 0\le k\le n\},
  \end{equation}
where possible repeated sets are ignored and where
$I_\omega^0:=\emptyset$. The role of this collection is emphasized by
the following

  \begin{theorem} \label{tm:checker}
Let $X\subset[n]$ and $X\not\in\Cscr^0$. The following properties are
equivalent:

{\rm(i)} $X\wsep \Cscr^0$ (i.e. $X$ is weakly separated from all sets in
$\Cscr^0$);

{\rm(ii)} $X$ is an $\omega$-chamber set.
  \end{theorem}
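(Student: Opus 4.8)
The plan is to prove the equivalence by establishing each implication separately, with the bulk of the work lying in showing that weak separation from the structured collection $\Cscr^0$ forces the chamber condition. Throughout I would exploit the explicit description of the members of $\Cscr^0$: each is of the form $I_\omega^k\cap[j..n]$, that is, the set of positions $i\ge j$ whose $\omega$-value lies in $[k]$. It is convenient to record what failure of the $\omega$-chamber condition means concretely: $X$ is \emph{not} an $\omega$-chamber set precisely when there exist indices $j<i$ with $\omega(j)<\omega(i)$, $i\in X$, but $j\notin X$. I would call such a pair $(j,i)$ a \emph{violation}.

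\medskip
\noindent\textbf{Direction (ii)$\Rightarrow$(i).} Assuming $X$ is an $\omega$-chamber set, I would verify directly that $X$ is weakly separated from an arbitrary member $Y=I_\omega^k\cap[j..n]$ of $\Cscr^0$. The key observation is that both $X$ and $Y$ are $\omega$-chamber-type sets defined by threshold conditions (on $\omega$-values and on position), so the set differences $X-Y$ and $Y-X$ should be forced into the interleaving pattern required by $\lessdot$ or $\rhd$ in \refeq{2relat}. Concretely, I would analyze $X-Y$ and $Y-X$ by splitting according to whether an index lies in $[j..n]$ and whether its $\omega$-value is $\le k$, and show that the chamber property of $X$ together with the interval-plus-ideal structure of $Y$ prevents the ``forbidden'' crossing configuration that would violate weak separation. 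The cardinality condition in Definition~1 (the requirement $|A|\ge|B|$ when $A\rhd B$) would need to be checked in the $\rhd$ case, and I expect this to follow from counting elements of $Y$ against elements of $X$ in the relevant position range.

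\medskip
\noindent\textbf{Direction (i)$\Rightarrow$(ii).} This is the substantive direction and where I expect the main difficulty. I would argue by contraposition: suppose $X$ is not an $\omega$-chamber set, so there is a violation $(j,i)$ with $j<i$, $\omega(j)<\omega(i)$, $i\in X$, $j\notin X$. The goal is to manufacture from this violation a specific member $Y\in\Cscr^0$ that is \emph{not} weakly separated from $X$. The natural candidate is built by choosing the threshold $k$ so that $\omega(j)\le k<\omega(i)$ — so that $j$ is ``eligible'' for $I_\omega^k$ but $i$ is not — and choosing the position cutoff so that the interval $[j'..n]$ isolates the offending pair. One then checks that the resulting $Y=I_\omega^k\cap[j'..n]$ witnesses a crossing pattern incompatible with both $\lessdot$ (in either direction) and $\rhd$-with-the-cardinality-bound, i.e. $X$ and $Y$ fail every clause of Definition~1. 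The delicate points are, first, to show such $k$ and $j'$ can always be selected inside the admissible ranges $1\le j'\le\omega^{-1}(k)$ and $0\le k\le n$ defining $\Cscr^0$ (so that $Y$ genuinely belongs to $\Cscr^0$), and second, to rule out that $X$ and $Y$ accidentally satisfy $\rhd$ with the favorable cardinality inequality despite the violation.

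\medskip
The step I expect to be the main obstacle is precisely this construction of the separating witness $Y$ and the verification that none of the escape clauses of weak separation is available. Since weak separation is a disjunction of several relations (two directions of $\lessdot$, two directions of $\rhd$ with size constraints, and equality), contraposition requires excluding \emph{all} of them simultaneously, and the $\rhd$ relation with its disjoint-union decomposition $B-A=B'\sqcup B''$ with $B'\lessdot A-B\lessdot B''$ is combinatorially intricate. I would likely need to choose the violation $(j,i)$ to be extremal in some sense (for instance, with $i$ minimal or with $\omega(i)-\omega(j)$ controlled) to pin down the structure of $X-Y$ and $Y-X$ tightly enough that the interleaving forced by a single well-placed index straddling the thresholds cannot be repaired into a valid $\rhd$ pattern.
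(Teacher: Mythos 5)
Your plan for (ii)$\Rightarrow$(i) is sound, and in fact the situation is simpler than you anticipate: for an $\omega$-chamber set $X$ and any $Y=I_\omega^k\cap[j..n]$ with both differences nonempty, every $a\in Y-X$ and $b\in X-Y$ satisfy $b<a$, so the relation is always $X\lessdot Y$ and the $\rhd$ case you prepare for never arises (this is the paper's Lemma~\ref{lm:Xchamb}, a few lines). The genuine gap is in your plan for (i)$\Rightarrow$(ii). By recasting it as pure contraposition you commit to manufacturing, from a violation $(j,i)$ alone, a single $Y=I_\omega^k\cap[j'..n]$ failing every clause of Definition~1. But the violation supplies only the one crossing $j\in Y-X$, $i\in X-Y$, $j<i$, and that crossing is perfectly compatible with $Y\lessdot X$ (hence with weak separation) and with $Y\rhd X$ when $|Y|\ge|X|$. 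Which additional elements are available to defeat these clauses, and in which direction the cardinality inequality must go, depends on global features of $X$, not on the violation. A concrete illustration: take $n=4$, $\omega=2143$, $X=\{3\}$ (not in $\Cscr^0$; violations $(1,3)$ and $(2,3)$). Every ``natural'' candidate with $k$ close to $\omega(j)$ and $j'$ isolating the pair, namely $Y=\{2\}$ or $Y=\{1,2\}$, satisfies $Y\lessdot X$ and is therefore weakly separated from $X$; the witnesses that actually work are $\{1,2,4\}$ and $\{2,4\}$, which require $k=3>\omega(1),\omega(2)$, and they defeat weak separation by the opposite mechanism from the one you emphasize (elements of $Y$ straddle $X-Y$ and $|Y|>|X|$ kills the $X\rhd Y$ clause). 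Nothing in your recipe points to this choice of $k$; selecting it requires knowing where the positions with large index but small $\omega$-value sit relative to $X$ and how the cardinalities compare. This unresolved selection problem is exactly the obstacle you flag at the end, and it is the heart of the proof rather than a finishing detail.

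The paper sidesteps the difficulty by proving (i)$\Rightarrow$(ii) as a direct implication (contradiction rather than contraposition), so the full hypothesis $X\wsep\Cscr^0$ remains available throughout. It first anchors $X$ against the ideal of the same cardinality, $Y:=I_\omega^{|X|}$: Lemmas~\ref{lm:YprecX} and~\ref{lm:XrhdY} use weak separation of $X$ from adaptively chosen members of $\Cscr^0$ (including truncated sets $I_\omega^{\omega(b)}\cap[b..n]$, which lie in $\Cscr^0$ because $\omega^{-1}(\omega(b))=b$) to exclude $Y\lessdot X$, $Y\rhd X$ and $X\rhd Y$, leaving $X\lessdot Y$ with $|X|=|Y|$. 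Once this anchor is in place, a violation $(j,i)$ is killed in Lemma~\ref{lm:XprecY} by comparing $X$ with $Y':=I_\omega^{\omega(j)}$: the required cardinality inequalities ($|Y'|<|X|$ in one case, $|Y'|>|X|$ in the other) fall out automatically from $|X|=|I_\omega^{|X|}|$ --- precisely the control your local construction lacks. To complete your outline you would either have to abandon pure contraposition in favour of this two-stage argument, or reconstruct the anchoring information by hand inside the witness construction, which amounts to redoing the paper's lemmas under weaker hypotheses.
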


Due to this property, we call $\Cscr^0$ the (canonical) $\omega$-{\em checker}.
(We shall explain in Section~\ref{sec:concl} that $\Cscr^0$ is chosen to be the
spectrum of a special tiling and that there are other tilings whose spectra can
be taken as a checker in place of $\Cscr^0$ in Theorem~\ref{tm:checker}; see
Corollary~\ref{cor:chamb-til}.) It is easy to verify that: (a) $\Cscr^0$ is a
ws-collection; (b) its subcollection
   \begin{equation} \label{eq:Yomega}
   \Iscr_\omega:=\{I_\omega^0,I_\omega^1,\ldots,I_\omega^n\}
   \end{equation}
consists of $\omega$-chamber sets; and (c) any member of
$\Cscr^0-\Iscr_\omega$ is not an $\omega$-chamber set.

Relying on Theorems~B and~\ref{tm:checker}, we can prove Theorem~A as follows.
Given an $\omega$-chamber ws-collection $\Cscr\subset 2^{[n]}$, consider
$\Cscr':=\Cscr\cup \Cscr^0$. By Theorem~\ref{tm:checker}, $\Cscr'$ is a
ws-collection. Also $\Cscr\cap \Cscr^0\subseteq \Iscr_\omega$, in view of~(c)
above. Extend $\Cscr'$ to a largest ws-collection $\Dscr$, which is possible by
Theorem~B. Let $\Dscr':=(\Dscr-\Cscr_0)\cup\Iscr_\omega$. Then $\Dscr'$
includes $\Cscr$ and is an $\omega$-chamber ws-collection by
Theorem~\ref{tm:checker}. Since the cardinality of $\Dscr'$ is always the same
(as it is equal to $\binom{n+1}{2}+1-|\Cscr^0-\Iscr_\omega|$), ~$\Dscr'$ is a
largest $\omega$-chamber ws-collection, and now Theorem~A follows from the fact
that the upper size bound $\ell(\omega)+n+1$ is achieved by some
$\omega$-chamber weakly (or even strongly) separated collection.
\smallskip

The rest of this section is devoted to proving Theorem~\ref{tm:checker}. The
proof of implication (i)$\Rightarrow$(ii) falls into three lemmas. Let
$X\subset[n]$ be such that $X\not\in \Cscr^0$ and $X\wsep\Cscr^0$, and let
$k:=|X|$ and $Y:=I_\omega^k$.
  \begin{lemma} \label{lm:YprecX}
Neither $Y\lessdot X$ nor $Y\rhd X$ can take place.
  \end{lemma}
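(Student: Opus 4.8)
The plan is to argue by contradiction: assuming $Y\lessdot X$ or $Y\rhd X$, I would exhibit a single member of $\Cscr^0$ that is not weakly separated from $X$, contradicting the hypothesis $X\wsep\Cscr^0$. First I record the basic data. Since $Y=I_\omega^k$ is obtained by truncating itself with $j=1$, we have $Y\in\Cscr^0$; moreover $|Y|=k=|X|$ and $X\neq Y$ (because $X\notin\Cscr^0\ni Y$). Hence the weak separation of $X$ and $Y$ must be realized by one of $X\lessdot Y$, $Y\lessdot X$, $X\rhd Y$, $Y\rhd X$ (the two $\rhd$-clauses lose their cardinality side-conditions since $|X|=|Y|$). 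I would then observe that each of the two relations to be excluded forces the same local picture: putting $c:=\max(X-Y)$, both $Y\lessdot X$ and $Y\rhd X$ place the top of the symmetric difference into $X-Y$, so $c\in X-Y$, the sets $X$ and $Y$ agree on $[c+1..n]$, and there is some $a\in Y-X$ with $a<c$. Set $v:=\omega(c)$, so that $v>k$ and $c=\omega^{-1}(v)$.

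The engine of the proof is an elementary ``three-term crossing'' observation, which I would isolate first. If for some $B\subseteq[n]$ there are positions $p_1<p_2<p_3$ with $p_1,p_3\in B-X$ and $p_2\in X-B$, and moreover $|B|>|X|$, then $X$ and $B$ are not weakly separated: reading the symmetric difference from left to right, the subword $p_1p_2p_3$ of type $(B,X,B)$ forbids the monotone patterns of $X\lessdot B$ and of $B\lessdot X$, forbids the pattern of $B\rhd X$ (whose $B-X$ part is a single block and so admits no such alternation), and leaves only $X\rhd B$, which is killed by $|X|<|B|$. The mirror statement, with the two types interchanged and $|B|<|X|$, holds by symmetry.

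With this in hand, the main case is when some value in $\{k+1,\ldots,v-1\}$ is attained at a position to the right of $c$; call such a position $d$. Here I would take $B:=I_\omega^{v-1}$, which lies in $\Cscr^0$. Then $a\in B-X$ (as $\omega(a)\le k\le v-1$ and $a\notin X$), $c\in X-B$ (as $\omega(c)=v$), and $d\in B-X$ (as $\omega(d)\le v-1$, while $d>c$ together with the agreement of $X$ and $Y$ above $c$ and $\omega(d)>k$ gives $d\notin X$); since $a<c<d$ this is exactly a three-term crossing, and $|B|=v-1>k=|X|$, so the engine yields that $X$ and $B$ are not weakly separated --- the desired contradiction.

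The hard part is the complementary case, in which no value of $\{k+1,\ldots,v-1\}$ sits to the right of $c$ (this subsumes the degenerate situation $v=k+1$, where $I_\omega^{v-1}=Y$ and the above choice collapses). Here the crossing I need is the dual one: I would sandwich an element of $Y-X$ between two elements of $X$ and apply the mirror engine with a \emph{small} ideal $B$, so that $|B|<|X|$. In the case $Y\rhd X$ this is immediate, since the defining splitting supplies an element $b\in X-Y$ with $b<a$, whence $b<a<c$ is a dual crossing against $B:=I_\omega^{\omega(a)}$ (of type $(X,B,X)$, with $|B|=\omega(a)<k$); the case $Y\lessdot X$ is the delicate one, where the left-hand member of the sandwich must be produced from $X\cap Y$ by exploiting weak separation of $X$ with the smaller ideals $I_\omega^{k-1},I_\omega^{k-2},\ldots$ and their truncations in $\Cscr^0$, and where the residual cardinality-degeneracies (when $\omega(a)=k$, so $I_\omega^{\omega(a)}=Y$) would be ruled out by showing that $X$ must then already belong to $\Cscr^0$. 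Establishing that a dual crossing always exists in this complementary case, and selecting the witnessing small member of $\Cscr^0$, is the step I expect to be the main obstacle.
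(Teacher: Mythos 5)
Your preparatory work is sound: the reduction showing that both $Y\lessdot X$ and $Y\rhd X$ force $c:=\max(X-Y)$ to be the top of the symmetric difference, with $X,Y$ agreeing on $[c+1..n]$ and some $a\in Y-X$ below $c$, is correct; the ``three-term crossing'' engine and its mirror are correct (this is essentially the Leclerc--Zelevinsky pattern argument the paper itself invokes elsewhere); the main case via $B=I_\omega^{v-1}$ is correct; and the $Y\rhd X$ branch of the complementary case also goes through --- in fact more cleanly than you suggest, since $Y\rhd X$ forces $|X-Y|\ge 2$, hence $|Y-X|\ge 2$, so you can always choose $a\in Y-X$ with $\omega(a)<k$ and never meet the degeneracy $\omega(a)=k$.

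The genuine gap is the branch you yourself flag as ``the main obstacle'': $Y\lessdot X$ with no value of $\{k+1,\ldots,v-1\}$ occurring to the right of $c$. This is not a residual corner case; it is where the entire content of the lemma sits, and your sketch of how to handle it (produce a left sandwich element from $X\cap Y$ using smaller ideals and their truncations) is a statement of intent, not an argument. Note also that your opening plan --- always exhibit a member of $\Cscr^0$ not weakly separated from $X$ --- cannot work in this branch as stated: there exist sets $X$ with $Y\lessdot X$ and $X\wsep\Cscr^0$, namely truncated ideals themselves (e.g.\ $n=3$, $\omega(1)=3$, $\omega(2)=1$, $\omega(3)=2$, $X=\{3\}=I_\omega^2\cap[3..3]$, $Y=I_\omega^1=\{2\}\lessdot X$), so in some subcase the only available contradiction is with the hypothesis $X\notin\Cscr^0$. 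The paper closes exactly this hole with machinery your proposal lacks: it passes to the \emph{maximal} $k'$ such that $Y':=I_\omega^{k'}$ satisfies $Y'\lessdot X$ or $Y'\rhd X$; maximality pins down $X-Y'=\{a\}$ with $a=\omega^{-1}(k'+1)$; then, setting $b:=\max(Y'-X)$, the truncated ideal $Z:=I_\omega^{\omega(b)}\cap[b..n]\in\Cscr^0$ (which satisfies $Z-X=\{b\}$) serves as the crossing witness --- your mirror engine with the pattern $d<b<a$ --- to show $X$ has no element below $b$; finally $X$ is identified as $I_\omega^{k'+1}\cap[b+1..n]$, a member of $\Cscr^0$, contradicting $X\notin\Cscr^0$. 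Without constructing this witness $Z$ and carrying out that identification, the lemma remains unproved.
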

  \begin{proof}
Suppose $Y\lessdot X$ or $Y\rhd X$. Let $k'$ be the maximum number such that
either $Y'\lessdot X$ or $Y'\rhd X$, where $Y':=I_\omega^{k'}$. Then $k\le k'$
and $Y\subseteq Y'$. Define
   $$
\Delta:=Y'-X \quad \mbox{and} \quad \Delta':=\{i\in X-Y'\colon \Delta\lessdot
\{i\}\}.
   $$
Then $\Delta,\Delta'\ne\emptyset$ and $|\Delta|\ge|\Delta'|$. The maximality of
$k'$ implies that $|\Delta'|=1$ and that the unique element of $\Delta'$, say,
$a$, is exactly $\omega^{-1}(k'+1)$ (in all other cases either $k'+1$ fits as
well, or $X$ is not weakly separated from $I_\omega^{k'+1}$).

Let $b$ be the \emph{maximal} element in $\Delta$. Then $a>b$ and
the element $\tilde k:=\omega(b)$ is at most $k'$. We assert that
there is no $d\in X$ such that $d<b$. To see this, consider the
sets $I_\omega^{\tilde k}$ and $Z:=I_\omega^{\tilde k}\cap
[b..n]$. Then $Z\in\Cscr^0$ and $Z\subseteq I_\omega^{\tilde
k}\subseteq Y'$. Therefore, $a\in X-Z$. Also $b\in Z-X$. Moreover,
$Z-X=\{b\}$, by the maximality of $b$. Now if $X$ contains an
element $d<b$, then we have $|X|>|Z|$ (in view of $a,d\in X-Z$ and
$|Z-X|=1$) and $Z\rhd X$ (in view of $d<b<a$), which contradicts
$X\wsep\Cscr^0$.

Thus, all elements of $X$ are greater than $b$. This and $X-Y'=\{a\}$ imply
that the set $U:=Y'\cap[b..n]$ satisfies $X-U=\{a\}$ and $U-X=\{b\}$. Then $X$
coincides with the set $I_\omega^{k'+1}\cap[b+1..n]$. But the latter set
belongs to $\Cscr^0$ (since $\omega^{-1}(k'+1)=a>b$). So $X$ is a member of
$\Cscr^0$; a contradiction.
  \end{proof}

  \begin{lemma} \label{lm:XrhdY}
$X\rhd Y$ cannot take place.
  \end{lemma}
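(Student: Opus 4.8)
The plan is to assume $X\rhd Y$ and derive a contradiction with the hypothesis $X\wsep\Cscr^0$ by producing a single member of $\Cscr^0$ from which $X$ fails to be weakly separated. The natural candidate is the next ideal $I_\omega^{k+1}$: it lies in $\Cscr^0$ (indeed $I_\omega^{k+1}\in\Iscr_\omega\subseteq\Cscr^0$, being $I_\omega^{k+1}\cap[1..n]$), and it differs from $Y=I_\omega^k$ only in the single position $p:=\omega^{-1}(k+1)$. Note first that $k\le n-1$, since $k=n$ would force $X=[n]=Y$, contradicting $X\rhd Y$; so $I_\omega^{k+1}$ is well defined.

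First I would record the structure forced by $X\rhd Y$. The set $Y-X$ splits as $Y'\sqcup Y''$ with $Y'\lessdot(X-Y)\lessdot Y''$ and both parts nonempty, so $Y'$ lies entirely below, and $Y''$ entirely above, the block $M:=X-Y$ in the linear order on $[n]$. Since $|X|=|Y|=k$, we have $|M|=|Y-X|=|Y'|+|Y''|\ge 2$.

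Then I would split into two cases according to whether the new position $p$ belongs to $X$. If $p\in X$, then (as $p\notin Y$) we have $p\in M$, and passing from $Y$ to $I_\omega^{k+1}$ deletes $p$ from the middle block: $X-I_\omega^{k+1}=M-p$, which is nonempty because $|M|\ge 2$, while $I_\omega^{k+1}-X=Y'\sqcup Y''$ is unchanged. The inclusions $Y'\lessdot(M-p)\lessdot Y''$ persist, so $X\rhd I_\omega^{k+1}$. If $p\notin X$, then $I_\omega^{k+1}-X=Y'\sqcup\{p\}\sqcup Y''$ while $X-I_\omega^{k+1}=M$; when $p<\min M$ or $p>\max M$ the element $p$ is absorbed into $Y'$ or into $Y''$ and again $X\rhd I_\omega^{k+1}$, whereas when $p$ falls strictly inside the range of $M$ one checks that none of $\lessdot$ or $\rhd$ holds in either direction, so $X$ and $I_\omega^{k+1}$ are not weakly separated at all. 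In every subcase $X$ fails to be weakly separated from $I_\omega^{k+1}\in\Cscr^0$: in the interleaving subcase this is immediate, and in the subcases yielding $X\rhd I_\omega^{k+1}$ it follows because weak separation under $\rhd$ would require $|X|\ge|I_\omega^{k+1}|=k+1$, contradicting $|X|=k$. This contradiction proves the lemma.

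The one point needing care—and the only place where the argument is not completely mechanical—is the interleaving subcase, where I must verify that an ``inside'' $p$ genuinely destroys weak separation. The delicate step is ruling out $I_\omega^{k+1}\rhd X$: this uses that $I_\omega^{k+1}-X$ contains elements of $Y'$ below $M$ and of $Y''$ above $M$, which prevents $M=X-I_\omega^{k+1}$ from being written as a two-part split $\lessdot(I_\omega^{k+1}-X)\lessdot$. I would remark that this proof uses only $X\wsep\Cscr^0$, the equality $|X|=|Y|$, and the hypothesis $X\rhd Y$; it is independent of Lemma~\ref{lm:YprecX}, and together with that lemma it leaves $X\lessdot Y$ as the only surviving relation between $X$ and $Y$, which is the starting point for the remaining part of the implication (i)$\Rightarrow$(ii).
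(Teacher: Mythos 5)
Your proof is correct, and it reaches the contradiction by a genuinely different route than the paper. The paper also exhibits a single larger ideal from which $X$ fails to be weakly separated, but it selects that ideal by an extremal choice rather than a one-step move: it takes the \emph{maximal} $k'$ such that $Y':=I_\omega^{k'}$ satisfies $X-Y'\ne\emptyset$. Maximality forces $|X-Y'|=1$, and a singleton ``middle block'' can never be straddled by elements of $Y'-X$, so the relation $X\rhd Y$ passes to $X\rhd Y'$ with no positional analysis whatsoever; then $|Y'-X|\ge|Y-X|\ge 2$ gives $|X|<|Y'|$, contradicting $X\wsep Y'$ in one stroke. Your choice of the immediate successor $I_\omega^{k+1}$ cannot avoid the interleaving possibility, which is exactly why you need the three subcases, including the one where \emph{no} relation at all holds between $X$ and $I_\omega^{k+1}$; your handling of that subcase (ruling out $I_\omega^{k+1}\rhd X$ because $I_\omega^{k+1}-X$ contains elements of $Y'$ below $M$ and of $Y''$ above $M$) is the genuinely delicate point, and you carry it out correctly. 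Note also that both your argument and the paper's rely on the same implicit fact that once $X\rhd B$ holds, the relations $X\lessdot B$, $B\lessdot X$, $B\rhd X$ and $X=B$ are all excluded, so weak separation can only hold via the clause $|X|\ge|B|$; this is used silently in the paper as well, so your level of rigor matches it. In sum: your version buys locality (only the next ideal is inspected, and the failure is traced to the exact position of $p=\omega^{-1}(k+1)$), while the paper's extremal choice buys brevity (a single case, three lines); both use only the nested ideals $I_\omega^j\in\Cscr^0$ and the equality $|X|=|Y|$, so both are equally faithful to the hypotheses of the lemma.
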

  \begin{proof}
Suppose $X\rhd Y$. Take the maximal $k'$ such that the set $Y':=I_\omega^{k'}$
satisfies $X-Y'\ne \emptyset$. Then $k'\ge k$ and $|X-Y'|=1$. Since $k\le k'$
implies $Y\subseteq Y'$, we have $X\rhd Y'$. Also $|Y'-X|\ge |Y-X|\ge 2$. Then
$|X|<|Y'|$, contradicting $X\wsep Y'$.
  \end{proof}

In view of $X\wsep Y$, Lemmas~\ref{lm:YprecX} and~\ref{lm:XrhdY} imply that
only the case $X\lessdot Y$ is possible.
  \begin{lemma} \label{lm:XprecY}
Let $X\lessdot Y$. Then $X$ is an $\omega$-chamber set.
  \end{lemma}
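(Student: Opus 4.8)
The plan is to argue by contradiction after reformulating the goal. Recall that $X$ is an $\omega$-chamber set precisely when it is down-closed under the relation ``$j<i$ and $\omega(j)<\omega(i)$''; so if $X$ were \emph{not} $\omega$-chamber there would be a \emph{violating pair}, i.e.\ indices $i\in X$ and $j\in[n]-X$ with $j<i$ and $\omega(j)<\omega(i)$. First I would record the structural consequences of the standing hypothesis $X\lessdot Y$ with $Y=I_\omega^k$ and $k=|X|=|Y|$: writing $X-Y=\{a_1<\cdots<a_m\}$ and $Y-X=\{b_1<\cdots<b_m\}$, every $a_t$ satisfies $\omega(a_t)>k$, every $b_s$ satisfies $\omega(b_s)\le k$, and $a_m<b_1$; moreover $m\ge 1$, since $m=0$ would force $X=Y\in\Cscr^0$, contrary to $X\notin\Cscr^0$. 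In particular $X-Y\subseteq[1..a_m]$ and $Y-X\subseteq[b_1..n]$.

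Next I would split a violating pair according to the position of $\omega(i),\omega(j)$ relative to $k$. The \emph{mixed} case $\omega(j)\le k<\omega(i)$ is eliminated at once: then $i\in X-Y$ and $j\in Y-X$, so $X\lessdot Y$ gives $i<j$, contradicting $j<i$. Since $\omega(j)<\omega(i)$, the only surviving alternatives are $\omega(j)<\omega(i)\le k$ (Case~1, where $\omega(j)\le k-1$) and $k<\omega(j)<\omega(i)$ (Case~4, where $\omega(j)\ge k+1$).

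The crux is a single uniform witness: the ideal $Z:=I_\omega^{\omega(j)}\in\Iscr_\omega\subseteq\Cscr^0$, of cardinality $\omega(j)$. Since $\omega(j)\le\omega(j)$ and $j\notin X$ we have $j\in Z-X$, while $\omega(i)>\omega(j)$ and $i\in X$ give $i\in X-Z$; as $j<i$, neither $X\lessdot Z$ nor $Z\lessdot X$ can hold, and $X\ne Z$ because $|Z|=\omega(j)\ne k=|X|$. Thus weak separation of $X$ and $Z$ would have to come from one of the two $\rhd$-relations, and I would rule out both, contradicting $X\wsep\Cscr^0$. One of them dies by cardinality: in Case~1, $|Z|<|X|$ excludes $Z\rhd X$; in Case~4, $|Z|>|X|$ excludes $X\rhd Z$. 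The remaining relation dies by an ``empty-arm'' argument driven by $X\lessdot Y$. In Case~1, $X\rhd Z$ would require an element of $Z-X$ strictly left of all of $X-Z$; but every $z\in Z-X$ has $\omega(z)\le\omega(j)\le k$, hence $z\in Y-X$ and $z\ge b_1$, whereas $a_m\in X-Y\subseteq X-Z$ satisfies $a_m<b_1$, so the leftmost element of the symmetric difference lies in $X-Z$ and the required arm is empty. Case~4 is exactly dual: $Z\rhd X$ would require an element of $X-Z$ strictly right of all of $Z-X$, but every $x\in X-Z$ has $\omega(x)>\omega(j)>k$, hence $x\in X-Y$ and $x\le a_m$, while $b_1\in Y-X\subseteq Z-X$ satisfies $b_1>a_m$, so the rightmost element of the symmetric difference lies in $Z-X$ and that arm is empty. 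Either way $X$ is not weakly separated from $Z$, the desired contradiction.

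I expect the main obstacle to be organizing this weak-separation bookkeeping cleanly: one must check that $j\in Z-X$ together with $i\in X-Z$ and $j<i$ genuinely precludes both $\lessdot$-relations, and, crucially, that the relevant part $B'$ (resp.\ $B''$) of the sandwiched $\rhd$-configuration is empty. The latter is precisely where $X\lessdot Y$ does the work, via the confinement $X-Y\subseteq[1..a_m]$, $Y-X\subseteq[b_1..n]$ with $a_m<b_1$, combined with the $\omega$-value bounds on $X-Y$ and $Y-X$. Once the mixed case is excluded and Cases~1 and~4 are both dispatched, no violating pair can exist, so $X$ is an $\omega$-chamber set, as required.
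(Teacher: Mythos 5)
Your proposal is in substance the paper's own proof: the same witness ideal $Z=I_\omega^{\omega(j)}\in\Iscr_\omega\subseteq\Cscr^0$, the same case split on the position of $\omega(i),\omega(j)$ relative to $k$ (with the mixed case killed directly by $X\lessdot Y$), the same cardinality argument against one $\rhd$-relation, and the same use of an extreme element of $X-Y$, resp. $Y-X$ (your $a_m$ is the paper's $d$ in its Case~1, your $b_1$ is the paper's $a$ in its Case~2). There is, however, one false assertion: from $j\in Z-X$, $i\in X-Z$ and $j<i$ you conclude that ``neither $X\lessdot Z$ nor $Z\lessdot X$ can hold.'' This crossing pair excludes only $X\lessdot Z$. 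It is perfectly compatible with $Z\lessdot X$, which asks precisely that elements of $Z-X$ sit below elements of $X-Z$, i.e.\ the inequality $j<i$ that you have; the relation $\lessdot$ is not symmetric, so a single pair cannot kill both directions. As written, weak separation of $X$ and $Z$ could still be certified by $Z\lessdot X$, and your contradiction would evaporate.

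The gap is real but closes immediately with facts you already establish. In your Case~1, every $z\in Z-X$ satisfies $\omega(z)\le\omega(j)\le k$, hence $z\in Y-X$ and $z\ge b_1>a_m$, while $a_m\in X-Z$; so some element of $Z-X$ exceeds some element of $X-Z$ and $Z\lessdot X$ fails --- this is the same confinement you use to empty the arm $B'$. Dually, in your Case~4, every $x\in X-Z$ satisfies $x\le a_m<b_1$ with $b_1\in Z-X$, so $Z\lessdot X$ fails again. Note how the paper sidesteps this bookkeeping issue: in each case it exhibits a three-term sandwich ($d<j<i$ with $d,i\in X-Y'$ and $j\in Y'-X$, resp.\ $j<i<a$ with $j,a\in Y'-X$ and $i\in X-Y'$) whose middle element lies on the opposite side from the two outer ones; such a sandwich excludes both $\lessdot$-relations and one $\rhd$-relation in a single stroke, leaving only the relation that dies by cardinality. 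With the repair above, your argument is complete and correct, and essentially identical to the paper's.
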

  \begin{proof}
Suppose that there exist $i\in X$ and $j\not\in X$ such that $j<i$ and
$\omega(j)<\omega(i)$. Consider possible cases. \smallskip

\emph{Case 1}: $i\in Y$. Then $\omega(j)<\omega(i)$ implies that $j\in Y$. Take
$d\in X-Y$ (existing since $|X|=|Y|$). Then $d<j$ (since $X\lessdot Y$ and
$j\in Y-X$). Let $Y':=I_\omega^{\omega(j)}$. We have $j\in Y'$, $i\not\in Y'$
and $d\not\in Y'$. This together with $X\wsep Y'$ and $d<j<i$ implies $Y'\rhd
X$. But $|X|=|Y|>|Y'|$ (in view of $\omega(j)<\omega(i)\le k$); a
contradiction.
\smallskip

\emph{Case 2}: $i,j\not\in Y$. Then $\omega(i),\omega(j)>k$. Take $a\in Y-X$.
Since $X\lessdot Y$, we have $a>i$. Also $\omega(a)\le k$. Let
$Y':=I_\omega^{\omega(j)}$. Then $|Y'|>|Y|$ (in view of $\omega(j)>k$). Also
$a,j\in Y'-X$ and $i\in X-Y'$. Therefore, $X\rhd Y'$, contradicting
$|X|=|Y|<|Y'|$.
\smallskip

Finally, the case with $i\not\in Y$ and $j\in Y$ is impossible since $i>j$ and
$X\lessdot Y$.
  \end{proof}

Thus, (i)$\Rightarrow$(ii) in Theorem~\ref{tm:checker} is proven. Now we prove
the other direction.
    \begin{lemma} \label{lm:Xchamb}
Let $X\subset[n]$ be an $\omega$-chamber set. Then $X\wsep\Cscr^0$.
  \end{lemma}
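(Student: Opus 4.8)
The plan is to fix an arbitrary member $Z=I_\omega^k\cap[j..n]$ of $\Cscr^0$ (with $0\le k\le n$ and $1\le j\le\omega^{-1}(k)$) and to show that $X$ and $Z$ are weakly separated; I expect that in fact the stronger conclusion $X\lessdot Z$ (or a trivial nesting) will always hold, so the relation $\rhd$ never enters. Thinking of each $i\in[n]$ as a point with ``position'' $i$ and ``value'' $\omega(i)$, the set $Z$ consists exactly of those $i$ with $i\ge j$ and $\omega(i)\le k$. The idea is to locate all of $X-Z$ strictly to the left (in position) of all of $Z-X$; by the definition of $\lessdot$ in \refeq{2relat}(i) this immediately yields weak separation.

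First I would split $X-Z$ according to position: set $P:=\{i\in X\colon i<j\}$ and $Q:=\{i\in X\colon i\ge j,\ \omega(i)>k\}$, so that $X-Z=P\sqcup Q$ (an element of $X$ lies outside $Z$ precisely when its position is below $j$ or its value exceeds $k$). Since every element of $Z-X\subseteq Z$ has position $\ge j$, each element of $P$, having position $<j$, is automatically smaller than each element of $Z-X$. This disposes of $P$ without using the chamber hypothesis.

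The one real point is to place $Q$ to the left of $Z-X$, and here the $\omega$-chamber property of $X$ is used through its contrapositive. Take $q\in Q$ and $b\in Z-X$; then $b\notin X$, $q\in X$, and $\omega(b)\le k<\omega(q)$, so $\omega(b)<\omega(q)$. Were $b<q$, the chamber condition applied to $q\in X$ with $b<q$ and $\omega(b)<\omega(q)$ would force $b\in X$, a contradiction; hence $q<b$. Thus every element of $Q$, like every element of $P$, precedes every element of $Z-X$, so $X-Z$ lies entirely to the left of $Z-X$. If $Z-X\ne\emptyset$ this is exactly $X\lessdot Z$; otherwise $Z\subseteq X$, giving $Z\lessdot X$ when $Z\ne X$ and $Z=X$ otherwise. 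In every case $X\wsep Z$, and since $Z$ was an arbitrary member of $\Cscr^0$ we conclude $X\wsep\Cscr^0$.

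I would emphasize that the whole argument is a short, essentially case-free computation; the only subtlety — and the step I would single out as the crux — is the contrapositive use of the chamber axiom to control $Q$, which is precisely what makes the ``surrounding'' relation $\rhd$ unnecessary and keeps the separation at the level of $\lessdot$.
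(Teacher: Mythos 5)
Your proof is correct and takes essentially the same approach as the paper's: both arguments show that every element of $X-Z$ strictly precedes every element of $Z-X$, handling an element of $X-Z$ by its position when it lies to the left of $j$, and by the contrapositive of the $\omega$-chamber condition when its $\omega$-value exceeds $k$. The only cosmetic difference is that the paper assumes at the outset that both difference sets are nonempty, whereas you dispose of the degenerate nested/equal cases explicitly at the end.
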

  \begin{proof}
Consider an arbitrary set $Y=I_\omega^k\cap[j..n]$ in $\Cscr^0$ (where $j\le
\omega^{-1}(k)$). One may assume that both $X-Y$ and $Y-X$ are nonempty. Let
$a\in Y-X$ and $b\in X-Y$. We assert that $a>b$ (whence $X\lessdot Y$ follows).

Indeed, $a\in Y$ implies $\omega(a)\le k$. If $b\not\in I_\omega^k$, then
$\omega(b)>k\ge\omega(a)$. In case $a<b$ we would have $a\in X$, by the
$\omega$-chamberness of $X$. Therefore, $a>b$, as required.

Now suppose $b\in I_\omega^k$. Then $b\in I_\omega^k-[j..n]$, and therefore,
$b<j$. Since $j\le a$, we again obtain $a>b$.
  \end{proof}

This completes the proof of Theorem~\ref{tm:checker}, reducing
Theorem~A to Theorem~B.
\medskip


\section{Generalized tilings and their properties}  \label{sec:tiling}

As mentioned in the Introduction, the proof of Theorem~B will essentially rely
on results on generalized tilings from~\cite{DKK-09}. This section starts with
definitions of such objects and their spectra. Then we review properties of
generalized tilings that will be important for us later:
Subsection~\ref{ssec:Tprop} describes rather easy consequences from the
defining axioms and Subsection~\ref{ssec:s-c-e} is devoted to less evident
properties.

\subsection{Generalized tilings}  \label{ssec:tiling}

Tiling diagrams that we deal with live within a zonogon, which is defined as
follows.

In the upper half-plane $\Rset\times \Rset_+$, take $n$ non-colinear vectors
$\xi_1,\ldots,\xi_n$ so that:
  \begin{numitem1} (i) $\xi_1,\ldots,\xi_n$ follow in this order clockwise around
$(0,0)$, and \\
 (ii) all integer combinations of these vectors are different.
  \label{eq:xi}
  \end{numitem1}
Then the set
  $$
Z=Z_n:=\{\lambda_1\xi_1+\ldots+ \lambda_n\xi_n\colon \lambda_i\in\Rset,\;
0\le\lambda_i\le 1,\; i=1,\ldots,n\}
  $$
is a $2n$-gon. Moreover, $Z$ is a {\em zonogon}, as it is the sum of $n$
line-segments $\{\lambda\xi_i\colon 1\le \lambda\le 1\}$, $i=1,\ldots,n$. Also
it is the image by a linear projection $\pi$ of the solid cube $conv(2^{[n]})$
into the plane $\Rset^2$, defined by $\pi(x):=x_1\xi_1+\ldots +x_n\xi_n.$ The
boundary $bd(Z)$ of $Z$ consists of two parts: the {\em left boundary} $lbd(Z)$
formed by the points (vertices) $z^\ell_i:=\xi_1+\ldots+\xi_i$ ($i=0,\ldots,n$)
connected by the line-segments $z^\ell_{i-1}z^\ell_i:=z^\ell_{i-1}+\{\lambda
\xi_i\colon 0\le\lambda\le 1\}$, and the {\em right boundary} $rbd(Z)$ formed
by the points $z^r_i:=\xi_{i+1}+\ldots+\xi_n$ ($i=0,\ldots,n$) connected by the
line-segments $z^r_iz^r_{i-1}$. So $z^\ell_0=z^r_n$ is the minimal vertex of
$Z$, denoted as $z_0$, and $z^\ell_n=z^r_0$ is the maximal vertex, denoted as
$z_n$. We direct each segment $z^\ell_{i-1}z^\ell_i$ from $z^\ell_{i-1}$ to
$z^\ell_i$ and direct each segment $z^r_iz^r_{i-1}$ from $z^r_i$ to
$z^r_{i-1}$.

When it is not confusing, a subset $X\subseteq [n]$ is identified with the
corresponding vertex of the $n$-cube and with the point $\sum_{i\in X}\xi_i$ in
the zonogon $Z$ (and we will usually use capital letters when we are going to
emphasize that a vertex (or a point) is considered as a set). Due
to~\refeq{xi}(ii), all such points in $Z$ are different.

By a {\em (pure) tiling diagram} we mean a subdivision $T$ of $Z$ into
\emph{tiles}, each being a parallelogram of the form
$X+\{\lambda\xi_i+\lambda'\xi_j\colon 0\le \lambda,\lambda'\le 1\}$ for some
$i<j$ and some subset $X\subset[n]$ (regarded as a point in $Z$); so the tiles
are pairwise non-overlapping (have no common interior points) and their union
is $Z$. A tile $\tau$ determined by $X,i,j$ is called an $ij$-{\em tile} at $X$
and denoted by $\tau(X;i,j)$. According to a natural visualization of $\tau$,
its vertices $X,Xi,Xj,Xij$ are called the {\em bottom, left, right, top}
vertices of $\tau$ and denoted by $b(\tau)$, $\ell(\tau)$, $r(\tau)$,
$t(\tau)$, respectively. The edge from $b(\tau)$ to $\ell(\tau)$ is denoted by
$b\ell(\tau)$, and the other three edges of $\tau$ are denoted as
$br(\tau),\ell t(\tau),rt(\tau)$ in a similar way.

In fact, it is not important for our purposes which set of base vectors $\xi_i$
is chosen, subject to~\refeq{xi}. (In works on a similar subsect, it is most
often when the $\xi_i$ are assumed to have equal Euclidean norms; in this case
each tile forms a rhombus and $T$ is usually referred to as a \emph{rhombus
tiling}.) However, to simplify technical details and visualization, it will be
convenient for us to assume that these vectors always have {\em unit height},
i.e. each $\xi_i$ is of the form $(a_i,1)$. Then each tile becomes a
parallelogram of height 2. Accordingly, we say that: a point (subset)
$Y\subseteq[n]$ is of {\em height} $|Y|$; the set of vertices of tiles in $T$
having height $h$ forms $h$-th {\em level}; and a point $Y$ {\em lies on the
right} from a point $Y'$ if $|Y|=|Y'|$ and $\sum_{i\in Y}\xi_i\ge \sum_{i\in
Y'}\xi_i$.

In a {\em generalized tiling}, or a \emph{g-tiling}, some tiles may overlap. It
is a collection $T$ of tiles $\tau(X;i,j)$ which is partitioned into two
subcollections $T^w$ and $T^b$, of {\em white} and {\em black\/} tiles,
respectively, obeying axioms (T1)--(T4) below. When $T^b=\emptyset$, ~$T$
becomes a pure tiling.

We associate to $T$ the directed graph $G_T=(V_T,E_T)$ whose vertices and edges
are, respectively, the points and line-segments occurring as vertices and sides
in the tiles of $T$ (not counting multiplicities). An edge connecting vertices
$X$ and $Xi$ is directed from the former to the latter; such an edge (parallel
to $\xi_i$) is called an edge with \emph{label} $i$, or an $i$-\emph{edge}
(\cite{DKK-09} uses the term ``color'' rather than ``label''). For a vertex
$v\in V_T$, the set of edges incident with $v$ is denoted by $E_T(v)$, and the
set of tiles having a vertex at $v$ is denoted by $F_T(v)$.
  \begin{itemize}
\item[(T1)] Each boundary edge of $Z$ belongs to
exactly one tile. Each edge in $E_T$ not contained in $bd(Z)$ belongs to
exactly two tiles. All tiles in $T$ are different, in the sense that no two
coincide in the plane.
  \end{itemize}
  \begin{itemize}
\item[(T2)] Any two white tiles having a common edge do not overlap, i.e.
they have no common interior point. If a white tile and a black tile
share an edge, then these tiles do overlap. No two black tiles share an
edge.
  \end{itemize}
See the picture; here all edges are directed up and the black tiles are drawn
in bold.
 \begin{center}
  \unitlength=1mm
  \begin{picture}(120,18)
  \put(0,6){\line(0,1){6}}
  \put(6,0){\line(0,1){6}}
  \put(15,6){\line(0,1){6}}
  \put(0,6){\line(1,-1){6}}
  \put(0,12){\line(1,-1){6}}
  \put(6,0){\line(3,2){9}}
  \put(6,6){\line(3,2){9}}
  \put(30,6){\line(0,1){6}}
  \put(36,0){\line(0,1){6}}
  \put(30,6){\line(1,-1){6}}
  \put(30,12){\line(1,-1){6}}
  \put(39,18){\line(1,-1){6}}
  \put(30,12){\line(3,2){9}}
  \put(36,6){\line(3,2){9}}
  \put(72,6){\line(0,1){6}}
  \put(78,0){\line(0,1){6}}
  \put(72,6){\line(1,-1){6}}
{\thicklines
  \put(60,6){\line(2,1){12}}
  \put(66,0){\line(2,1){12}}
  \put(60,6){\line(1,-1){6}}
  \put(72,12){\line(1,-1){6}}
}
  \put(90,18){\line(1,-3){2}}
  \put(96,12){\line(1,-3){2}}
  \put(90,18){\line(1,-1){6}}
{\thicklines
  \put(92,12){\line(2,1){12}}
  \put(98,6){\line(2,1){12}}
  \put(92,12){\line(1,-1){6}}
  \put(104,18){\line(1,-1){6}}
}
  \end{picture}
   \end{center}

  \begin{itemize}
\item[(T3)] Let $\tau$ be a black tile. None of $b(\tau),t(\tau)$ is a vertex of
another black tile. All edges in $E_T(b(\tau))$ {\em leave} $b(\tau)$, i.e.
they are directed from $b(\tau)$. All edges in $E_T(t(\tau))$ {\em enter}
$t(\tau)$, i.e. they are directed to $t(\tau)$.
  \end{itemize}

We refer to a vertex $v\in V_T$ as {\em terminal} if $v$ is the bottom or top
vertex of some black tile. A nonterminal vertex $v$ is called {\em ordinary} if
all tiles in $F_T(v)$ are white, and {\em mixed} otherwise (i.e. $v$ is the
left or right vertex of some black tile). Note that a mixed vertex may belong,
as the left or right vertex, to several black tiles.

Each tile $\tau\in T$ corresponds to a square in the solid cube
$conv(2^{[n]})$, denoted by $\sigma(\tau)$: if $\tau=\tau(X;i,j)$ then
$\sigma(\tau)$ is the convex hull of the points $X,Xi,Xj,Xij$ in the
cube (so $\pi(\sigma(\tau))=\tau$). (T1) implies that the interiors of
these squares are pairwise disjoint and that $\cup(\sigma(\tau)\colon
\tau\in T)$ forms a 2-dimensional surface, denoted by $D_T$, whose
boundary is the preimage by $\pi$ of the boundary of $Z$. The last
axiom is:
  \begin{itemize}
\item[(T4)] $D_T$ is a disc, in the sense that it is homeomorphic to
 $\{x\in\Rset^2\colon x_1^2+x_2^2\le 1\}$.
  \end{itemize}

The {\em reversed} g-tiling $T^{rev}$ of a g-tiling $T$ is formed by replacing
each tile $\tau(X;i,j)$ of $T$ by the tile $\tau([n]-Xij;i,j)$ (or, roughly
speaking, by changing the orientation of all edges in $E_T$, in particular, in
$bd(Z)$). Clearly (T1)--(T4) remain valid for $T^{rev}$.

The {\em spectrum} of a g-tiling $T$ is the collection $\mathfrak{S}_T$
of (the subsets of $[n]$ represented by) {\em nonterminal} vertices in
$G_T$. Figure~\ref{fig:GT} illustrates an example of g-tilings; here
the unique black tile is drawn by thick lines and the terminal vertices
are indicated by black rhombi.
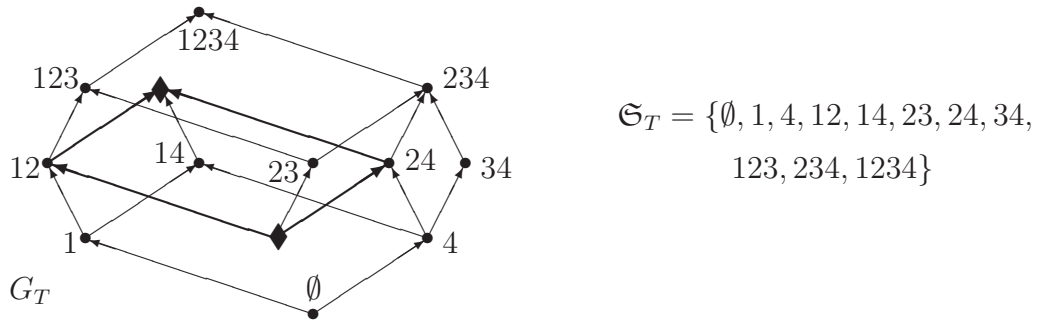
\begin{figure}[htb]
 \begin{center}
  \unitlength=1mm
  \begin{picture}(130,40)
   \put(40,0){\begin{picture}(50,40)
  \put(0,0){\vector(-3,1){29.5}}
  \put(-30,10){\vector(-1,2){4.7}}
  \put(-35,20){\vector(1,2){4.7}}
  \put(-30,30){\vector(3,2){14.7}}
  \put(15,30){\vector(-3,1){29.5}}
  \put(20,20){\vector(-1,2){4.7}}
  \put(15,10){\vector(1,2){4.7}}
  \put(0,0){\vector(3,2){14.7}}
  \put(0,0){\circle*{1.5}}
  \put(-30,10){\circle*{1.5}}
  \put(-6,9){$\blacklozenge$}
  \put(15,10){\circle*{1.5}}
  \put(-35,20){\circle*{1.5}}
  \put(-15,20){\circle*{1.5}}
  \put(0,20){\circle*{1.5}}
  \put(10,20){\circle*{1.5}}
  \put(20,20){\circle*{1.5}}
  \put(-30,30){\circle*{1.5}}
  \put(-21.5,28.5){$\blacklozenge$}
  \put(15,30){\circle*{1.5}}
  \put(-15,40){\circle*{1.5}}
  \put(0,20){\vector(-3,1){29.5}}
  \put(15,10){\vector(-3,1){29.5}}
  \put(-15,20){\vector(-1,2){4.5}}
  \put(15,10){\vector(-1,2){4.7}}
  \put(-5,10){\vector(1,2){4.7}}
  \put(10,20){\vector(1,2){4.7}}
  \put(-30,10){\vector(3,2){14.7}}
  \put(0,20){\vector(3,2){14.7}}
\thicklines{
  \put(-5,10){\vector(-3,1){29.5}}
  \put(10,20){\vector(-3,1){29.5}}
  \put(-35,20){\vector(3,2){14.7}}
  \put(-5,10){\vector(3,2){14.7}}
}
  \put(-1,2){$\emptyset$}
  \put(-33,8){1}
  \put(17,8){4}
  \put(-40,18){12}
  \put(-21,20){14}
  \put(-6,17.5){23}
  \put(12,19){24}
  \put(22,18){34}
  \put(-37,30){123}
  \put(17,30){234}
  \put(-18,35){1234}
  \put(-40,2){$G_T$}
    \end{picture}}
%
   \put(80,0){\begin{picture}(50,30)
  \put(0,25){$\mathfrak{S}_T=\{\emptyset,1,4,12,14,23,24,34,$}
  \put(15,18){$123,234,1234\}$}
    \end{picture}}
  \end{picture}
   \end{center}
\caption{A g-tiling instance for $n=4$} \label{fig:GT}
  \end{figure}

The following result on g-tilings is of most importance for us.
 \begin{theorem} \label{tm:til-ws}
{\rm \cite{DKK-09}} The spectrum $\mathfrak{S}_T$ of any generalized tiling $T$
forms a largest ws-collection. Conversely, for any largest ws-collection
$\Cscr\subseteq 2^{[n]}$, there exists a generalized tiling $T$ on $Z_n$ such
that $\mathfrak{S}_T=\Cscr$. (Moreover, such a $T$ is unique and there is an
efficient procedure to construct $T$ from $\Cscr$.)
 \end{theorem}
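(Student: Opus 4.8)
The plan is to prove the two directions separately, handling the forward direction (that every spectrum $\mathfrak{S}_T$ is a largest ws-collection) first and then bootstrapping the converse, together with uniqueness, from the structure uncovered along the way. Throughout I would lean on the disc property (T4) and the local axioms (T1)--(T3), organizing everything around the height grading $X\mapsto|X|$ coming from the unit-height convention: each tile spans three consecutive levels, so a g-tiling can be read off level by level, and a set $X\in\mathfrak{S}_T$ sits at level $|X|$.

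For the forward direction I would split the claim into (a) $\mathfrak{S}_T$ is weakly separated and (b) $|\mathfrak{S}_T|=\binom{n+1}{2}+1$; since the excerpt defines ``largest'' by exactly this cardinality, (a) and (b) together give the assertion. For (a), the task is to show that any two nonterminal vertices $X,Y\in V_T$ satisfy $X\wsep Y$. I would first record the local picture forced by (T1)--(T3) at ordinary and mixed vertices — the incident edges $E_T(v)$ occur in label-monotone cyclic order, and each black tile lies ``inside'' the two white tiles it overlaps — and then pass to a global object: for each label $i$, the $i$-edges assemble into a strip (a curve crossing $Z$ from $lbd(Z)$ to $rbd(Z)$), and the relative position of $X$ and $Y$ on the disc $D_T$ is dictated by how the strips separating them cross one another. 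The point is that $\lessdot$ and $\rhd$ are precisely the two crossing patterns such a strip-system can realize without violating planarity, so reading off the pattern for the pair $(X,Y)$ yields either $X\lessdot Y$, or $Y\lessdot X$, or one of the $\rhd$ relations with the correct size inequality, i.e. $X\wsep Y$. For (b), I would apply Euler's formula to the square complex $\{\sigma(\tau)\colon\tau\in T\}$ on the disc $D_T$; the pure-tiling count ($\binom{n}{2}$ tiles, whence $\binom{n+1}{2}+1$ vertices) is standard, and I would argue that introducing black tiles and deleting the resulting terminal vertices leaves the count of nonterminal vertices unchanged.

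For the converse I would reconstruct $T$ directly from a largest ws-collection $\Cscr$, which simultaneously yields existence, uniqueness, and an efficient algorithm. The vertex set of $G_T$ must be exactly $\Cscr$, so only the edges and tiles are at issue. I would recover the edges by joining $X$ to $Xi$ whenever both lie in $\Cscr$ and no member of $\Cscr$ is forced strictly between them, and recover each tile $\tau(X;i,j)$ from the presence of its four corners together with the local weak-separation pattern that distinguishes a genuine tile from an accidental square; the white/black label and the overlaps are then determined by which corners are terminal (absent from $\Cscr$). Checking that this recipe delivers tiles obeying (T1)--(T4) is exactly where the weak-separation hypothesis on $\Cscr$ does its work, and since every choice in the reconstruction is forced by $\Cscr$, uniqueness is immediate and each test is local, hence efficient.

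I expect part (a) of the forward direction to be the main obstacle: proving that planarity of $D_T$ together with the black-tile axioms forces weak separation of \emph{every} pair of nonterminal vertices, including pairs lying far apart and pairs straddling several black tiles. Because $\lessdot$ and $\rhd$ are not transitive (as the Introduction notes, $13\lessdot23\lessdot24$ but $13\not\lessdot24$), a naive induction along a path from $X$ to $Y$ cannot certify the relation; I would instead need a genuinely global invariant — most plausibly the crossing pattern of the label-strips — to read off the correct relation directly. Reconciling this clean pure-tiling intuition with the overlaps introduced by black tiles, where $D_T$ locally folds over itself, is the delicate point on which the whole argument turns.
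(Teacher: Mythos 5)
First, a point of reference: the paper you are working from does not prove this statement at all --- it is imported wholesale from~\cite{DKK-09} (note the citation in the theorem header), and the present paper uses it strictly as a black box. So your attempt must stand on its own, and measured that way it has genuine gaps at each of its load-bearing points. For the forward direction, the sentence ``$\lessdot$ and $\rhd$ are precisely the two crossing patterns such a strip-system can realize without violating planarity'' is not an argument; it is a restatement of the thing to be proved. In a g-tiling the strips $Q_i$ and $Q_j$ may cross several times (every black $ij$-tile is an additional, oppositely oriented, crossing of the two dual curves), so one must classify arbitrary multi-crossing configurations compatible with axioms (T1)--(T4), and one must in addition extract the cardinality clause of Definition~1 --- that $X\rhd Y$ is only tolerated when $|X|\ge|Y|$. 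Your sketch never indicates where that size inequality would come from, and you yourself flag this step as the main obstacle without resolving it. The counting part (b) is also incomplete as stated: Euler's formula on the disc $D_T$ gives $|V_T|=|T|+n+1$, and deleting the $2|T^b|$ terminal vertices gives $|\mathfrak{S}_T|=|T^w|-|T^b|+n+1$; to reach $\binom{n+1}{2}+1$ you need the nontrivial fact that for every pair $i<j$ the number of white $ij$-tiles exceeds the number of black $ij$-tiles by exactly one. That is a real lemma about strips, not a consequence of the slogan that ``deleting the resulting terminal vertices leaves the count unchanged.''

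The converse direction is where the approach actually breaks. You propose to recover each tile $\tau(X;i,j)$ ``from the presence of its four corners'' in $\Cscr$, but terminal vertices are by definition excluded from the spectrum, so every tile incident to the bottom or top of a black tile has a corner that is absent from $\Cscr$. Concretely, in the paper's Figure~1 the spectrum $\mathfrak{S}_T$ does not contain the set $2$, yet $T$ contains the white tiles $\tau(2;1,3)$ and $\tau(2;3,4)$ and the black tile $\tau(2;1,4)$; none of these would ever be produced by your recipe, and the same failure occurs for every g-tiling that is not a pure tiling --- exactly the cases the theorem is about. The real content of the converse is to decide, from $\Cscr$ alone, where terminal vertices and black tiles must be \emph{created}; this is a global inference (in~\cite{DKK-09} it takes a multi-stage argument, and in the present paper the analogous constructive step --- Section~\ref{sec:proof} --- needs the full lattice machinery of Theorem~\ref{tm:prec-lat} just to build one new strip), not a family of forced local tests. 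Since the reconstruction is not well defined, the claims of uniqueness and efficiency that you derive from ``every choice is forced'' also lose their justification.
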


In what follows, when it is not confusing, we may speak of a vertex or edge of
$G_T$ as a vertex or edge of $T$. The map $\sigma$ of the tiles in $T$ to
squares in $conv(2^{[n]})$ is extended, in a natural way, to the vertices,
edges, subgraphs or other objects in $G_T$. Note that the embedding of
$\sigma(G_T)$ in the disc $D_T$ is \emph{planar} (unlike $G_T$ and $Z$, in
general), i.e. any two edges of $\sigma(G_T)$ can intersect only at their end
points. It is convenient to assume that the clockwise orientations on $Z$ and
$D_T$ are agreeable, in the sense that the image by $\sigma$ of the boundary
cycle $(z_0,z^\ell_1,\ldots,z^\ell_n,z^r_1,\ldots,z^r_n=z_0)$ is oriented
clockwise around the interior of $D_T$. Then the orientations on a tile
$\tau\in T$ and on the square $\sigma(\tau)$ are consistent when $\tau$ is
white, and different when $\tau$ is black.

\subsection{Elementary properties of generalized tilings}  \label{ssec:Tprop}

The properties of g-tilings reviewed in this subsection can be obtained rather
easily from the above axioms; see~\cite{DKK-09} for more explanations. Let $T$
be a g-tiling on $Z=Z_n$.
\medskip

\noindent \textbf{1.} Let us say that the edges of $T$ occurring in black tiles
(as side edges) are {\em black}, and the other edges of $T$ are {\em white}.
For a vertex $v$ and two edges $e,e'\in E_T(v)$, let $\Theta(e,e')$ denote the
cone (with angle $<\pi$) in the plane pointed at $v$ and generated by these
edges (ignoring their directions). When another edge $e''\in E_T(v)$ (a tile
$\tau\in F_T(v)$) is contained in $\Theta(e,e')$, we say that $e''$ (resp.
$\tau$) lies \emph{between} $e$ and $e''$.  When these $e,e'$ are edges of a
tile $\tau$, we also write $\Theta(\tau;v)$ for $\Theta(e,e')$ (the conic hull
of $\tau$ at $v$), and denote by $\theta(\tau,v)$ the angle of this cone taken
with sign $+$ if $\tau$ is white, and sign $-$ if $\tau$ is black. The sum
$\sum(\theta(\tau,v)\colon \tau\in F_T(v))$ is denoted by $\rho(v)$ and called
the \emph{full angle} at $v$. Terminal vertices of $T$ behave as follows.
  \begin{corollary} \label{cor:termv}
Let $v$ be a terminal vertex belonging to a black ~$ij$-tile $\tau$. Then:

{\rm(i)} $v$ is not connected by edge with any other terminal vertex of $T$ (in
particular, $E_T(v)$ contains exactly two black edges, namely, those belonging
to $\tau$);

{\rm(ii)} $E_T(v)$ contains at least one white edge and all such edges $e$, as
well as all tiles in $F_T(v)$, lie between the two black edges in $E_T(v)$ (so
$e$ is a $q$-edge with $i<q<j$);

{\rm(iii)} $\rho(v)=0$;

{\rm(iv)} $v$ does not belong to the boundary of $Z$ (so each boundary edge $e$
of $Z$, as well as the tile containing $e$, is white).
  \end{corollary}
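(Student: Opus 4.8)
The plan is to reduce everything to the case of a bottom vertex and then read off all four assertions from the local picture of the tiles surrounding $v$. Since (T1)--(T4) are preserved by the reversed g-tiling $T^{rev}$, under which black tiles correspond to black tiles and bottom and top vertices are interchanged (the tile $\tau(X;i,j)$ becomes $\tau([n]-Xij;i,j)$, so its top $Xij$ becomes the bottom of the reversed tile), it suffices to treat $v=b(\tau)$ for a black $ij$-tile $\tau=\tau(X;i,j)$; thus $X=v$, $\ell(\tau)=Xi$, $r(\tau)=Xj$. By (T3) every edge in $E_T(v)$ leaves $v$, and $v$ is a vertex of no black tile other than $\tau$. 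Since a black edge is a side of a black tile and any black tile having $v$ as a vertex must be $\tau$, the only black edges in $E_T(v)$ are $b\ell(\tau)$ (an $i$-edge) and $br(\tau)$ (a $j$-edge); this is the parenthetical claim in~(i).

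Next I would describe $F_T(v)$. The edges $b\ell(\tau)$ and $br(\tau)$ span the cone $\Theta(\tau;v)$. By (T1) each of these two edges is shared with exactly one further tile, which by (T2) is white and \emph{overlaps} $\tau$; hence each such white tile lies inside $\Theta(\tau;v)$. Starting from $br(\tau)$ and walking around $v$ through successively shared edges, (T2) forces consecutive white tiles not to overlap, so their cones sweep monotonically and disjointly across $\Theta(\tau;v)$, and the walk must return to $b\ell(\tau)$ from inside the cone. Thus the white tiles of $F_T(v)$ tile $\Theta(\tau;v)$ exactly once, every white edge in $E_T(v)$ is a $q$-edge with $i<q<j$, and no edge or tile at $v$ falls outside the cone, which is assertion~(ii). (A single white tile cannot fill the whole cone, for it would then be $\tau(X;i,j)$ and coincide in the plane with $\tau$, violating (T1); hence at least one interior white edge exists.) Summing signed angles, the white tiles contribute $+\alpha$ and $\tau$ contributes $-\alpha$, where $\alpha$ is the unsigned angle of $\Theta(\tau;v)$, so $\rho(v)=0$, giving~(iii).

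From this picture every edge in $E_T(v)$ is shared by two tiles, so none is a boundary edge of $Z$; hence $v$ is an interior vertex of $D_T$, i.e.\ $v\notin bd(Z)$. Moreover a boundary edge lies in a single tile, and were that tile black the edge would be a side of a black tile and so would have a terminal endpoint (its $b$ or $t$) on $bd(Z)$, contradicting the part of~(iv) just shown applied to that endpoint; therefore every boundary edge and its tile are white. This proves~(iv). Finally, for the remaining part of~(i) I would verify that no neighbour $w=Xq$ ($i\le q\le j$) of $v$ is terminal: if $w=\ell(\tau)$ or $w=r(\tau)$ then $w$ is a vertex of the black tile $\tau$, so by (T3) it is neither the bottom nor the top of another black tile; and if $i<q<j$ then $w$ is the right vertex of some white tile of $F_T(v)$, hence carries an edge \emph{leaving} it and so cannot be the top of a black tile, while the edge $vw$ \emph{entering} $w$ shows it cannot be the bottom of one. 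In every case $w$ is nonterminal, completing~(i).

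The main obstacle is the second paragraph: making rigorous that the walk around $v$ closes up and fills $\Theta(\tau;v)$ exactly once. This is where the disc axiom (T4) is essential, guaranteeing that the tiles of $F_T(v)$ form a single arc or cycle in the link of $v$ in $D_T$, together with the overlap axiom (T2), which pins the orientation of the sweep inward and prevents the white tiles from wrapping around the far side of $v$. Once this local normal form is secured, assertions~(i)--(iv) follow as indicated.
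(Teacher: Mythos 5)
The paper never proves this corollary itself: Subsection~3.2 explicitly imports it from~\cite{DKK-09} (``can be obtained rather easily from the above axioms''), so your attempt has to be judged on its own merits. Your overall strategy --- reduce to $v=b(\tau)$ via $T^{rev}$, use (T3) to see that every edge at $v$ leaves $v$ and that $\tau$ is the only black tile at $v$, then sweep the fan of white tiles across $\Theta(\tau;v)$ --- is the natural one, and for an \emph{interior} vertex $v$ it can indeed be made rigorous: the link of $\sigma(v)$ in $D_T$ is then a cycle, (T2) puts the two white tiles sharing $b\ell(\tau)$, $br(\tau)$ on the cone side of those edges and puts consecutive white tiles on opposite sides of their shared edges, and the monotone closed sweep fills $\Theta(\tau;v)$ exactly once, from which (i)--(iii) follow as you say. (One local looseness: overlap with $\tau$ only forces a white neighbour onto the correct side of the shared black edge, not literally ``inside $\Theta(\tau;v)$''; containment in the cone only comes out at the end, once the sweep is known to close up at the $i$-edge.)

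The genuine gap is a circularity around (iv). Your argument for (ii) uses two facts that are equivalent to (iv) itself: that $b\ell(\tau)$ and $br(\tau)$ are not boundary edges of $Z$ (otherwise (T1) supplies no second tile on them and the walk cannot start), and that the walk closes into a cycle --- your own phrase ``a single arc or cycle'' concedes that (T4) gives only an arc when $\sigma(v)\in bd(D_T)$, i.e.\ when $v\in bd(Z)$. In the arc case nothing in your argument yields a contradiction: the tiles at $v$ form two chains of white tiles sweeping \emph{outward} from $\tau$ to the two boundary edges of $Z$ at $v$, a picture that satisfies (T1)--(T3) locally and in which (ii) and (iii) simply fail; yet you then deduce (iv) from ``this picture'', which presupposed the cyclic case. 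The missing step is concrete: by (T3) every boundary vertex of $Z$ other than $z_0$ has an entering boundary edge, so $v=b(\tau)\in bd(Z)$ forces $v=z_0$ (dually $t(\tau)=z_n$), and this one case cannot be excluded by any analysis local at $v$. One way to close it is to look at the opposite terminal vertex: by (T3) every tile containing $t(\tau)$ has its top there, and since the only parallelogram with top $\{i,j\}$ is $\tau(\emptyset;i,j)$, axiom (T1) (no two tiles coincide in the plane) makes $\tau$ the unique tile at $t(\tau)$; hence $\sigma(t(\tau))$ lies in a single square, so it is a boundary point of $D_T$, forcing $\{i,j\}\in\{\{1,2\},\{n-1,n\}\}$, and then the unique tile containing the adjacent boundary edge of $Z$ at that vertex would again have to be $\tau$, which it is not --- a contradiction. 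Until an argument of this non-local kind (or, alternatively, the orientation-consistency of black squares with the disc orientation) is supplied, your proof of (ii)--(iv) rests on the very statement (iv) it is meant to establish.
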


Note that~(ii) implies that
   \begin{numitem1}
if a black tile $\tau$ and a white tile $\tau'$ share an edge and if $v$ is
their common nonterminal vertex (which is either left or right in both
$\tau,\tau'$), then $\tau$ is contained in the cone $\Theta(\tau';v)$.
   \label{eq:nontermv}
   \end{numitem1}
Using this and applying Euler's formula to the planar graph $\sigma(G_T)$ on
$D_T$, one can specify the full angles at nonterminal vertices.
  \begin{corollary} \label{cor:rotation}
Let $v\in V_T$ be a nonterminal vertex.

{\rm(i)} If $v$ belongs to $bd(Z)$, then $\rho(v)$ is equal to the (positive)
angle between the boundary edges incident to $v$.

{\rm(ii)} If $v$ is inner (i.e. not in $bd(Z)$), then $\rho(v)=2\pi$.
  \end{corollary}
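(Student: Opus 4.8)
The plan is to read off $\rho(v)$ by lifting the local picture at $v$ to the planar graph $\sigma(G_T)$ on the disc $D_T$, where the tiles incident to $\sigma(v)$ genuinely tile a neighbourhood without overlap, and then to transport the information back to $Z$ through the projection $\pi$. The single fact I would exploit throughout is the one recorded just after axiom~(T4): $\pi$ is affine on each square $\sigma(\tau)$, orientation-\emph{preserving} when $\tau$ is white and orientation-\emph{reversing} when $\tau$ is black. Consequently a tile contributes $+\theta(\tau,v)$ or $-\theta(\tau,v)$ to any signed angular count performed in $Z$ exactly according to its colour, which is precisely the sign convention built into $\rho(v)$.

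First I would fix the cyclic order $\tau_1,\dots,\tau_m$ of the tiles of $F_T(v)$ around $\sigma(v)$ furnished by the planar embedding on $D_T$, with $\tau_k$ and $\tau_{k+1}$ meeting along the edge $e_k\in E_T(v)$ (the indices run cyclically when $v$ is inner, and from one boundary edge to the other when $v\in bd(Z)$; here Corollary~\ref{cor:termv}(iv) guarantees $v$ is not terminal so this fan is well defined). Tracking in $Z$ the direction of the successive edges $e_k$ as one sweeps through $\tau_k$, the direction rotates by $+\theta(\tau_k,v)$ if $\tau_k$ is white and by $-\theta(\tau_k,v)$ if $\tau_k$ is black, by the orientation behaviour of $\pi$; the nesting of a black cone inside the cone of the white tile sharing its edge, supplied by~\refeq{nontermv}, is what keeps this bookkeeping coherent at mixed vertices. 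Summing, the total turning equals $\rho(v)$. When $v$ is inner the edge direction returns to its start after a full loop, so $\rho(v)=2\pi w$ for an integer $w$; when $v\in bd(Z)$ the direction runs from one boundary edge to the other, so $\rho(v)$ equals the boundary angle plus an integer multiple of $2\pi$.

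It then remains to show that the winding integer is exactly $1$ (and that no extra full turn appears on the boundary). The number $w$ is, by construction, the winding number about $v$ of the image under $\pi$ of a small loop around $\sigma(v)$, hence equals the local degree $\deg_v(\pi)$. By the genericity~\refeq{xi}(ii), the lattice point $v$ is not contained in the closure of any tile other than those of $F_T(v)$, so every preimage of a point near $v$ lies in a tile of $F_T(v)$ and therefore near $\sigma(v)$; thus $\deg_v(\pi)$ equals the global degree of $\pi\colon D_T\to Z$. That global degree is $1$, because $\partial D_T=\pi^{-1}(bd(Z))$ is carried homeomorphically onto $bd(Z)$ preserving orientation — each boundary edge lies in exactly one tile by~(T1), and the clockwise orientations on $Z$ and $D_T$ agree. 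Hence $w=1$, giving $\rho(v)=2\pi$ at inner vertices and $\rho(v)$ equal to the boundary angle at boundary vertices, as claimed.

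The delicate point will be the middle step: making the orientation-sign bookkeeping rigorous at \emph{mixed} vertices, where in $Z$ the black tiles overlap the white ones and the naive angular order in the plane is scrambled by the folding. This is exactly where~\refeq{nontermv} is indispensable, as it pins each black cone inside an adjacent white cone so that the signed contributions telescope correctly and no spurious winding is created. As a global consistency check one may instead sum $\theta(\tau,v)$ over the four corners of every tile: a white parallelogram contributes $+2\pi$ and a black one $-2\pi$, and since terminal vertices give $\rho=0$ by Corollary~\ref{cor:termv}(iii), one obtains $\sum_{v\ \mathrm{nonterminal}}\rho(v)=2\pi(W-B)$, with $W$ and $B$ the numbers of white and black tiles; together with Euler's formula $V-E+F=1$ for the disc $D_T$, this reconciles the aggregate with the per-vertex values established above.
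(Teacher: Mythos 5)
Your local set-up is sound: the fan of tiles of $F_T(v)$ around $\sigma(v)$ in the planar graph on $D_T$, the signed turning by $\pm\theta(\tau,v)$ according to colour, and the conclusion that $\rho(v)$ equals $2\pi w$ (inner case) or the boundary angle plus $2\pi w$ (boundary case) for an integer winding number $w$. The computation of the \emph{global} degree of $\pi\colon D_T\to Z$ as $1$ is also correct. The gap is the localization step that connects the two. You claim that, by~\refeq{xi}(ii), the point $v$ is contained in no tile outside $F_T(v)$, so that every preimage of a point near $v$ lies near $\sigma(v)$ and hence $w=\deg_v(\pi)=\deg(\pi)=1$. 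This is false: condition~\refeq{xi}(ii) only guarantees that distinct subsets of $[n]$ give distinct \emph{points}; it does not prevent a vertex of $G_T$ from lying in the interior of a tile not incident to it, and in a g-tiling this happens whenever black tiles are present. In the paper's own example (Figure~\ref{fig:GT}), the nonterminal vertex $23$ lies in the interior of the black tile $\tau(2;1,4)$: since the $\xi_i$ have unit height and follow clockwise, $\xi_3=\lambda\xi_1+(1-\lambda)\xi_4$ with $0<\lambda<1$, so $\xi_2+\xi_3$ is an interior point of $\xi_2+[0,1]\xi_1+[0,1]\xi_4$. Consequently a regular value near $v$ has preimages far from $\sigma(v)$, and what the degree argument really yields is $1=\deg(\pi)=w(v)+(W_v-B_v)$, where $W_v$ (resp.\ $B_v$) counts white (resp.\ black) tiles covering $v$ but not belonging to $F_T(v)$. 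Proving $W_v=B_v$ is essentially as hard as the corollary itself, so the argument as written does not close.

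The repair is exactly the computation you relegate to a ``global consistency check,'' and it is the paper's proof (Euler's formula applied to $\sigma(G_T)$ on $D_T$, together with~\refeq{nontermv}): it is not a check on the per-vertex values, it is the only thing that pins them down. Concretely, summing angles tile by tile and using $\rho=0$ at terminal vertices (Corollary~\ref{cor:termv}(iii)) gives $\sum_{v\ \mathrm{nonterminal}}\rho(v)=2\pi(W-B)$; Euler's formula $V-E+F=1$ combined with the incidence count $4(W+B)=2E-2n$ and the terminal count $2B$ yields $W-B=|\mathfrak{S}_T|-n-1$. Your local analysis gives $\rho(v)=2\pi k_v$ with an \emph{integer} $k_v\ge 1$ at inner nonterminal vertices (this positivity is where~\refeq{nontermv} is genuinely needed, as you anticipated), and $\rho(v)=\alpha_v+2\pi k_v$ with $k_v\ge 0$ at boundary vertices. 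Since the $2n$ boundary angles $\alpha_v$ of the zonogon sum to $(2n-2)\pi$ and the number of inner nonterminal vertices is $|\mathfrak{S}_T|-2n$, the two counts force $\sum k_v=|\mathfrak{S}_T|-2n$, which is possible only if every inner $k_v$ equals $1$ and every boundary $k_v$ equals $0$. That global pigeonhole, not a local degree identity, is what rules out extra full turns.
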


 \noindent \textbf{2.}
Using~\refeq{nontermv} and Corollary~\ref{cor:rotation}, one can obtain the
following useful (though rather lengthy) description of the local structure of
edges and tiles at nonterminal vertices.
  \begin{corollary} \label{cor:ord-mixed}
Let $v$ be a nonterminal (ordinary or mixed) vertex of $T$ different from
$z_0,z_n$. Let $e_1,\ldots,e_p$ be the sequence of edges leaving $v$ and
ordered clockwise around $v$ (or by increasing their labels), and
$e'_1,\ldots,e_{p'}$ the sequence of edges entering $v$ and ordered
counterclockwise around $v$ (or by decreasing their labels). Then there are
integers $r,r'\ge 0$ such that:
  \begin{itemize}
\item[\rm(i)] $r+r'<\min\{p,p'\}$, the edges $e_{r+1},\ldots,e_{p-r'}$ and
$e'_{r+1},\ldots,e'_{p'-r'}$ are white, the other edges in $E_T(v)$ are black,
$r=0$ if $v\in \ell bd(Z)$, and $r'=0$ if $v\in rbd(Z)$;
\item[\rm(ii)] for $q=r+1,\ldots,p-r'-1$, the edges $e_q,e_{q+1}$ are spanned by a
white tile (so such tiles have the bottom at $v$ and lie between $e_{r+1}$ and
$e_{p-r'}$);
\item[\rm(iii)] for $q=r+1,\ldots,p'-r'-1$, the edges $e'_q,e'_{q+1}$ are spanned by a
white tile $\tau$ (so such tiles have the top at $v$ and lie between $e'_{r+1}$
and $e'_{p'-r'}$);
\item[\rm(iv)] unless $v\in \ell bd(Z)$, each of the pairs $\{e_1,e'_{r+1}\},
\{e_2,e'_r\},\ldots,\{e_{r+1},e'_1\}$ is spanned by a white tile, and each of
the pairs $\{e_1,e'_{r}\},\{e_2,e'_{r-1}\},\ldots,\{e_{r},e'_1\}$ is spanned by
a black tile (all tiles have the right vertex at $v$);
\item[\rm(v)] unless $v\in rbd(Z)$, each of the pairs $\{e_p,e'_{p'-r'}\},
\{e_{p-1},e'_{p'-r'+1}\},\ldots,\{e_{p-r'},e'_{p'}\}$ is spanned by a white
tile, and each of the pairs $\{e_p,e'_{p'-r'+1}\}$, $\{e_{p-1},e'_{p'-r'+2}\},
\ldots,\{e_{p-r'+1},e'_{p'}\}$ is spanned by a black tile (all tiles have the
left vertex at $v$).
  \end{itemize}
In particular, (a) there is at least one white edge leaving $v$ and at
least one white edge entering $v$; (b) the tiles in (ii)--(v) give a
full list of tiles in $F_T(v)$; and (c) any two tiles $\tau,\tau'\in
F_T(v)$ with $r(\tau)=\ell(\tau')=v$ do not overlap (have no common
interior point).

Also: for $v=z_0,z_n$, all edges in $E_T(v)$ are white and the pairs of
consecutive edges are spanned by white tiles.
  \end{corollary}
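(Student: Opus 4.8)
The plan is to read off the local structure at $v$ from two ingredients already in hand. The first is the planarity of the embedding $\sigma(G_T)$ in the disc $D_T$: since $\sigma(v)$ is an interior point of $D_T$ when $v$ is inner (and lies on $\partial D_T$ when $v\in bd(Z)$), the squares $\sigma(\tau)$, $\tau\in F_T(v)$, must fit around $\sigma(v)$ as a single closed fan (respectively, a single arc-fan), consecutive squares sharing an edge through $\sigma(v)$, each edge of $E_T(v)$ being shared by exactly two consecutive tiles (this is where (T1) enters). The second ingredient is the overlap-and-angle data: \refeq{nontermv} tells us how a black tile hides inside the cone of a white neighbour, and Corollary~\ref{cor:rotation} tells us that this fan projects back into the plane with total signed sweep $\rho(v)=2\pi$ when $v$ is inner, and equal to the boundary angle when $v\in bd(Z)$.

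First I would fix the angular bookkeeping at $v$. Viewing $v$ as a subset, the edges leaving $v$ carry the labels of $[n]\setminus v$ and point upward, while the edges entering $v$ carry the labels of $v$ and point downward; since $\xi_1,\ldots,\xi_n$ run clockwise, a short computation places the full clockwise cyclic order of $E_T(v)$ as $e_1,\ldots,e_p,e'_{p'},\ldots,e'_1$, so that the only two \emph{mixed} gaps are the right gap between $e_p$ and $e'_{p'}$ and the left gap between $e'_1$ and $e_1$. Next I would record, for each of the four positions of $v$ in a tile $\tau(X;i,j)$, which pair of incident edges $\tau$ spans at $v$: a bottom (resp. top) tile spans two leaving (resp. entering) edges; a left-vertex tile spans an entering $i$-edge and a leaving $j$-edge with $i<j$; a right-vertex tile spans an entering $j$-edge and a leaving $i$-edge with $i<j$. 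Computing the interior cones shows that bottom and top tiles sit on the upper and lower arcs, while right-vertex tiles open into the left gap and left-vertex tiles into the right gap.

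Now I would bring in the black tiles. By (T3) the bottom and top of a black tile are terminal, so at the nonterminal vertex $v$ every black tile of $F_T(v)$ is a left- or right-vertex tile; hence the two arcs carry only white tiles, which already yields the consecutive white bottom tiles of (ii) and top tiles of (iii). By (T2) no two black tiles are adjacent and a black tile sharing an edge with a white one overlaps it, while \refeq{nontermv} forces the black tile to lie inside that white tile's cone. Walking the fan through a mixed gap, these facts produce an alternating sequence white, black, white, $\ldots$, black, white of right-vertex (resp. left-vertex) tiles whose shared edges zigzag between leaving and entering edges, the labels advancing one step at a time; matching shared edges to labels gives exactly the nested pairing $\{e_k,e'_{r+2-k}\}$ (white) and $\{e_k,e'_{r+1-k}\}$ (black) of (iv), and symmetrically (v). The integers $r,r'$ are then the numbers of black tiles in the left and right gaps, (i) records which edges are black, and Corollary~\ref{cor:rotation} certifies that the fan closes with no room to spare, giving (b). The boundary cases $v\in\ell bd(Z)$ and $v\in rbd(Z)$ suppress the corresponding gap (forcing $r=0$ or $r'=0$), and the pure source/sink $v=z_0,z_n$ degenerate to an all-white fan on a single arc.

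The main obstacle, I expect, is the zigzag step: converting the soft statements ``blacks are isolated and nested in white cones'' into the exact interleaving of (iv)--(v). The trouble is that each black tile overlaps both of its white neighbours, so as one goes around $\sigma(v)$ through the fan the edge-rays are \emph{not} met in monotone angular order near the gaps — the traversal folds back at every black tile. One must therefore apply \refeq{nontermv} to \emph{both} edges of each black tile to show that consecutive overlapping tiles share alternately a leaving edge and an entering edge and that the involved labels advance by a single step each time, so that the pairing is forced to be the claimed nested one and not some other interleaving. Once this is established, claim (c) (a left-vertex and a right-vertex tile at $v$ cannot overlap) and the well-definedness of $r,r'$ follow at once, since the left and right gaps are angularly disjoint.
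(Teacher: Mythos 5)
Your proposal is correct and follows essentially the route the paper itself indicates: the paper does not prove this corollary (it is reviewed from~\cite{DKK-09}), saying only that it is obtained ``using~\refeq{nontermv} and Corollary~\ref{cor:rotation}'', and these—together with axioms (T1)--(T3) and the disc structure of $D_T$—are exactly the ingredients you deploy in the same roles (the fan of squares around $\sigma(v)$, black tiles confined to left/right positions by (T3), alternation and label-monotonicity in the gaps via (T2) and the nesting property~\refeq{nontermv}, closure via the full-angle count). The one step you compress, ruling out several separate zigzag runs so that the tiles in (ii)--(v) exhaust $F_T(v)$ (claim (b)), is indeed where $\rho(v)=2\pi$ must be converted into a signed-crossing (winding) count of the west and east directions, which is precisely the use of Corollary~\ref{cor:rotation} you invoke.
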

(When $v$ is ordinary, we have $r=r'=0$.) The case with $p=4$, $p'=5$,
$r=2$, $r'=1$ is illustrated in the picture; here the black edges are
drawn in bold and the thin (bold) arcs indicate the pairs of edges
spanned by white (resp. black) tiles.

 \begin{center}
  \unitlength=1mm
  \begin{picture}(80,25)
 \put(40,12){\circle*{2}}
  \put(36,0){\vector(1,3){3.6}}
  \put(48,0){\vector(-2,3){7}}
  \put(40,12){\vector(0,1){12}}
{\thicklines
  \put(4,0){\vector(3,1){34}}
  \put(22,0){\vector(3,2){16}}
  \put(64,0){\vector(-2,1){22.5}}
  \put(40,12){\vector(2,1){24}}
  \put(40,12){\vector(-1,1){12}}
  \put(40,12){\vector(-3,1){36}}
 }
  \put(44,11){$v$}
  \put(5,4){$e'_1$}
  \put(60,4){$e'_{5}$}
  \put(5,19){$e_1$}
  \put(60,18){$e_4$}
 \qbezier(40,17.5)(33,15)(35,10.5)
 \qbezier(33.5,18.5)(28,12)(33,7.5)
 \qbezier(27,16.5)(24,4)(36.5,1.5)
 \qbezier(37,3)(42,1.5)(45,4.5)
 \qbezier(44,6)(49,10)(46,15)
 \qbezier(50,7)(55,20)(40,21)
{\thicklines
 \qbezier(35,17)(31,13)(33.5,10)
 \qbezier(29,16)(27,10)(31.25,6.0)
 \qbezier(48,8)(51,12)(48,16)
  }
   \end{picture}
   \end{center}

\noindent \textbf{3.} In view of~\refeq{xi}(ii), the graph $G_T=(V_T,E_T)$ is
{\em graded} for each label $i\in[n]$, which means that for any closed path $P$
in $G_T$, the amounts of forward $i$-edges and backward $i$-edges in $P$ are
equal. In particular, this easily implies that
  \begin{numitem1}
if four vertices and four edges of $G_T$ form a (non-directed) cycle, then they
are the vertices and edges of a tile (not necessarily contained in $T$).
  \label{eq:4edges}
  \end{numitem1}

Hereinafter, a path in a directed graph is meant to be a sequence $P=(\tilde
v_0,\tilde e_1,\tilde v_1,\ldots,\tilde e_r,\tilde v_r)$ in which each $\tilde
e_p$ is an edge connecting vertices $\tilde v_{p-1},\tilde v_p$; an edge
$\tilde e_p$ is called {\em forward} if it is directed from $\tilde v_{p-1}$ to
$\tilde v_p$ (denoted as $\tilde e_p=(\tilde v_{p-1},\tilde v_p)$), and {\em
backward} otherwise (when $\tilde e_p=(\tilde v_p,\tilde v_{p-1})$). When
$v_0=v_r$ and $r>0$, ~$P$ is a \emph{closed path}, or a \emph{cycle}. The path
$P$ is called {\em directed} if all its edges are forward, and {\em simple} if
all vertices $v_0,\ldots,v_r$ are different. $P^{rev}$ denotes the reversed
path $(\tilde v_r,\tilde e_r,\tilde v_{r-1},\ldots,\tilde e_1,\tilde v_0)$.
Sometimes we will denote a path without explicitly indicating its edges:
$P=(\tilde v_0,\tilde v_1,\ldots,\tilde v_r)$. A directed graph is called
\emph{acyclic} if it has no directed cycles.

\subsection{Strips, contractions, expansions, and others} \label{ssec:s-c-e}

In this subsection we describe additional, more involved, results and
constructions concerning g-tilings that are given in~\cite{DKK-09} and
will be needed to us to prove Theorem~B. They are exposed in
Propositions~\ref{pr:strips}--\ref{pr:4edges} below. \medskip

\noindent \textbf{A.} \emph{Strips in $T$.} \smallskip

\noindent \textbf{Definition 3} ~Let $i\in[n]$. An $i$-{\em strip} (or a {\em
dual $i$-path}) in $T$ is a maximal alternating sequence
$Q=(e_0,\tau_1,e_1,\ldots,\tau_r,e_r)$ of edges and tiles of $T$ such that: (a)
$\tau_1,\ldots,\tau_r$ are different tiles, each being an $ij$- or $ji$-tile
for some $j$, and (b) for $p=1,\ldots,r$, ~$e_{p-1}$ and $e_p$ are the opposite
$i$-edges of $\tau_p$. \medskip

(Recall that speaking of an $i'j'$-tile, we assume that $i'<j'$.) In view of
axiom~(T1), $Q$ is determined uniquely (up to reversing it, and up to shifting
cyclically when $e_0=e_r$) by any of its edges or tiles. For $p=1,\ldots,r$,
let $e_p=(v_p,v'_p)$. Define the {\em right boundary} of $Q$ to be the (not
necessarily directed) path $R_Q=(v_0,a_1,v_1,\ldots,a_r,v_r)$, where $a_p$ is
the edge of $\tau_p$ connecting $v_{p-1}$ and $v_p$. Similarly, the {\em left
boundary} of $Q$ is the path $L_Q=(v'_0,a'_1,v'_1,\ldots,a'_r,v'_r)$, where
$a'_p$ is the edge of $\tau_p$ connecting $v'_{p-1}$ and $v'_p$. Then two edges
$a_p,a'_p$ have the same label. Considering $R_Q$ and using the fact that $G_T$
is graded, one shows that
  \begin{numitem1}
$Q$ cannot be cyclic, i.e. the edges $e_0$ and $e_r$ are different.
  \label{eq:noncycl}
  \end{numitem1}
In view of the maximality of $Q$, ~\refeq{noncycl} implies that one of
$e_0,e_r$ belongs to the left boundary, and the other to the right boundary of
the zonogon $Z$; we assume that $Q$ is directed so that $e_0\in \ell bd(Z)$
(justifying the assignment of the right and left boundaries for $Q$).
Properties of strips are exposed in the following

  \begin{prop} \label{pr:strips}
~For each $i\in[n]$, there is exactly one $i$-strip, $Q_i$ say. It contains all
$i$-edges of $T$, begins with the edge $z^\ell_{i-1}z^\ell_i$ of $\ell bd(Z)$
and ends with the edge $z^r_{i}z^r_{i-1}$ of $rbd(Z)$. Furthermore, each of
$R_{Q_i}$ and $L_{Q_i}$ is a simple path, $L_{Q_i}$ is disjoint from $R_{Q_i}$
and is obtained by shifting $R_{Q_i}$ by the vector $\xi_i$. An edge of
$R_{Q_i}$ is forward if and only if it belongs to either a white $i\ast$-tile
or a black $\ast i$-tile in $Q_i$, and similarly for the edges of $L_{Q_i}$.
  \end{prop}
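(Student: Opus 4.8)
The plan is to view the strip $Q_i$ as the ``dual path'' of a transversal curve that crosses the $i$-edges of $T$, and to read off all four assertions from how such a curve passes through a single tile together with the disc structure~(T4). I would first settle existence, uniqueness and the boundary endpoints. Every tile containing an $i$-edge is, by definition, an $ij$- or $ji$-tile, and such a tile has exactly two $i$-edges, which are opposite and parallel to $\xi_i$. By~(T1) an interior $i$-edge lies in exactly two tiles and a boundary $i$-edge in exactly one; moreover the only $i$-edges on $bd(Z)$ are $z^\ell_{i-1}z^\ell_i$ and $z^r_iz^r_{i-1}$. Hence the maximal alternating sequence built through any $i$-edge can always be continued across an interior $i$-edge and can terminate only at one of these two boundary edges. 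Since such a sequence is non-cyclic by~\refeq{noncycl} and is uniquely determined by any of its edges, there is precisely one $i$-strip; it contains every $i$-edge (the strip through each $i$-edge coincides with it), and its two end-edges are exactly $z^\ell_{i-1}z^\ell_i$ and $z^r_iz^r_{i-1}$. I orient $Q_i$ so that $e_0=z^\ell_{i-1}z^\ell_i$.

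The shift relation is then immediate: writing each $i$-edge as $e_p=(v_p,v'_p)$ with $v'_p=v_p+\xi_i$, the two non-$i$ edges $a_p$ and $a'_p$ of the parallelogram $\tau_p$ are parallel translates by $\xi_i$, so $L_{Q_i}=R_{Q_i}+\xi_i$ as paths. For simplicity and disjointness I would pass to the surface $D_T$ through the map $\sigma$, which is injective on vertices (a vertex of $G_T$ is literally a subset of $[n]$). The squares $\sigma(\tau_p)$ have pairwise disjoint interiors and tile $D_T$, so the strip determines an arc in $D_T$ that crosses the consecutive squares through their shared $i$-edges; being embedded in the disc it is a simple arc joining two boundary points and separates $D_T$. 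Its two banks carry $\sigma(R_{Q_i})$ and $\sigma(L_{Q_i})$, which are therefore simple and mutually disjoint; injectivity of $\sigma$ on vertices transfers this back to $R_{Q_i}$ and $L_{Q_i}$.

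Finally, for the forward/backward characterization I would argue tile by tile. Within any tile the edge directions are uniform---the bottom is the unique source and the top the unique sink---so $a_p$ has a fixed intrinsic direction determined by the tile type; what varies is the direction in which $Q_i$ traverses $\tau_p$. Because the orientations of $\tau$ and of $\sigma(\tau)$ agree for white tiles and are opposite for black ones, the transversal arc enters a white tile through its left $i$-edge and leaves through its right one, and does the reverse for a black tile. Comparing, in each of the four cases ($ij$ versus $ji$, white versus black), the intrinsic direction of $a_p$ with the direction in which $R_{Q_i}$ runs through $\tau_p$ yields that $a_p$ is forward exactly when $\tau_p$ is a white $i\ast$-tile or a black $\ast i$-tile, and likewise for $L_{Q_i}$.

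I expect the genuine obstacle to be the topological step: making precise that the dual arc is embedded and separating in $D_T$, so that its two banks give simple, disjoint boundaries. This is where axiom~(T4) and the planarity of $\sigma(G_T)$ are essential; the remaining orientation bookkeeping in the last step is routine once the global left-to-right orientation of $Q_i$ and the white/black orientation convention are fixed.
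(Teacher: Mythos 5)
Your overall architecture is sound, and where it can be compared with the paper it agrees: note that the paper itself does not prove Proposition~\ref{pr:strips} at all (it is imported from~\cite{DKK-09}), and the only in-paper argument is the sketch around Definition~3, namely non-cyclicity \refeq{noncycl} plus the remark that maximality then forces the two end edges onto $\ell bd(Z)$ and $rbd(Z)$ --- which is exactly your first paragraph. Your uniqueness count (a strip is determined by any of its edges, hence two distinct strips would be edge-disjoint and would need four distinct boundary $i$-edges, while only $z^\ell_{i-1}z^\ell_i$ and $z^r_iz^r_{i-1}$ exist), the shift $L_{Q_i}=R_{Q_i}+\xi_i$ read off a single parallelogram, and the reduction of the last assertion to a four-case orientation check anchored by the white/black orientation-reversal convention on $\sigma$, are all correct in outline.

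The genuine gap is precisely the step you flag as the crux, but you misdiagnose what it requires: from the facts that the dual arc is embedded and separates the disc $D_T$ it does \emph{not} follow that the two banks are simple paths. A bank lies in the closure of one component of $D_T$ minus the arc, and a path in a closed region can perfectly well revisit a vertex; concretely, two non-consecutive squares $\sigma(\tau_p),\sigma(\tau_q)$ could a priori touch at a corner without the arc ever meeting itself, and your implicit claim that the union of the squares is an embedded quadrilateral ribbon is exactly what would need proof. The fix is combinatorial, not topological: if $v_p=v_q$ with $p<q$, then $e_p=(v_p,v_pi)$ and $e_q=(v_q,v_qi)$ join the same pair of points of $G_T$ and hence are the same edge; for $q=p+1$ this would make the two \emph{opposite} $i$-edges of the parallelogram $\tau_{p+1}$ coincide, and for $q>p+1$ it would place that single edge in the pairwise distinct tiles $\tau_p,\tau_{p+1},\tau_q,\tau_{q+1}$ (or in two tiles when it is a boundary edge), contradicting (T1). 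This gives simplicity of $R_{Q_i}$, and of $L_{Q_i}$, directly. Disjointness of the two boundaries needs no argument at all, topological or otherwise: every vertex of $R_{Q_i}$ omits $i$ while every vertex of $L_{Q_i}$ contains $i$. Finally, in the orientation step you should state the anchor explicitly: the tile $\tau_1$ containing $z^\ell_{i-1}z^\ell_i$ is white by Corollary~\ref{cor:termv}(iv) and is an $i\ast$-tile crossed forward; only with this base case does your (correct) claim that white and black tiles are crossed in opposite senses pin down \emph{which} color corresponds to forward.
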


\noindent \textbf{B.} \emph{Strip contractions.} \smallskip

Let $i\in[n]$.  Partition $T$ into three subsets $T^0_i, T^-_i,T^+_i$, where
$T^0_i$ consists of all $i\ast$- and $\ast i$-tiles, $T^-_i$ consists of the
tiles $\tau(X;i',j')$ with $i',j'\ne i$ and $i\not\in X$, and $T^+_i$ consists
of the tiles $\tau(X;i',j')$ with $i',j'\ne i$ and $i\in X$. Then $T_i^0$ is
the set of tiles occurring in the $i$-strip $Q_i$, and the tiles in $T_i^-$ are
vertex disjoint from those in $T_i^+$. \medskip

 \noindent \textbf{Definition 4}
~The $i$-{\em contraction} of $T$ is the collection $T/i$ of tiles
obtained by removing $T_i^0$, keeping the members of $T_i^-$, and
replacing each $\tau(X;i',j')\in T_i^+$ by $\tau(X-i;i',j')$. The
black/white coloring of tiles in $T/i$ is inherited from $T$.
\medskip

So the tiles of $T/i$ live within the zonogon generated by the vectors
$\xi_q$ for $q\in[n]-i$. Clearly if we remove from the disc $D_T$ the
interiors of the edges and squares in $\sigma(Q_i)$, then we obtain two
closed simply connected regions, one containing the squares
$\sigma(\tau)$ for all $\tau\in T_i^-$, denoted as $D_{T_i^-}$, and the
other containing $\sigma(\tau)$ for all $\tau\in T_i^+$, denoted as
$D_{T_i^+}$. Then $D_{T/i}$ is the union of $D_{T_i^-}$ and
$D_{T_i^+}-\eps_i$, where $\eps_i$ is $i$-th unit base vector in
$\Rset^{[n]}$. In other words, $D_{T_i^+}$ is shifted by $-\eps_i$ and
the path $\sigma(L_{Q_i})$ in it (the left boundary of $\sigma(Q_i)$)
merges with the path $\sigma(R_{Q_i})$ in $D_{T_i^-}$. In general,
$D_{T_i^-}$ and $D_{T_i^+}-\eps_i$ may intersect at some other points,
and for this reason, $D_{T/i}$ need not be a disc. Nevertheless,
$D_{T/i}$ is shown to be a disc in two important special cases: $i=n$
and $i=1$; moreover, the following property takes place.
  \begin{prop}  \label{pr:contract}
~The $n$-contraction $T/n$ of $T$ is a (feasible) g-tiling on the zonogon
$Z_{n-1}$ generated by the vectors $\xi_1,\ldots,\xi_{n-1}$. Similarly, the
$1$-contraction $T/1$ is a g-tiling on the $(n-1)$-zonogon generated by the
vectors $\xi_2,\ldots,\xi_n$.
  \end{prop}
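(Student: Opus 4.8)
The plan is to prove Proposition~\ref{pr:contract} by verifying that the contracted collection $T/n$ satisfies all four axioms (T1)--(T4), with the main work concentrated on the disc axiom (T4). I would treat the case $i=n$ in detail, the case $i=1$ being symmetric (indeed, applying the $n$-contraction to the reversed g-tiling $T^{rev}$, whose validity was noted in the excerpt, recovers the $1$-contraction up to relabeling). The starting point is the structural decomposition already recorded before the statement: the $n$-strip $Q_n$ splits $D_T$ into the two closed simply connected regions $D_{T_n^-}$ and $D_{T_n^+}$, the tiles of $T_n^-$ and $T_n^+$ are vertex disjoint, and $D_{T/n}$ is obtained by shifting $D_{T_n^+}$ by $-\eps_n$ so that $\sigma(L_{Q_n})$ is glued onto $\sigma(R_{Q_n})$.

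First I would check (T1). Every edge of $T/n$ is inherited from an edge of $T$ that is not an $n$-edge, so boundary edges still lie in exactly one tile and interior edges in exactly two; the only subtlety is the glued seam, where an edge of $R_{Q_n}$ (previously bounding one tile of $T_n^-$ on its $Q_n$-side) is now identified with the corresponding edge of $L_{Q_n}$ (bounding one tile of $T_n^+$), so the seam edges also end up in exactly two tiles --- here I would use the last sentence of Proposition~\ref{pr:strips}, that $L_{Q_n}$ is obtained from $R_{Q_n}$ by the shift $\xi_n$, to see that the identification is edge-for-edge and label-preserving. Distinctness of the resulting tiles follows because the $T_n^-$-tiles keep their bottom vertices while the $T_n^+$-tiles have their bottoms dropped by $-\eps_n$, and the two families were vertex disjoint to begin with. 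Axioms (T2) and (T3) are local conditions on shared edges and on the edge-orientations around the bottom/top of black tiles; since contraction neither creates nor destroys black tiles (coloring is inherited) and preserves all non-$n$ incidences and edge directions, these transfer directly, the only point to inspect being incidences along the seam, which again reduce to the $\xi_n$-shift matching of the two strip boundaries.

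The crucial step is (T4): showing $D_{T/n}$ is a disc. This is exactly where the general warning in the text applies --- for an arbitrary $i$ the shifted regions $D_{T_i^-}$ and $D_{T_i^+}-\eps_i$ may meet at spurious points, and the union fails to be a disc. The hard part is therefore to prove that for $i=n$ no such spurious intersection occurs. The plan is to exploit that $n$ is the largest label: by the height/right-ness conventions and the clockwise ordering $\xi_1,\ldots,\xi_n$, the vector $\xi_n$ is the clockwise-extremal generator, so $R_{Q_n}$ lies along the right side of $Z$ and $D_{T_n^+}$ sits entirely ``to the $n$-side.'' I would argue that every vertex of $D_{T_n^+}$ is a set containing $n$ while every vertex of $D_{T_n^-}$ is a set not containing $n$, so after the shift by $-\eps_n$ a point of $D_{T_n^+}-\eps_n$ can coincide with a point of $D_{T_n^-}$ only along the seam $\sigma(R_{Q_n})=\sigma(L_{Q_n})-\eps_n$ itself; no interior collision is possible. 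Granting this, $D_{T/n}$ is the clean gluing of two closed discs along a single boundary arc (each region being simply connected, as recorded), which is again a disc, and one checks that its boundary is $\pi^{-1}$ of the boundary of $Z_{n-1}$.

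The remaining routine verification is that after removing $Q_n$ the surviving tiles genuinely tile the smaller zonogon $Z_{n-1}$ generated by $\xi_1,\ldots,\xi_{n-1}$ --- i.e. that the boundary cycle of $D_{T/n}$ projects onto $bd(Z_{n-1})$ --- which follows by tracking where the strip $Q_n$ begins and ends: by Proposition~\ref{pr:strips} it runs from $z^\ell_{n-1}z^\ell_n$ to $z^r_n z^r_{n-1}$, precisely the two boundary edges of $Z$ carrying label $n$, so deleting them and merging the two boundary arcs yields the $2(n-1)$-gon boundary of $Z_{n-1}$. I expect the identification of the seam (the $\xi_n$-shift matching of $L_{Q_n}$ with $R_{Q_n}$) and the no-spurious-intersection argument for (T4) to be the only genuinely delicate points; the rest is axiom-chasing that transfers verbatim under a shift that touches no non-$n$ coordinate.
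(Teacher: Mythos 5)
First, a point of reference: this paper contains no proof of Proposition~\ref{pr:contract} at all --- it is quoted, together with Propositions~\ref{pr:strips}--\ref{pr:4edges}, as a result imported from~\cite{DKK-09}. So your proposal can only be judged on its own merits, and judged so, it has a genuine gap at exactly the step you yourself single out as crucial, axiom (T4). Your argument that no spurious intersection occurs is: vertices of $D_{T_n^+}$ are sets containing $n$, vertices of $D_{T_n^-}$ are sets not containing $n$, hence after the shift by $-\eps_n$ the two regions can meet only along the seam. This is a non sequitur, and one can see that it \emph{must} be: the same dichotomy holds verbatim for every label $i$ (vertices of $T_i^+$-tiles contain $i$, vertices of $T_i^-$-tiles do not), yet the text explicitly warns that for a general $i$ the union $D_{T_i^-}\cup(D_{T_i^+}-\eps_i)$ need not be a disc. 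An argument that uses only label-generic facts therefore proves too much and cannot be correct. What the dichotomy actually gives is the opposite of what you want: it shows that after the shift both complexes lie in the same subcube $\{x\colon x_n=0\}\cong conv(2^{[n-1]})$, which is precisely why collisions become conceivable. Concretely, since each $\sigma(\tau)$ is a $2$-face of the cube, a spurious intersection amounts to a vertex $X$ of some $T_n^-$-tile together with the vertex $Xn$ of some $T_n^+$-tile, with $X$ not on the strip boundary; nothing in your proposal excludes this, and excluding it is the entire content of the proposition. Your remark that ``$\xi_n$ is the clockwise-extremal generator, so $D_{T_n^+}$ sits entirely to the $n$-side'' is not an argument: in a g-tiling the two regions may overlap in the plane anyway, so plane-position statements carry no weight; the work has to be done in the disc/cube.

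The same flaw infects your (T1) check of tile distinctness: vertex disjointness of $T_n^-$ and $T_n^+$ \emph{before} the shift does not survive the shift --- tiles $\tau(X;i,j)\in T_n^-$ and $\tau(Xn;i,j)\in T_n^+$ are vertex disjoint in $T$, yet the contraction would turn them into the identical tile of $T/n$; ruling this out is part of the same missing argument. The ingredient your proof never uses is the one place where $i=n$ (or $i=1$) genuinely differs from a generic label: because $n$ is the maximal label, an $n$-edge of a tile $\tau$ can only be $br(\tau)$ or $\ell t(\tau)$ (this is recorded at the start of Subsection~\ref{ssec:contr_Gamma}), so tile heights are weakly monotone along $Q_n$ and its boundary paths $R_{Q_n},L_{Q_n}$ are staircase-like; it is this monotone structure --- together with statements in the spirit of Proposition~\ref{pr:4edges}(i), that nonterminal $X$ and $Xn$ are always joined by an $n$-edge and hence both lie on the strip boundaries --- that pins every coincidence to the seam. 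Any correct proof must invoke this extremality in an essential way. A secondary slip: the $1$-contraction is \emph{not} the $n$-contraction of the reversed tiling $T^{rev}$; reversal (complementation) preserves each label and each strip, so $(T^{rev})/n$ is again an $n$-contraction. The correct symmetry is the mirror reflection $i\mapsto n-i+1$, exactly as the paper states when relating $1$-expansions to $n$-expansions.
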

(If wished, labels $2,\ldots,n$ for $T/1$ can be renamed as
$1',\ldots,(n-1)'$.) We will use the $n$- and 1-contraction operations in
Sections~\ref{sec:proof} and~\ref{sec:2posets}.\medskip

\noindent \textbf{C.} \emph{Legal paths and strip expansions.} \smallskip

Next we describe the $n$-expansion and 1-expansion operations; they are
converse, in a sense, to the $n$-contraction and 1-contraction ones,
respectively. We start with introducing the operation for $n$.

The $n$-{\em expansion} operation applies to a g-tiling $T$ on the
zonogon $Z=Z_{n-1}$ generated by $\xi_1,\ldots,\xi_{n-1}$ and to a
simple (not necessarily directed) path $P$ in the graph $G_{T}$
beginning at the minimal vertex $z_0$ and ending at the maximal vertex
$z^\ell_{n-1}$ of $Z$. Then $\sigma(P)$ subdivides the disc $D_T$ into
two simply connected closed regions $D',D''$ such that: $D'\cup
D''=D_T$, ~$D'\cap D''=\sigma(P)$, ~$D'$ contains $\sigma(\ell bd(Z))$,
and $D''$ contains $\sigma(rbd(Z))$. Let $T':=\{\tau\in T\colon
\sigma(\tau)\subset D'\}$ and $T'':=T-T'$. We disconnect $D',D''$ along
$\sigma(P)$ by shifting $D''$ by the vector $\eps_n$, and then connect
them by adding the corresponding strip of $\ast n$-tiles.

More precisely, we construct a collection $\tilde T$ of tiles on the zonogon
$Z_n$, called the \emph{$n$-expansion of $T$ along $P$}, as follows. The tiles
of $T'$ are kept and each tile $\tau(X;i,j)\in T''$ is replaced by
$\tau(Xn;i,j)$; the white/black coloring on these tiles is inherited. For each
edge $e=(X,Xi)$ of $P$, we add tile $\tau(X;i,n)$, making it white if $e$ is
forward, and black if $e$ is backward in $P$. The resulting $\tilde T$ need not
be a g-tiling in general; for this reason, we impose additional conditions on
$P$.
\medskip

\noindent \textbf{Definition 5} ~ $P$ as above is called an
$n$-{\em legal path} if it satisfies the following three
conditions:

(i) all vertices of $P$ are nonterminal;

(ii) $P$ contains no pair of consecutive backward edges;

(iii) for an $i$-edge $e$ and a $j$-edge $e'$ such that $e,e'$ are consecutive
edges occurring in this order in $P$: if $e$ is forward and $e'$ is backward in
$P$, then $i>j$, and if $e$ is backward and $e'$ is forward, then $i<j$.
\medskip

In view of~(ii), $P$ is represented as the concatenation of
$P_1,\ldots,P_{n-1}$, where $P_h$ is the maximal subpath of $P$ whose edges
connect levels $h-1$ and $h$ (i.e. are of the form $(X,Xi)$ with $|X|=h-1$). In
view of~(iii), each path $P_h$ has planar embedding in $Z$; it either contains
only one edge, or is viewed as a \emph{zigzag} path going \emph{from left to
right}. The first and last vertices of these subpaths are called {\em critical}
vertices of $P$. The importance of legal paths is seen from the following
  \begin{prop} \label{pr:legal}
~The $n$-expansion of $T$ along $P$ is a (feasible) g-tiling on the zonogon
$Z_n$ if and only if $P$ is an $n$-legal path.
  \end{prop}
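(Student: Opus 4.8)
The plan is to verify directly that the $n$-expansion $\tilde T$ of $T$ along $P$ satisfies axioms (T1)--(T4), and to track which of these force conditions (i)--(iii) of Definition~5. The starting observation is that the newly inserted $\ast n$-tiles are precisely the $n$-strip $Q_n$ of $\tilde T$ in the sense of Definition~3: by construction their $i$-edges (the edges $b\ell$ and $rt$) form, on one side, the copy of $\sigma(P)$ sitting in $D'$ and, on the other side, its $\eps_n$-shift bounding $D''-\eps_n$, so that $R_{Q_n}=\sigma(P)$ and $L_{Q_n}=\sigma(P)+\eps_n$. The coloring rule of the construction then reads: a strip tile is white (resp.\ black) exactly when the corresponding edge of $P$ is forward (resp.\ backward). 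I would split the verification into the ``topological'' axioms (T1) and (T4), which I expect to hold for an arbitrary simple path $P$, and the ``colour'' axioms (T2) and (T3), which I expect to be equivalent to legality.

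For (T1), the only edges of $\tilde T$ not already present in $T'$ or in the shifted copy of $T''$ are the $n$-edges $br,\ell t$ of the strip tiles together with the duplicated copy of $\sigma(P)$; since $P$ is simple, no edge is traversed twice, so each interior edge lies in exactly two tiles and each of the two new boundary $n$-edges $z^\ell_{n-1}z^\ell_n$ and $z^r_nz^r_{n-1}$ lies in exactly one (the first, resp.\ last, strip tile), while all tiles are distinct. For (T4), $\sigma(P)$ is a simple arc joining the two boundary vertices $z_0$ and $z^\ell_{n-1}$ of the disc $D_T$, hence it cuts $D_T$ into two discs $D',D''$; shifting $D''$ by $\eps_n$ and filling the gap by the strip (itself a disc, glued to $D'$ and to the shifted $D''$ along the two arcs $\sigma(P)$ and $\sigma(P)+\eps_n$) again yields a disc. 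Neither argument uses the colours, so (T1) and (T4) hold for every simple $P$.

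The substance of the proposition is that (T2) and (T3) hold if and only if (i)--(iii) do, and this I would prove vertex by vertex using the local structure supplied by Corollaries~\ref{cor:termv} and~\ref{cor:ord-mixed}. For condition~(ii): a backward edge produces a black strip tile whose bottom vertex $X$ is, by~(T3), terminal, so every edge at $X$ must leave it; since the following edge of $P$ is incident to $X$ it is forced to be forward, which is exactly the prohibition of two consecutive backward edges. Conversely, two consecutive backward edges produce two black tiles sharing the $n$-edge at their common vertex, violating the clause of~(T2) forbidding black tiles to share an edge. For condition~(i): at a terminal vertex $v$ of $T$ one has $\rho(v)=0$ and, by Corollary~\ref{cor:termv}, all edges of $E_T(v)$ are squeezed between the two black edges through $v$; there is then no admissible slot in which to insert the new $n$-edge of $P$ without either joining $v$ to another terminal vertex or creating a second black tile at $v$, contradicting Corollary~\ref{cor:termv}(i) or~(T3). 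Hence every vertex of $P$ must be nonterminal.

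Condition~(iii) is the delicate one, and I expect the matching of its label inequalities to the cone-order of Corollary~\ref{cor:ord-mixed} to be the main obstacle. At a critical (turning) vertex $v$ where a forward $i$-edge meets a backward $j$-edge (an FB turn), the two strip tiles---one white, one black---share the $n$-edge leaving $v$ and both have $v$ as left vertex; by~\refeq{nontermv} the black one must lie inside the cone of the white one, and a direct inspection of the clockwise order $e_1,\dots,e_p$ of edges leaving $v$ shows this happens precisely when $i>j$. The symmetric analysis at a BF turn, where $v$ becomes the bottom of the black tile and~(T3) demands that every edge leave $v$, yields the inequality $i<j$; here one checks that legality lets the path separate the leaving edges from the entering ones exactly as prescribed by Corollary~\ref{cor:ord-mixed}(i)--(v), so that the resulting vertex is a clean terminal, ordinary, or mixed vertex of $\tilde T$. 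Running this local check through all vertex types (interior, boundary, and the endpoints $z_0,z^\ell_{n-1}$) establishes both implications, and together with (T1) and (T4) completes the equivalence; the only-if direction is obtained by reading each of these local arguments backwards, since failure of any one of (i)--(iii) pinpoints a vertex at which (T2) or (T3) breaks.
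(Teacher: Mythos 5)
First, a point of reference: the paper itself never proves Proposition~\ref{pr:legal} --- Subsection~3.3 states explicitly that Propositions~3.4--3.9 are results imported from~\cite{DKK-09} --- so your attempt has to be judged directly against axioms (T1)--(T4) rather than against an in-paper argument. Your decomposition is the natural one, and a good part of it is sound: (T1) and (T4) do hold for an \emph{arbitrary} simple path $P$ from $z_0$ to $z^\ell_{n-1}$ (the colours never enter, exactly as you say), and your arguments for the \emph{necessity} of (i)--(iii) are essentially correct. A terminal vertex on $P$, a pair of consecutive backward edges, or a turn with the wrong label inequality each yield a configuration contradicting (T2), (T3), or the derived properties (Corollary~\ref{cor:termv}, \refeq{nontermv}); in particular your observation that at an FB turn both strip tiles have $v$ as left vertex and that the cone containment of \refeq{nontermv} holds precisely when $i>j$ is correct.

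The genuine gap is the sufficiency half (``$P$ legal $\Rightarrow$ $\tilde T$ is a g-tiling''), which you dispose of with ``running this local check \ldots establishes both implications.'' Your local arguments all run in one direction: assuming $\tilde T$ is a g-tiling (so that (T3), Corollary~\ref{cor:termv} and \refeq{nontermv} are available), you derive (i)--(iii). Reading them ``backwards'' only shows that the particular violations you exhibited are absent; it does not verify every clause of (T2)--(T3) for every tile pair and vertex of $\tilde T$. Concretely, two required checks never appear. First, (T2) must be verified for a \emph{strip tile and a tile of $T$} sharing an edge of $P$ (or its shift by $\xi_n$) --- you only ever test strip tiles against each other. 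Here one needs the preliminary observation that condition (i) forces every tile of $T$ containing an edge of $P$ to be \emph{white} (every side of a black tile has a terminal endpoint), after which a sidedness argument finishes: along a forward edge the surviving $D'$-tile and the white strip tile lie on opposite sides of the line through that edge, hence do not overlap, while along a backward edge they lie on the same side, hence overlap with the black strip tile --- exactly what (T2) demands; the same whiteness observation is what guarantees that (T3) at the black tiles of $T$ survives the expansion. Second, the positive role of the inequalities in (iii) is to fix the $D'/D''$ side assignment at a turn vertex $v$: precisely when (iii) holds, the tiles of $T$ remaining at $v$ (those in $D'$) are exactly the ones in the upward cone between the two $P$-edges, so all their edges leave $v$ and none of them can be black (a black tile at a nonterminal vertex spans a leaving and an entering edge, so its cone contains a near-horizontal direction), and dually at $vn$ for the tiles moved into $D''$. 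This computation --- where (iii) actually gets used in the ``if'' direction --- is asserted (``legality lets the path separate the leaving edges from the entering ones'') but never carried out. So the architecture is right, but roughly half of the stated equivalence, and in fact the half the paper relies on in Section~\ref{sec:proof}, is not proved.
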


Under the $n$-expansion operation, the path $P$ generates the $n$-strip
$Q_n$ of the resulting g-tiling $\tilde T$; more precisely, the right
boundary of $Q_n$ is the reversed path $P^{rev}$ to $P$, and the left
boundary of $Q_n$ is obtained by shifting $P^{rev}$ by $\xi_n$. A
possible fragment of $P$ consisting of three consecutive edges
$e,e',e''$ forming a zigzag and the corresponding fragment in $Q_n$
(with two white tiles created from $e,e''$ and one black tile created
from $e'$) are illustrated in the picture; here the shifted $e,e',e''$
are indicated with tildes.

 \begin{center}
  \unitlength=1mm
  \begin{picture}(145,25)
   \put(5,0){\begin{picture}(60,20)
  \put(0,5){\circle*{1}}
  \put(12,17){\circle*{1}}
  \put(24,5){\circle*{1}}
  \put(32,17){\circle*{1}}
  \put(0,5){\vector(1,1){11.5}}
  \put(24,5){\vector(-1,1){11.5}}
  \put(24,5){\vector(2,3){7.6}}
  \put(9,8){\vector(1,3){2.5}}
  \put(15,8){\vector(-1,3){2.5}}
  \put(24,5){\vector(-1,3){3}}
  \put(24,5){\vector(0,1){9}}
  \put(3,11){$e$}
  \put(16.5,13){$e'$}
  \put(29,10){$e''$}
  \put(45,10){\vector(1,0){9.7}}
    \end{picture}}
%
   \put(65,0){\begin{picture}(80,25)
  \put(0,0){\circle*{1}}
  \put(12,12){\circle*{1}}
  \put(24,0){\circle*{1}}
  \put(32,12){\circle*{1}}
  \put(48,12){\circle*{1}}
  \put(60,24){\circle*{1}}
  \put(72,12){\circle*{1}}
  \put(80,24){\circle*{1}}
  \put(0,0){\vector(1,1){11.5}}
  \put(24,0){\vector(2,3){7.6}}
  \put(48,12){\vector(1,1){11}}
  \put(72,12){\vector(2,3){7.6}}
  \put(0,0){\vector(4,1){47.5}}
  \put(32,12){\vector(4,1){47.5}}
  \put(57,15){\vector(1,3){2.5}}
  \put(63,15){\vector(-1,3){2.5}}
  \put(24,0){\vector(-1,3){3}}
  \put(24,0){\vector(0,1){9}}

\thicklines{
  \put(24,0){\vector(-1,1){11.5}}
  \put(72,12){\vector(-1,1){11.5}}
  \put(12,12){\vector(4,1){47.5}}
  \put(24,0){\vector(4,1){47.5}}
 }
  \put(23,-1){$\blacklozenge$}
  \put(59,23){$\blacklozenge$}
  \put(3,6){$e$}
  \put(17,8){$e'$}
  \put(27.5,3.5){$e''$}
  \put(52,13){$\tilde e$}
  \put(68.5,16){$\tilde e'$}
  \put(76,15){$\tilde e''$}
  \put(50,0){$\xi_n$}
  \put(55,2){\vector(4,1){10}}
    \end{picture}}
  \end{picture}
   \end{center}

The $n$-contraction operation applied to $\tilde T$ returns the initial $T$. A
relationship between $n$-contractions and $n$-expansions is described in the
following
  \begin{prop}  \label{pr:TP-T}
~The correspondence $(T,P)\mapsto \tilde T$, where $T$ is a g-tiling on
$Z_{n-1}$, ~$P$ is an $n$-legal path for $T$, and $\tilde T$ is the
$n$-expansion of $T$ along $P$, gives a bijection between the set of
such pairs $(T,P)$ and the set of g-tilings on $Z_n$.
  \end{prop}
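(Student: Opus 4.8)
The plan is to exhibit an explicit two-sided inverse to the expansion correspondence $\Phi\colon (T,P)\mapsto \tilde T$, built from the $n$-contraction of Proposition~\ref{pr:contract} together with the boundary description of the $n$-strip in Proposition~\ref{pr:strips}. First note that $\Phi$ is well-defined into the set of g-tilings on $Z_n$: this is exactly the ``if'' part of Proposition~\ref{pr:legal}. For the reverse map, given a g-tiling $S$ on $Z_n$, let $Q_n$ be its (unique, by Proposition~\ref{pr:strips}) $n$-strip, let $R_{Q_n}$ be the right boundary of $Q_n$, and set $P_S:=(R_{Q_n})^{rev}$. I would define $\Psi(S):=(S/n,\,P_S)$. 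By Proposition~\ref{pr:contract}, $S/n$ is a g-tiling on $Z_{n-1}$; and by Proposition~\ref{pr:strips} the path $R_{Q_n}$ runs from $z^\ell_{n-1}$ to $z_0$ and is simple and disjoint from $L_{Q_n}$, so $P_S$ is a simple path in $G_{S/n}$ from $z_0$ to $z^\ell_{n-1}$ (its edges and vertices lie on the $T^-_n$ side of the strip, hence survive the contraction unchanged). Thus $P_S$ is a legitimate candidate for an $n$-expansion path for $S/n$, pending legality.

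For the composite $\Psi\circ\Phi$: given an $n$-legal pair $(T,P)$, the $n$-expansion $\tilde T=\Phi(T,P)$ has, by construction, an $n$-strip $Q_n$ whose right boundary is $P^{rev}$. Applying the $n$-contraction to $\tilde T$ recovers $T$ (this is the remark following Proposition~\ref{pr:legal}: contraction undoes expansion, since the kept tiles of $T'$ and the shifted-back tiles of $T''$ reassemble exactly the tiles of $T$). The recovered path is $(R_{Q_n})^{rev}=(P^{rev})^{rev}=P$. Hence $\Psi(\Phi(T,P))=(T,P)$.

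For the composite $\Phi\circ\Psi$: given a g-tiling $S$ on $Z_n$, I would check directly that expanding $S/n$ along $P_S$ reconstructs $S$ tile by tile. Partition $S$ into $T^0_n$ (the tiles of $Q_n$), $T^-_n$, and $T^+_n$ as in the contraction. The contraction keeps $T^-_n$ and shifts $T^+_n$ by $-\eps_n$; the expansion along $P_S$ keeps the former group and shifts the latter back by $+\eps_n$, so these two classes are returned verbatim. The strip $T^0_n$ is rebuilt from $P_S$: each edge $e=(X,Xj)$ of $P_S=(R_{Q_n})^{rev}$ corresponds to the right-boundary edge of a unique $\ast n$-tile $\tau(X;j,n)\in Q_n$, and the expansion recreates exactly this tile, with the correct colour because the forward/backward status of $e$ in $P_S$ matches the white/black type prescribed in Proposition~\ref{pr:strips} for $i=n$ (where no $n\ast$-tiles occur, so a right-boundary edge is forward iff its tile is black). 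Therefore $\Phi(\Psi(S))=S$ as a collection of coloured tiles. Since $S$ is a g-tiling, the ``only if'' part of Proposition~\ref{pr:legal} now applies to the $n$-expansion $\Phi(S/n,P_S)=S$ and forces $P_S$ to be $n$-legal; this yields the legality of $P_S$ for free and confirms that $\Psi$ lands in the domain of $\Phi$.

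Combining the two composites, $\Phi$ and $\Psi$ are mutually inverse, proving the asserted bijection. The main obstacle is the tile-by-tile verification $\Phi(\Psi(S))=S$: one must confirm that the contraction discards no information beyond the strip and that the right-boundary path $P_S$, together with the forward/backward labelling of its edges, records enough to regenerate every tile of $Q_n$ with its correct label pair and colour. This requires the precise correspondence between edges of $R_{Q_n}$ and the $\ast n$-tiles of $S$ supplied by Proposition~\ref{pr:strips}, and a careful check that the merging of $\sigma(R_{Q_n})$ with the shifted $\sigma(L_{Q_n})$ under contraction is undone cleanly by the re-shift in the expansion.
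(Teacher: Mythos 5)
Your argument is sound, but there is nothing in this paper to compare it against: Proposition~\ref{pr:TP-T} is quoted from~\cite{DKK-09} without proof, the paper only recording as surrounding remarks the two facts you also invoke (that the $n$-expansion of $(T,P)$ has its $n$-strip right boundary equal to $P^{rev}$, and that $n$-contraction undoes $n$-expansion). What you have done is assemble those quoted ingredients into an actual bijection proof, and the assembly works: $\Phi$ is well defined by the ``if'' part of Proposition~\ref{pr:legal}; $\Psi(S)=(S/n,(R_{Q_n})^{rev})$ makes sense by Propositions~\ref{pr:contract} and~\ref{pr:strips}; $\Psi\circ\Phi=\mathrm{id}$ follows from the two remarks; $\Phi\circ\Psi=\mathrm{id}$ follows from your tile-by-tile check, where the colour bookkeeping is indeed correct (for $i=n$ there are no $n\ast$-tiles, so by Proposition~\ref{pr:strips} an edge of $R_{Q_n}$ is forward iff its strip tile is black, hence an edge of $P_S=(R_{Q_n})^{rev}$ is forward iff its tile is white, exactly the expansion rule); and deducing $n$-legality of $P_S$ from the ``only if'' part of Proposition~\ref{pr:legal}, \emph{after} establishing $\Phi(\Psi(S))=S$, is a legitimate and economical shortcut that avoids verifying conditions (i)--(iii) of Definition~5 directly.

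One justification should be repaired, though it does not affect the conclusion. The edges of $R_{Q_n}$ do not all ``lie on the $T^-_n$ side'' in the sense of belonging to tiles of $T^-_n$: an edge $a_p$ of $R_{Q_n}$ lying on $\ell bd(Z_n)$ belongs, by axiom (T1), to its strip tile only. Such an edge survives the contraction not via $T^-_n$ but because the contraction merges $\sigma(L_{Q_n})$ onto $\sigma(R_{Q_n})$: the unique tile of $T^+_n$ attached to the matching edge $a_p+\xi_n$ of $L_{Q_n}$ is shifted by $-\eps_n$ and then contains $a_p$. With that repair, every edge of $P_S$ is covered by a tile of $S/n$ (either the second $S$-tile at $a_p$, which is forced into $T^-_n$ since its endpoints omit $n$ and since two distinct strip tiles cannot share a non-$n$-edge, or the shifted $T^+_n$-tile just described), so $P_S$ is indeed a simple path in $G_{S/n}$ from $z_0$ to $z^\ell_{n-1}$, as your definition of $\Psi$ requires.
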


In its turn, the 1-{\em expansion operation} applies to a g-tiling $T$ on the
zonogon $Z$ generated by the vectors $\xi_2,\ldots,\xi_n$ (so we deal with
labels $2,\ldots,n$) and to a simple path $P$ in $G_T$ from the minimal vertex
to the maximal vertex of $Z$; it produces a g-tiling $\tilde T$ on $Z_n$. This
is equivalent to applying the $n$-expansion operation in the mirror-reflected
situation: when label $i$ is renamed as label $n-i+1$ (and accordingly a tile
$\tau(X;i,j)$ in $T$ is replaced by the tile $\tau(\{k\colon n-k+1\in
X\};n-j+1,n-i+1)$, preserving the basic vectors $\xi_1,\ldots,\xi_n$). The
corresponding ``1-analogues'' of the above results on $n$-expansions are as
follows.
  \begin{prop} \label{pr:1expan}
~{\rm (i)} The 1-expansion $\tilde T$ of $T$ along $P$ is a g-tiling on
$Z_n$ if and only if $P$ is a \emph{1-legal path}, which is defined as
in Definition~5 with the only difference that each subpath $P_h$ of $P$
(formed by the edges connecting levels $h-1$ and $h$) either contains
only one edge, or is a zigzag path going \emph{from right to left}.

{\rm (ii)} The 1-contraction operation applied to $\tilde T$ returns the
initial $T$.

{\rm(iii)} The correspondence $(T,P)\mapsto \tilde T$, where $T$ is a g-tiling
on the zonogon generated by $\xi_2,\ldots,\xi_n$, ~$P$ is a 1-legal path for
$T$, and $\tilde T$ is the 1-expansion of $T$ along $P$, gives a bijection
between the set of such pairs $(T,P)$ and the set of g-tilings on $Z_n$.
  \end{prop}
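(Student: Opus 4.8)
The statement to prove is Proposition~\ref{pr:1expan}, which transfers the three properties established for $n$-expansions (Propositions~\ref{pr:legal}, \ref{pr:contract}, and \ref{pr:TP-T}) to the case of $1$-expansions. The plan is to avoid re-deriving everything from scratch and instead exploit the mirror-reflection symmetry already announced in the text immediately preceding the statement.

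First I would make the reflection map precise. Define $r\colon[n]\to[n]$ by $r(i)=n-i+1$, and let $R$ denote the induced involution on tiles sending $\tau(X;i,j)$ to $\tau(\{k\colon r(k)\in X\};r(j),r(i))$ (note $i<j$ forces $r(j)<r(i)$, so this is again a legitimately labeled tile). I would first verify that $R$ carries g-tilings on the zonogon generated by $\xi_2,\ldots,\xi_n$ (with labels $2,\ldots,n$) to g-tilings on the zonogon generated by $\xi_1,\ldots,\xi_{n-1}$ (with labels $1,\ldots,n-1$), preserving the white/black coloring. This amounts to checking that axioms (T1)--(T4) are invariant under $R$: (T1)--(T3) are local edge/tile incidence conditions that are clearly symmetric under relabeling and complementation, and (T4) is a topological condition on $D_T$ that $R$ preserves because $R$ acts as an affine involution on the cube $conv(2^{[n]})$, sending the surface $D_T$ to a homeomorphic surface. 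The key bookkeeping point is that $R$ interchanges the roles of label $1$ and label $n$: an $\ast n$-tile maps to a $1\ast$-tile, so the $n$-strip $Q_n$ maps to the $1$-strip $Q_1$, and the $n$-contraction $T/n$ corresponds under $R$ to the $1$-contraction of $R(T)$.

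With this dictionary in place, each of the three assertions follows by transport of structure. For (ii), the $1$-contraction of $\tilde T$ equals $R^{-1}$ applied to the $n$-contraction of $R(\tilde T)$; since $R(\tilde T)$ is the $n$-expansion of $R(T)$ along $R(P)$ (by the defining formulas), the $n$-contraction returns $R(T)$ by the sentence following Proposition~\ref{pr:legal}, and applying $R^{-1}$ yields $T$. For (i), I would check that $P$ is a $1$-legal path for $T$ if and only if $R(P)$ is an $n$-legal path for $R(T)$: conditions~(i) and~(ii) of Definition~5 are symmetric, while condition~(iii) reverses because $R$ reverses the linear order on labels, so the inequality $i>j$ becomes $r(i)<r(j)$; this is exactly why the zigzag in a $1$-legal path runs \emph{right to left} rather than left to right. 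Proposition~\ref{pr:legal} applied to $R(T)$ and $R(P)$ then gives that $R(\tilde T)$ is a g-tiling iff $R(P)$ is $n$-legal, which by the equivalence just noted is iff $P$ is $1$-legal; since $R$ preserves the class of g-tilings, $\tilde T$ is a g-tiling iff $P$ is $1$-legal. For (iii), the bijection $(T,P)\mapsto\tilde T$ of Proposition~\ref{pr:TP-T} conjugated by $R$ on all three coordinates gives the desired bijection, because $R$ is itself a bijection on each of the relevant sets (g-tilings on the two zonogons, and legal paths).

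The main obstacle is not conceptual but rather the careful verification that the reflection map behaves correctly on all the structural data simultaneously --- in particular, confirming that $R$ sends the boundary vertices $z_0,z^\ell_{n-1}$ of the source zonogon to the corresponding extreme vertices of the target, so that a path $P$ from minimal to maximal vertex maps to a path $R(P)$ of the same type, and that the forward/backward orientation of edges is correctly swapped so that the white/black assignment in the expansion is consistent. Once the claim ``$R(\tilde T)$ is the $n$-expansion of $R(T)$ along $R(P)$'' is checked directly from the construction formulas (tiles of $T'$ kept, tiles of $T''$ shifted, added tiles $\tau(X;i,n)$ colored by orientation), everything else is a formal consequence of the already-proved $n$-expansion propositions, and no genuinely new combinatorial argument is required.
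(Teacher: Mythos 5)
Your proposal is correct and is essentially the paper's own argument: the paper proves nothing new here but reduces the $1$-expansion statements to Propositions~\ref{pr:contract}--\ref{pr:TP-T} via exactly the mirror-reflection/relabeling $i\mapsto n-i+1$ that you formalize as $R$, with the reversal of the label order accounting for the right-to-left zigzag in the definition of $1$-legal paths. (One cosmetic slip: your verification of (T1)--(T3) mentions ``complementation,'' but $R$ involves only the relabeling of elements, no set complementation; this does not affect the argument.)
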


\noindent \textbf{D.} \emph{Principal trees.} \smallskip

Let $T$ be a g-tiling on $Z=Z_n$. We distinguish between two sorts of
white edges $e$ of $G_T$ by saying that $e$ is \emph{fully white} if
both of its end vertices are nonterminal, and \emph{semi-white} if one
end vertex is terminal. (Recall that an edge $e$ of $G_T$ is called
white if no black tile contains $e$ (as a side edge); the case when
both ends of $e$ are terminal is impossible, cf.
Corollary~\ref{cor:termv}(i).) In particular, all boundary edges of $Z$
are fully white.

The following result on structural features of the set of white edges is
obtained by using Corollary~\ref{cor:ord-mixed}.
   \begin{prop} \label{pr:Hh}
~For $h=1,\ldots,n$, let $H_h$ denote the subgraph of $G_T$ induced by the set
of white edges connecting levels $h-1$ and $h$ (i.e. of the form $(X,Xi)$ with
$|X|=h-1$). Then $H_h$ is a forest. Furthermore:

{\rm (i)} there exists a component (a maximal tree) $K_h$ of $H_h$ that
contains all fully white edges of $H_p$ (in particular, the boundary edges
$z^\ell_{h-1}z^\ell_h$ and $z^r_{n-h+1}z^r_{n-h}$) and no other edges;
moreover, $K_h$ has planar embedding in $Z$;

{\rm (ii)} any other component $K'$ of $H_h$ contains exactly one terminal
vertex $v$; this $K'$ is a star at $v$ whose edges are the (semi-)white edges
incident to $v$.
   \end{prop}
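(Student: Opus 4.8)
The plan is to read off the structure of $H_h$ from the purely local description of a g-tiling at a single vertex, and then to control the global shape (acyclicity and the single big component) by the disc property (T4). Throughout, note that $H_h$ is bipartite between the level-$(h-1)$ vertices and the level-$h$ vertices, and that a white edge is \emph{semi-white} exactly when it is incident to a terminal vertex (a white edge cannot join two terminal vertices, by Corollary~\ref{cor:termv}(i)); all other white edges of the gap are \emph{fully white}. The two facts I shall use repeatedly are: at a nonterminal vertex the white edges of a fixed level-gap form a contiguous fan whose consecutive pairs span white tiles (Corollary~\ref{cor:ord-mixed}(ii),(iii)), and at a terminal vertex all white edges are semi-white and lie strictly between the two black edges (Corollary~\ref{cor:termv}(ii)).

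First I would settle part (ii). Let $v$ be terminal; by passing to $T^{rev}$ if necessary I may assume $v$ is the bottom of a black tile, so by Corollary~\ref{cor:termv}(iii) every edge of $E_T(v)$ leaves $v$, whence $v$ is the \emph{bottom} of every tile in $F_T(v)$ and each white edge at $v$ runs up into the gap $(|v|,|v|+1)$, i.e.\ into $H_h$ with $h=|v|+1$. For such a white neighbour $w$ (necessarily nonterminal) I claim $\deg_{H_h}(w)=1$: if $w$ had a second entering white edge, then by Corollary~\ref{cor:ord-mixed}(iii) the edge $vw$ would be spanned together with an adjacent entering white edge by a white tile having top $w$, making $v$ a \emph{side} vertex of that tile --- impossible, as $v$ is a bottom vertex of all its tiles. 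Hence the component of $v$ in $H_h$ is exactly the star formed by its semi-white edges, which is (ii); moreover every semi-white edge occurs in such a star, so the components other than these stars use only fully white edges.

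It remains to show these fully white edges constitute a single tree $K_h$ containing the two boundary edges $z^\ell_{h-1}z^\ell_h$ and $z^r_{n-h+1}z^r_{n-h}$ (both fully white, since boundary vertices are nonterminal). I would do this by a planar-duality argument on the disc $D_T$. Let $\lambda(x)=\sum_i x_i$ be the height functional and consider the slice $L:=D_T\cap\{\lambda=h-\tfrac12\}$ of the disc: it meets each interface edge of the gap in one point and crosses each interface tile in one segment joining the midpoints of its two interface edges. Thus $L$ is a $1$-manifold whose nodes (edge-midpoints) have degree $1$ on the two boundary edges and degree $2$ otherwise (by (T1)), and which meets $\partial D_T$ in exactly the two points where $bd(Z)$ crosses the level; hence $L$ is one arc $P_0$ (from one boundary-edge midpoint to the other) together with finitely many circles. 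Using (T2) and Corollary~\ref{cor:termv}(ii), the white fan around each interface terminal vertex together with the segment through the adjacent black tile closes up into one circle of $L$ --- the dual of a star $K'$ --- so the black and semi-white edges account precisely for the circles, and $P_0$ threads exactly the fully white edges. Reading $P_0$ as a strip of white tiles, consecutive fully white edges along it share the bottom (or top) vertex of the connecting tile, so $K_h$ is connected and contains both boundary edges; and since $P_0$ is a single simple arc in the disc $D_T$ crossing each fully white edge once and no other edge, $K_h$ can contain no cycle and inherits the planar embedding of $\sigma(G_T)$.

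The main obstacle is precisely this last, global, step. The local analysis above is routine, but ruling out extra circles of $L$ --- equivalently, acyclicity of $K_h$ --- is delicate because under $\sigma$ a black tile carries the \emph{reversed} orientation and its terminal endpoints are cone points with $\rho=0$ (Corollary~\ref{cor:rotation}, Corollary~\ref{cor:termv}(iii)); consequently a naive ``take the highest vertex of the disc bounded by a putative cycle'' argument fails, since such a region could cap off at a terminal top-vertex. The decisive input is axiom (T4): every circle of $L$ bounds a genuine disc in $D_T$, and combining this with the angle values of Corollary~\ref{cor:rotation} forces that disc to enclose exactly one terminal vertex, leaving $P_0$ as the unique arc. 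I expect this topological bookkeeping with black tiles, rather than the fan bookkeeping, to be where the real work lies.
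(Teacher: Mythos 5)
Your local analysis is sound: part (ii) is proved correctly (modulo a citation slip: the fact that every edge at the bottom vertex of a black tile leaves it is axiom (T3), not Corollary~\ref{cor:termv}(iii), which gives $\rho(v)=0$), and the dual slice $L=D_T\cap\{\lambda=h-\tfrac12\}$ is a genuinely good device: it is indeed a compact $1$-manifold with exactly two boundary points, hence one arc $P_0$ plus circles, and each terminal vertex adjacent to the level gap does close up into its own circle, so that black and semi-white edges never lie on $P_0$. But the decisive assertion --- that $L$ has \emph{no other} circles, i.e.\ no circle consisting entirely of white tiles and fully white edges --- is exactly equivalent to the statement being proved (such a circle exists if and only if the fully white edges fail to form a single tree), and you do not prove it: you explicitly defer it (``I expect this \ldots to be where the real work lies''). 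Moreover, the criterion you propose to establish with (T4) and Corollary~\ref{cor:rotation}, namely that every circle of $L$ bounds a disc containing exactly one terminal vertex, could not finish the argument even if proved: the legitimate circles around terminal vertices have precisely that property, so it does not distinguish them from a hypothetical all-white circle; one would still need a separate nesting/innermost-circle or counting argument to reach a contradiction. And, as you yourself observe, the natural maximum-height argument inside the disc bounded by an all-white circle genuinely stalls when the maximal vertex is the top of a black tile (all edges enter it, $\rho=0$); handling that configuration --- where black tiles pair up terminal vertices inside the region and must be accounted for, e.g.\ by some Euler/angle bookkeeping in which black tiles enter negatively --- is the actual mathematical content of the proposition, and it is absent.

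Two further points. First, part (i) also claims that $K_h$ has a planar embedding \emph{in $Z$}, i.e.\ that its edges, drawn as segments in the zonogon, do not cross; inheriting planarity from $\sigma(G_T)$ only gives an embedding in the disc $D_T$, and since $G_T$ itself is generally non-planar in $Z$ (tiles overlap), this claim needs its own argument, which you do not supply. Second, for calibration: the paper does not prove this proposition either --- it imports it from~\cite{DKK-09}, remarking only that it is obtained using Corollary~\ref{cor:ord-mixed} --- so your write-up has to stand on its own, and as it stands the forest and connectivity claims, which are the heart of the statement, are not established.
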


It follows that the subgraph $G^{fw}=G_T^{fw}$ of $G_T$ induced by the
fully white edges has planar embedding in $Z$. We refer to $K_h$ in~(i)
of the proposition as the \emph{principal tree} in $H_h$. The common
vertices of two neighboring principal trees $K_h,K_{h+1}$ will play an
important role later; we call them \emph{critical vertices} for $T$ in
level $h$. \medskip

\noindent \textbf{E.} Two more useful facts (which look so natural but their
proofs are not straightforward) concern relations between vertices and edges in
$G_T$ and tiles in $T$.
  \begin{prop} \label{pr:4edges}
~{\rm(i)} Any two nonterminal vertices of the form $X,Xi$ in $G_T$ are
connected by edge. {\em(Such an edge need not exist when some of $X,Xi$ is
terminal.)}

{\rm(ii)} If four \emph{nonterminal} vertices are connected by four edges
forming a cycle in $G_T$, then there is a tile in $T$ having these vertices and
edges. \emph{(Cf.~\refeq{4edges}.)}
  \end{prop}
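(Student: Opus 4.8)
The plan is to reduce both parts to a single local principle about the projection $\pi$ and then to read off the desired tiles from Corollary~\ref{cor:ord-mixed}. The first thing I would record is an observation that ties the hypotheses together: \emph{every edge of $G_T$ whose two endpoints are both nonterminal is fully white}. Indeed, a black edge is a side of some black tile, and by axiom~(T3) together with Corollary~\ref{cor:termv} each side of a black tile is incident to that tile's bottom or top, both of which are terminal. Hence in both parts all edges under consideration are fully white, and the statements really concern white tiles and the fully-white subgraph, which embeds planarly in $Z$ (the remark after Proposition~\ref{pr:Hh}). The key tool I would then establish is a \emph{covering principle}: if $v$ is nonterminal, then $\pi(v)$ does not lie in the open interior of any tile of $T$. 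To prove it I would apply Corollary~\ref{cor:ord-mixed} to list $F_T(v)$ — the white tiles with bottom at $v$ sweep the upper cone between the extreme white leaving edges, those with top at $v$ sweep the lower cone, and by \refeq{nontermv} every black tile of $F_T(v)$ is folded inside the cone of the white tile it shares an edge with — and combine this with the non-overlap clause~(c) of Corollary~\ref{cor:ord-mixed} and the angle count $\rho(v)=2\pi$ (Corollary~\ref{cor:rotation}) to conclude that the white tiles of $F_T(v)$ already cover a full planar neighborhood of $\pi(v)$ exactly once; invoking the overlap structure of a g-tiling (the only overlapping pairs being a white and a black tile sharing an edge), no further tile can then cover $\pi(v)$. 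The cases $v\in bd(Z)$ and $v\in\{z_0,z_n\}$ would be treated identically using the boundary clauses of Corollaries~\ref{cor:ord-mixed} and~\ref{cor:rotation}.

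For part~(i), suppose $X$ and $Xi$ are nonterminal but no $i$-edge joins them, and assume first that the direction $\xi_i$ points into the fan of white tiles with bottom at $X$. Since $i$ is then not a leaving label at $X$, this direction is interior to some white tile $\tau(X;a,b)$ with $a<i<b$. Writing $\xi_i=\lambda\xi_a+\mu\xi_b$ with $\lambda,\mu>0$ and $\lambda+\mu=1$ (forced by the unit-height normalization), the point $\pi(Xi)=\pi(X)+\xi_i=\lambda\,\pi(Xa)+\mu\,\pi(Xb)$ lies strictly inside $\tau(X;a,b)$ on the mid-level segment joining $\pi(Xa)$ and $\pi(Xb)$. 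This contradicts the covering principle applied to the nonterminal vertex $Xi$. The remaining possibilities, where $\xi_i$ lies off to one side of the bottom-fan, I would dispose of by the same computation using instead the tiles of $F_T(X)$ having $X$ as their right or left vertex (clauses (iv),(v) of Corollary~\ref{cor:ord-mixed}), or, when $X\in bd(Z)$, using the boundary-angle description; in every case $\pi(Xi)$ ends up interior to a tile and the covering principle yields the contradiction.

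For part~(ii), by \refeq{4edges} the four vertices and edges already delimit the geometric parallelogram $\tau(W;i,j)$, so it remains to show this tile belongs to $T$. Since all four edges are fully white, $(W,Wi)$ and $(W,Wj)$ are white edges leaving $W$, and it suffices to prove they are \emph{consecutive} in the clockwise order of edges leaving $W$: then Corollary~\ref{cor:ord-mixed}(ii) produces a white tile with bottom $W$ spanning them, and by \refeq{4edges} (grading) that tile has vertex set $\{W,Wi,Wj,Wij\}$, hence equals $\tau(W;i,j)$, giving $\tau(W;i,j)\in T$. To prove consecutiveness I would argue by contradiction: if some leaving edge $(W,Wk)$ had $i<k<j$, then $\xi_k$ would be interior to the cone of $\tau(W;i,j)$, and rerunning the computation of part~(i) would place the nonterminal vertex $Wij$ (equivalently an intermediate corner) strictly inside a white tile, contradicting the covering principle.

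The main obstacle I anticipate is the covering principle itself, and specifically the global overlap control it rests upon: the local analysis at a single nonterminal vertex through Corollary~\ref{cor:ord-mixed} shows only that the tiles \emph{at} that vertex behave well, whereas to conclude that no other, possibly distant, folded tile reaches $\pi(v)$ one must genuinely use that in a g-tiling the only overlaps are white–black pairs sharing an edge, together with the disc axiom~(T4) and the planarity of $\sigma(G_T)$ in $D_T$. A secondary nuisance is the boundary and mixed-vertex bookkeeping in part~(i), namely the cases where $\xi_i$ does not point into the bottom-fan of $X$ and one must fall back on the left/right-vertex tiles or the boundary clauses.
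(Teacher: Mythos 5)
Your opening observation (an edge of $G_T$ with two nonterminal ends must be fully white, since every side of a black tile contains that tile's bottom or top) is correct, and the reduction of part~(ii) to showing that $(W,Wi)$ and $(W,Wj)$ are consecutive among the edges leaving $W$ is sensible. But the tool on which everything else rests --- your ``covering principle'', that $\pi(v)$ of a nonterminal vertex $v$ never lies in the open interior of a tile --- is false, and the paper's own Figure~\ref{fig:GT} refutes it. There the black tile is $\tau(2;1,4)$, with left vertex $12$ and right vertex $24$, and $23$ is a nonterminal vertex (it belongs to $\mathfrak{S}_T$). Since the base vectors have unit height and occur in clockwise order, $\xi_3=s\xi_1+t\xi_4$ with $s,t>0$ and $s+t=1$; hence the point $23=2+\xi_3$ lies on the open horizontal diagonal of $\tau(2;1,4)$, i.e.\ strictly inside that tile, and this happens for \emph{every} choice of unit-height vectors obeying~\refeq{xi}, not just for the drawing. (With the figure's vectors, $23$ also lies strictly inside the white tile $\tau(4;1,2)$.) Consequently the contradictions you derive in parts~(i) and~(ii) are not contradictions at all, and both arguments collapse.

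The failure traces back to two misreadings of the axioms. First, (T2) constrains only pairs of tiles that \emph{share an edge}; tiles with no common edge may overlap, and do: in Figure~\ref{fig:GT} the white tiles $\tau(1;2,4)$ and $\tau(2;1,3)$ overlap although they share no edge, so the claim that ``the only overlapping pairs are a white and a black tile sharing an edge'' is not available. Second, $\rho(v)=2\pi$ is a \emph{signed} angle count: at a mixed vertex the white tiles of $F_T(v)$ can overlap one another (the cones of $\tau(1;2,4)$ and $\tau(2;1,3)$ at their common left vertex $12$ intersect in a full two-dimensional cone), so they do not cover a neighborhood of $v$ ``exactly once'', and Corollary~\ref{cor:ord-mixed}(c) forbids overlap only for a right-vertex/left-vertex pair. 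More fundamentally, $\pi$ folds the disc $D_T$ onto $Z$ with multiplicity, so the planar position of $\pi(v)$ relative to $\pi(\tau)$ carries no incidence information about $\sigma(v)$ and $\sigma(\tau)$ in $D_T$; any correct proof has to be global. Note also that there is no in-paper proof to compare against: the paper imports Proposition~\ref{pr:4edges} from \cite{DKK-09}, with the explicit warning that these facts ``look so natural but their proofs are not straightforward'' --- a warning your attempt illustrates rather than circumvents.
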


\section{Proof of Theorem~B}  \label{sec:proof}

In this section we explain how to obtain Theorem~B in the assumption of
validity of the following statement, which is interesting by its own right.
Recall that for sets $A,B\subseteq [n]$, we write $A\prec^\ast B$ if $A\lessdot
B$ and $|A|\le|B|$.
  \begin{theorem} \label{tm:prec-lat}
Let $\Cscr$ be a largest ws-collection. Then the partial order on $\Cscr$
determined by $\prec^\ast$ forms a lattice.
  \end{theorem}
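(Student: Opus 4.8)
The plan is to pass to the g-tiling model, reduce the lattice property to a planarity argument about an auxiliary graph, and spend the real effort on identifying that graph's reachability order with $\prec^\ast$. By Theorem~\ref{tm:til-ws} there is a (unique) g-tiling $T$ on $Z_n$ with $\mathfrak{S}_T=\Cscr$, so I identify $\Cscr$ with the set of nonterminal vertices of $G_T$. On this vertex set I introduce a directed graph $\Gamma=\Gamma_T$ whose arcs are the fully white edges $X\to Xi$ (the sides of white tiles joining two nonterminal vertices), together with a \emph{diagonal} arc $\ell(\tau)\to r(\tau)$ for every tile $\tau=\tau(X;i,j)$ whose left vertex $Xi$ and right vertex $Xj$ are both nonterminal. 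Each arc $Y\to Y'$ satisfies $Y\prec^\ast Y'$: a side gives $Y'=Y\cup\{q\}$, while a diagonal gives $Xi\lessdot Xj$ with $|Xi|=|Xj|$ since $i<j$. I write $A\prec_\Gamma B$ when $B$ is reachable from $A$ by a directed path in $\Gamma$.

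To get the lattice property of $\prec_\Gamma$ I first note it is a genuine partial order: the functional $\phi(Y):=M|Y|+x(Y)$, where $x(Y)$ is the horizontal coordinate of the point $Y$ and $M$ is large, strictly increases along every arc (sides raise the level; diagonals preserve the level but move strictly rightward as $j>i$, using that $x(\xi_i)$ is increasing in $i$). Hence $\Gamma$ is acyclic, with least element $\emptyset=z_0$ and greatest element $[n]=z_n$ on $bd(Z)$. Passing to the planar embedding of the fully white subgraph $G_T^{fw}$ in $D_T$ (the diagonals live inside the squares $\sigma(\tau)$, and black tiles are folded behind sharing white tiles by Corollary~\ref{cor:termv} and~\refeq{nontermv}, so they create no new faces), the diagonals split each relevant square into two triangles, so every bounded face is bounded by exactly two directed paths sharing their endpoints ($b\to r$ versus $b\to\ell\to r$, and $\ell\to t$ versus $\ell\to r\to t$). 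For a planar single-source single-sink acyclic digraph all of whose internal faces are of this two-path form, the reachability order is a lattice, with meets and joins read off from the leftmost and rightmost directed paths to common bounds; this yields the lattice property of $\prec_\Gamma$ with little effort.

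It remains to prove $A\prec_\Gamma B\Leftrightarrow A\prec^\ast B$, which transports the lattice property to $\prec^\ast$ (and en route shows $\prec^\ast$ is transitive on $\Cscr$, despite $\lessdot$ failing transitivity in general). For the implication $\Rightarrow$ I would use that each arc is a $\prec^\ast$-step and that the level is nondecreasing along a $\Gamma$-path, so $|A|\le|B|$; and if $A\lessdot B$ failed then, $A$ and $B$ being members of the ws-collection $\Cscr$, one of $B\lessdot A$, $A\rhd B$, $B\rhd A$ would hold, each excluded using $\phi(A)<\phi(B)$ together with the location of the intermediate vertices (so the potential non-transitivity of $\lessdot$ never materialises along an actual path of $\Gamma$). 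The converse $\prec^\ast\subseteq\prec_\Gamma$ is the crux: given $A,B\in\Cscr$ with $A\lessdot B$ and $|A|\le|B|$, I must exhibit a directed $\Gamma$-path from $A$ to $B$. Here I would induct on $n$, peeling off the $n$-strip $Q_n$ and passing to the contraction $T/n$ (Proposition~\ref{pr:contract}), whose spectrum is a largest ws-collection on $[n-1]$ for which the statement holds inductively; the local description of edges and tiles at nonterminal vertices (Corollary~\ref{cor:ord-mixed}) and the principal-tree structure of the white edges (Proposition~\ref{pr:Hh}) are the tools that let me build, step by step, a path that advances from $A$ towards $B$ through sides and diagonals while remaining $\lessdot B$.

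The decisive obstacle is exactly this inclusion $\prec^\ast\subseteq\prec_\Gamma$: one must guarantee that the tiles and fully white edges needed to advance are actually present in $T$, so that a $\phi$-monotone $\Gamma$-path reaches $B$, in spite of overlapping black tiles and the general failure of $\lessdot$ to be transitive. This is the combinatorially heavy part of the argument; everything else reduces either to the cited structural results on g-tilings or to the standard planar-lattice fact invoked above.
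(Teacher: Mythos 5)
Your construction is essentially the paper's own: up to the harmless addition of diagonals of black tiles (which changes nothing in the reachability order, since $r(\tau)$ is already reachable from $\ell(\tau)$ along the diagonals of the white tiles at $b(\tau)$), your graph is exactly the auxiliary graph $\Gamma_T$ of Section~\ref{sec:aux_graph}, and your reduction of the theorem to the equality of $\prec_\Gamma$ and $\prec^\ast$ is exactly Theorem~\ref{tm:2posets}. The genuine gap is that the inclusion $\prec^\ast\subseteq\prec_\Gamma$ --- which you yourself call ``the decisive obstacle'' --- is never proved: you only name the tools you would use. Worse, the inductive scheme you outline (contract the $n$-strip, apply induction in $T/n$, lift the path back) breaks down precisely where the difficulty sits. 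If $|A|=|B|$ and $n\in B-A$ (which is compatible with $A\lessdot B$, e.g.\ $A=\{1,2\}$, $B=\{1,n\}$), then $|B-\{n\}|<|A-\{n\}|$, so $A-\{n\}\prec^\ast B-\{n\}$ fails and the induction hypothesis cannot be invoked at all; the paper is forced to introduce the $1$-contraction for exactly this case (see the beginning of Subsection~\ref{ssec:proof_impl}). Moreover, even when the $n$-contraction does apply, a directed path in $\Gamma_{T/n}$ does not automatically lift to one in $\Gamma_T$: the contraction destroys and creates horizontal edges, and the lifting statements \refeq{separ_paths} and \refeq{H1H2} rest on the whole hammock analysis of Subsections~\ref{ssec:ref_prop}--\ref{ssec:contr_Gamma} (Lemmas~\ref{lm:hor_paths}--\ref{lm:intersect}), none of which your sketch supplies. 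So what you have is a correct reduction plus an accurate diagnosis of where the work lies, not a proof.

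Two further, more local, problems. First, your justification of the lattice property misdescribes the faces of $\sigma(\Gamma)$: the sides of black tiles and the semi-white edges are not arcs of $\Gamma$, so the faces are not triangles obtained by splitting squares; the two-directed-paths structure of faces is true but is itself a nontrivial output of the hammock analysis (cf.~\refeq{cell} and Lemma~\ref{lm:rho-face}). This does not sink the claim, because no face hypothesis is needed: a planar acyclic digraph whose unique source and sink lie on a common face has a lattice reachability order (Lemma~\ref{lm:lattice}), which is how Proposition~\ref{pr:Gamma-lat} is proved. Second, your argument for $\prec_\Gamma\Rightarrow\prec^\ast$ is insufficient as stated: the potential $\phi$ does not by itself exclude the bad relations (for instance $B\lessdot A$ with $|A|<|B|$ is perfectly compatible with $\phi(A)<\phi(B)$); the clean argument is to iterate the transitivity property \refeq{AAA} along the directed path, using that every vertex of the path lies in $\mathfrak{S}_T$ and hence is weakly separated from $A$.
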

Here the fact that $(\Cscr,\prec^\ast)$ is a poset (partially ordered set) is
an immediate consequence of the following simple, but important, property
established in~\cite{LZ}, which describes a situation when the relation
$\lessdot$ becomes transitive:
  \begin{numitem1}
for sets $A,A',A''\subseteq[n]$, if $A\lessdot A'\lessdot A''$, ~$A\wsep A''$
and $|A|\le|A'|\le |A''|$, then $A\lessdot A''$.
  \label{eq:AAA}
  \end{numitem1}

Theorem~\ref{tm:prec-lat} will be proved in Sections~\ref{sec:aux_graph}
and~\ref{sec:2posets}. Relying on this, the method of proving that any
ws-collection $\Cscr\subseteq 2^{[n]}$ is contained in a largest ws-collection
in $2^{[n]}$ (yielding Theorem~B) is roughly as follows. We first reduce
$\Cscr$ to a ws-collection of subsets of $[n-1]$ and then extend the latter to
a maximal ws-collection $\Cscr'$ in $2^{[n-1]}$. One may assume by induction on
$n$ that $\Cscr'$ is a largest ws-collection; so, by Theorem~~\ref{tm:til-ws},
$\Cscr'$ is the spectrum of some g-tiling $T'$ on the zonogon $Z_{n-1}$.
Theorem~\ref{tm:prec-lat} applied to $\Cscr'$ is then used to show the
existence of an appropriate $n$-legal path $P$ for $T'$. Applying the
$n$-expansion operation to $(T',P)$ (as described in
Subsection~\ref{ssec:s-c-e}, part~C), we obtain a g-tiling $T$ on $Z_n$ whose
spectrum contains $\Cscr$, whence the result follows.

We need two lemmas (where we write $X\lessdoteq Y$ if either $X\lessdot Y$ or
$X=Y$).

 \begin{lemma} \label{lm:ABC}
{\rm(i)} Let $A\lessdoteq C$ and $B\lessdoteq C$. Then either $C\subset A\cup
B$ or $A\cup B\lessdoteq C$.

{\rm(ii)} Let $C\lessdoteq A$ and $C\lessdoteq B$. Then $C\lessdoteq A\cup B$.
  \end{lemma}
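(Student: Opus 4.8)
The plan is to prove both parts directly from the definitions in~\refeq{2relat}, exploiting the characterization of $\lessdot$ in terms of the relative order of the symmetric-difference elements. Recall $A\lessdot C$ means that every element of $A-C$ is smaller than every element of $C-A$ (with $C-A\ne\emptyset$), and $X\lessdoteq Y$ allows equality. Throughout I will use the ``max/min'' reformulation: $A\lessdot C$ iff $\max(A-C)<\min(C-A)$ (reading $\max$ of the empty set as $-\infty$ and $\min$ of the empty set as $+\infty$), which makes the combinatorics of intersecting set differences transparent.

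For part~(i), suppose $A\lessdoteq C$ and $B\lessdoteq C$, and assume $C\not\subseteq A\cup B$; I must show $A\cup B\lessdoteq C$. Pick any $j\in C-(A\cup B)$; this witnesses $C-(A\cup B)\ne\emptyset$, so I only need $\max((A\cup B)-C)<\min(C-(A\cup B))$. First I would check the key inequality elementwise: take $i\in (A\cup B)-C$ and $j\in C-(A\cup B)$; without loss of generality $i\in A$ (the case $i\in B$ is symmetric). Then $i\in A-C$ and, since $j\notin A\cup B$ gives $j\in C-A$, the relation $A\lessdot C$ forces $i<j$ (the degenerate case $A=C$ cannot occur here because $C\not\subseteq A\cup B$ rules out $A=C$). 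As $i,j$ were arbitrary this yields $i<j$ for all such pairs, i.e. $(A\cup B)-C\lessdot$-dominates nothing above $C-(A\cup B)$, which is exactly $A\cup B\lessdot C$. The one point demanding care is handling the equality cases $A=C$ or $B=C$ separately, and confirming that when $C\subseteq A\cup B$ fails we genuinely land in the strict relation rather than in equality.

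For part~(ii), suppose $C\lessdoteq A$ and $C\lessdoteq B$; I want $C\lessdoteq A\cup B$. Here the relevant differences are $C-(A\cup B)=(C-A)\cap(C-B)$ and $(A\cup B)-C=(A-C)\cup(B-C)$. If $C\subseteq A\cup B$ then $C-(A\cup B)=\emptyset$, and since $C\lessdoteq A\cup B$ permits the case $C=A\cup B$ as well as $C\lessdot A\cup B$, I should dispose of the possibility $C-(A\cup B)=\emptyset$ by noting it forces $C=A\cup B$ only when additionally $(A\cup B)-C=\emptyset$; otherwise $C\lessdot A\cup B$ holds vacuously on the left. The substantive case is $C-(A\cup B)\ne\emptyset$: take $i\in (A\cup B)-C$, say $i\in A-C$, and $j\in C-(A\cup B)\subseteq C-A$; then $C\lessdot A$ gives $i<j$, and symmetrically for $i\in B-C$. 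Hence $\max((A\cup B)-C)<\min(C-(A\cup B))$, giving $C\lessdot A\cup B$.

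I do not expect a serious obstacle here; both parts reduce to a short elementwise comparison once the symmetric differences are expressed through unions and intersections of the pairwise differences. The only genuine subtlety — and the step I would write out most carefully — is the bookkeeping of the equality cases hidden inside $\lessdoteq$, since the lemma's conclusions are stated with $\lessdoteq$ rather than $\lessdot$, and in part~(i) the hypothesis $C\not\subseteq A\cup B$ is precisely what is needed to exclude the spurious equality and force the intended dichotomy.
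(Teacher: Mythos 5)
Your part~(i) is correct and coincides with the paper's own argument: after disposing of the containment cases, you pick $i\in(A\cup B)-C$ (without loss of generality $i\in A$, so $i\in A-C$) and $j\in C-(A\cup B)\subseteq C-A$, note that the degenerate case $A=C$ is excluded, and read off $i<j$ from $A\lessdot C$; this is exactly what the paper does.

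Part~(ii), as written, is not correct: the inequalities run backwards in two places, and the two reversals happen to cancel. By the definition of $\lessdot$ (elements of $X-Y$ are smaller than elements of $Y-X$ when $X\lessdot Y$), the hypothesis $C\lessdot A$ applied to $j\in C-(A\cup B)\subseteq C-A$ and $i\in A-C$ yields $j<i$, not $i<j$ as you claim. Likewise, the target relation $C\lessdot A\cup B$ means, in your own max/min notation, $\max\big(C-(A\cup B)\big)<\min\big((A\cup B)-C\big)$, which is the reverse of the inequality $\max\big((A\cup B)-C\big)<\min\big(C-(A\cup B)\big)$ that you assert ``gives $C\lessdot A\cup B$''. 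In other words, you have consistently interchanged the roles of the two set differences in the definition of $\lessdot$ throughout part~(ii); because you did so twice, the final claim is the true one, but both written deductions are false as stated. The repair is immediate and is precisely the paper's proof: every $j\in C-(A\cup B)$ satisfies $j<i$ for every $i\in(A\cup B)-C$ (via $C\lessdot A$ when $i\in A-C$, via $C\lessdot B$ when $i\in B-C$), whence $C\lessdot A\cup B$. A second, minor, point: the definition of $C\lessdot A\cup B$ also requires $(A\cup B)-C\ne\emptyset$, which your $\pm\infty$ convention does not record; here it does hold, since $C-(A\cup B)\ne\emptyset$ gives $C\ne A$, hence $C\lessdot A$ and so $A-C\ne\emptyset$.
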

  \begin{proof}
~(i) If $C\subset A\cup B$ or $A\cup B\subseteq C$, we are done. So assume that
both $C-(A\cup B)$ and $(A\cup B)-C$ are nonempty and consider an element $c$
in the former and an element $x$ in the latter of these sets. One may assume
that $x\in A$. Since $x\not\in C$, ~$c\not\in A$, and $A\lessdoteq C$, we have
$x<c$. This implies $A\cup B\lessdoteq C$.

(ii) If $C\subseteq A\cup B$, we are done. So assume this is not the case, and
let $c\in  C-(A\cup B)$. Let $x$ be an element of the set $(A\cup B)-C$ (which
is, obviously, nonempty). One may assume that $x\in A-C$. Then $c\in C-A$ and
$C\lessdoteq A$ imply $c<x$, as required.
  \end{proof}

 \noindent \textbf{Definition 6}
~Let $\Lscr,\Rscr\subseteq 2^{[n']}$. We call $(\Lscr,\Rscr)$ a
\emph{left-right pair}, or, briefly, an \emph{lr-pair}, if $\Lscr\cup\Rscr$ is
a ws-collection and
  \begin{description}
\item[{\rm(LR):}] ~$L\lessdoteq R$ holds for any $L\in \Lscr$ and
$R\in\Rscr$ with $|L|\le |R|$.
  \end{description}

  \begin{lemma} \label{lm:ijkY}
~Let $(\Lscr,\Rscr)$ be an lr-pair in $2^{[n']}$.

{\rm(i)} ~Suppose that there are $X\subseteq [n']$ and $i,j,k\in X$
such that: $i<j<k$, the sets $X-k,X-j$ belong to $\Lscr$, and the sets
$X-j,X-i$ belong to $\Rscr$. Then $(\Lscr\cup\{X\},\Rscr)$ is an
lr-pair as well.

{\rm(ii)} ~Symmetrically, suppose that there are $X\subseteq [n']$ and
$i,j,k\not\in X$ such that: $i<j<k$, the sets $Xi,Xj$ belong to
$\Lscr$, and the sets $Xj,Xk$ belong to $\Rscr$. Then
$(\Lscr,\Rscr\cup\{X\})$ is an lr-pair as well.
  \end{lemma}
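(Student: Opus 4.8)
The plan is to prove part~(i) in full and deduce part~(ii) from it: complementation $S\mapsto[n']\setminus S$ preserves weak separation and reverses $\lessdot$, hence sends the lr-pair $(\Lscr,\Rscr)$ to the lr-pair $(\{[n']\setminus R\}_{R\in\Rscr},\{[n']\setminus L\}_{L\in\Lscr})$ while interchanging its two components; under this correspondence the hypotheses and conclusion of~(i) turn precisely into those of~(ii). So assume the setup of~(i) and write $A:=X-i$, $B:=X-j$, $C:=X-k$, each of size $|X|-1$, with $B\in\Lscr\cap\Rscr$, $C\in\Lscr$, $A\in\Rscr$. I will use repeatedly that (1) $A\cup B=A\cup C=B\cup C=X$; (2) $C\lessdot B\lessdot A$ and $A,B,C\lessdot X$; (3) any two elements of $X$ lie in a common member of $\{A,B,C\}$, since each of $i,j,k$ is missing from exactly one of these three sets, so two elements exclude at most two of them; and (4) each of $A,B,C$ is already weakly separated from every $W\in\Lscr\cup\Rscr$, being a member of the ws-collection $\Lscr\cup\Rscr$. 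To show $(\Lscr\cup\{X\},\Rscr)$ is an lr-pair I must verify only the new instances of (LR), namely $X\lessdoteq R$ for all $R\in\Rscr$ with $|X|\le|R|$, and that $X\wsep W$ for all $W\in\Lscr\cup\Rscr$.

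The (LR) part is short. For $R\in\Rscr$ with $|R|\ge|X|$ we have $|B|,|C|<|X|\le|R|$, so (LR) for the original pair gives $B\lessdoteq R$ and $C\lessdoteq R$. Applying Lemma~\ref{lm:ABC}(i) to $X=B\cup C$ yields either $R\subseteq X$ or $X\lessdoteq R$; the first alternative forces $R=X$ by the size bound, so $X\lessdoteq R$ holds in all cases.

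For the weak-separation part I would first record the size-sensitive reformulation that follows directly from Definition~1: if $|S|<|W|$ then $S\wsep W$ holds exactly when $S\setminus W$ is a single block (no element of $W\setminus S$ lies strictly between two elements of $S\setminus W$), and symmetrically, if $|S|>|W|$ then $S\wsep W$ holds exactly when $W\setminus S$ is a single block. Now split on $m:=|W|$. If $m\ge|X|$, then $|A|,|B|,|C|<m$, and it suffices to show $X\setminus W$ is a single block: a straddling triple $p_1<o<p_2$ with $p_1,p_2\in X\setminus W$ and $o\in W\setminus X$ would, on choosing via fact~(3) a common $S\ni p_1,p_2$, exhibit a gap in $S\setminus W$ and contradict $S\wsep W$. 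If $m\le|X|-2$, then $|A|,|B|,|C|>m$, and it suffices to show $W\setminus X$ is a single block: a straddling triple $q_1<p<q_2$ with $q_1,q_2\in W\setminus X$ and $p\in X=A\cup B$ puts $p$ in some $S\in\{A,B\}$, and since $W\setminus X\subseteq W\setminus S$ this again contradicts $S\wsep W$.

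The main obstacle is the borderline regime $m=|X|-1$, where $A,B,C$ all share the size of $W$ and the single-block criterion is too weak to decide the matter; here I would instead use the order relations from (LR), which become available precisely because the sizes now match. If $W\in\Rscr$, then $|B|=|C|=|W|$ gives $B\lessdoteq W$ and $C\lessdoteq W$, and Lemma~\ref{lm:ABC}(i) on $X=B\cup C$ yields $W\subseteq X$ (whence $W\lessdot X$) or $X\lessdoteq W$, so $X\wsep W$. If $W\in\Lscr$, then $W\lessdoteq A$ and $W\lessdoteq B$, and Lemma~\ref{lm:ABC}(ii) gives $W\lessdoteq A\cup B=X$, again $X\wsep W$. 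This settles~(i), and~(ii) follows by the symmetry above. The steps needing the most care are the bookkeeping of fact~(3) (which of $A,B,C$ contains a given element) and the clean split into the three size-regimes, the equal-size regime being the only one where the lattice lemma, rather than the combinatorial block criterion, is required.
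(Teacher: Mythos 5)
Your reduction of (ii) to (i) by complementation, your verification of the new (LR) instances via Lemma~\ref{lm:ABC}(i), and your treatment of the borderline regime $|W|=|X|-1$ are all correct (and coincide with the paper's proof). However, the ``size-sensitive reformulation'' on which your two non-borderline regimes rest is stated backwards, and this is a genuine gap. For $|S|<|W|$, weak separation of $S$ from $W$ is \emph{not} equivalent to ``$S\setminus W$ is a single block''; it is equivalent to ``$W\setminus S$ is a single block'', i.e.\ no element of the \emph{smaller} difference $S\setminus W$ lies strictly between two elements of the \emph{larger} difference $W\setminus S$. Indeed, the relation $W\rhd S$ with $|W|\ge|S|$, which is permitted by Definition~1, is precisely the configuration in which $S\setminus W$ is split into two blocks around $W\setminus S$. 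A counterexample to your criterion: $S=\{3\}$, $W=\{2,4\}$. Here $S\setminus W$ is trivially a single block, yet $S$ and $W$ are not weakly separated: none of $S\lessdot W$, $W\lessdot S$, $W\rhd S$ holds, and $S\rhd W$ fails the size condition $|S|\ge|W|$.

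Consequently both non-borderline regimes collapse. In the regime $|W|\ge|X|$ you exclude the pattern $p_1<o<p_2$ with $p_1,p_2\in X\setminus W$, $o\in W\setminus X$; but for $|W|>|X|$ this is the wrong pattern (excluding it does not yield $X\wsep W$, as the example above shows), and moreover the pattern you transfer to $S$ --- an element of $W\setminus S$ strictly between two elements of $S\setminus W$, with $|S|<|W|$ --- does \emph{not} contradict $S\wsep W$: it is exactly the permitted configuration $W\rhd S$. The same inversion invalidates your argument in the regime $|W|\le|X|-2$, where the claimed contradiction is again just the allowed relation $S\rhd W$ with $|S|>|W|$. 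The correct pairing of patterns with membership tools is the opposite one (this is what the paper does in its Cases 1--3, citing Lemma~3.8 of \cite{LZ} for the existence of violating patterns): for $|W|>|X|$ a violation gives $b\in X\setminus W$ strictly between $a,c\in W\setminus X$, and \emph{single} membership $b\in S$ already contradicts $S\wsep W$; for $|W|\le|X|-2$ a violation gives $b\in W\setminus X$ between $a,c\in X\setminus W$, and one uses your fact~(3) to place \emph{both} $a,c$ in a common $S$; for $|W|=|X|$ one must work with the four-element alternating pattern $a<b<c<d$ (two elements from each difference), again placing the pair from $X\setminus W$ in a common $S$, since a three-element straddle alone proves nothing at equal sizes. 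With these replacements your overall structure goes through and becomes the paper's proof.
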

  \begin{proof}
~To see~(i), we first show that $X$ is weakly separated from any member $Y$ of
$\Lscr\cup\Rscr$.

Suppose that $|X|=|Y|+1$. If $Y\in \Lscr$, we argue as follows. Since
$X-i,X-j\in\Rscr$ and $|X-i|=|X-j|=|Y|$, we have $Y\lessdoteq X-i$ and
$Y\lessdoteq X-j$. By~(ii) in Lemma~\ref{lm:ABC}, we obtain that
$Y\lessdoteq (X-i)\cup (X-j)=X$. In case $Y\in \Rscr$, we have
$X-j,X-k\lessdoteq Y$, and now (i) in Lemma~\ref{lm:ABC} implies that
either $Y\lessdot X$ or $X\lessdoteq Y$.

Now suppose that $X$ is not weakly separated from some $Y\in\Lscr\cup\Rscr$
with $|X|\ne |Y|+1$. Three cases are possible.

1) Let $|X|<|Y|$. Then one easily shows that there are $a,c\in Y-X$ and
$b\in X-Y$ such that $a<b<c$; cf.~Lemma~3.8 in~\cite{LZ}. The element
$b$ belongs to some set $X'$ among $X-i,X-j,X-k$. Then $b\in X'-Y$ and
$a,c\in Y-X'$, implying $X'\rhd Y$ (since $X'\wsep Y$). But $|X'|<|Y|$;
a contradiction.

2) Let $|X|=|Y|$. Then there are $a,c\in Y-X$ and $b,d\in X-Y$ such that either
$a<b<c<d$ or $a>b>c>d$, by the same lemma in~\cite{LZ}. But both $b,d$ belong
to at least one set $X'$ among $X-i,X-j,X-k$. So $X',Y$ are not weakly
separated; a contradiction.

3) Let $|X|>|Y|+1$. Then $a<b<c$ for some $a,c\in X-Y$ and $b\in Y-X$. Both
$a,c$ belong to some set $X'$ among $X-i,X-j,X-k$. Then $Y\rhd X'$. But
$|X'|=|X|-1>|Y|$; a contradiction.

Thus, $\Lscr\cup\Rscr\cup\{X\}$ is a ws-collection. It remains to check that
$X\lessdoteq R$ for any $R\in\Rscr$ with $|R|\ge |X|$ (then
$(\Lscr\cup\{X\},\Rscr)$ is an lr-pair). Since $X-j\in\Lscr$ and $|X-j|<|R|$,
we have $X-j\lessdot R$. Similarly, $X-k\lessdot R$. So, by
Lemma~\ref{lm:ABC}(i), $X\lessdoteq R$, as required. (The case $R\subset X$ is
impossible since $|R|\ge |X|$.) This yields~(i).

Validity of~(ii) follows from~(i) applied to the complementary lr-pair
$(\{[n']-R\colon R\in\Rscr\},\{[n']-L\colon L\in\Lscr\})$.
  \end{proof}

Now we start proving Theorem~B. Let $\Cscr\subseteq 2^{[n]}$ be a
ws-collection. The goal is to show that $\Cscr$ is contained in a
largest ws-collection in $2^{[n]}$. We use induction on $n$.

Form the collections $\Lscr:=\{X\subseteq[n-1]\colon X\in \Cscr\}$ and $\Rscr:=
\{X\subseteq[n-1]\colon Xn\in\Cscr\}$. By easy observations
in~\cite[Section~3]{LZ}, ~$\Lscr\cup\Rscr$ is a ws-collection and, furthermore,
$(\Lscr,\Rscr)$ is an lr-pair. Let us extend $(\Lscr,\Rscr)$ to a maximal lr-pair
$(\bar\Lscr,\bar\Rscr)$ in $2^{[n-1]}$, i.e. $\Lscr\subseteq\bar\Lscr$,
$\Rscr\subseteq\bar\Rscr$, and neither $\bar\Lscr$ nor $\bar\Rscr$ can be further
extended. In particular, $\bar\Lscr$ contains the intervals $[i]$ and $\bar\Rscr$
does the intervals $[i..n-1]$ for each $i$ (including the empty interval).

By induction, there exists a largest ws-collection $\Cscr'\subseteq 2^{[n-1]}$
containing $\bar\Lscr\cup\bar\Rscr$. By Theorem~\ref{tm:til-ws}, $\Cscr'$ is
the spectrum $\mathfrak{S}_{T'}$ of some g-tiling $T'$ on the zonogon
$Z_{n-1}$. By Theorem~\ref{tm:prec-lat}, the poset $\Pscr$ determined by
$\Cscr'$ and $\prec^\ast$ is a lattice.

For $h=0,\ldots,n-1$, let $\Cscr'_h,\bar\Lscr_h,\bar\Rscr_h$ consist of the
sets $X$ with $|X|=h$ in $\Cscr',\bar\Lscr,\bar\Rscr$, respectively. Let
$C_h\in\Cscr'$ be a maximal element in the poset $\Pscr$ provided that
  \begin{numitem1}
~$C_h\preceq^\ast R$ ~for all $R\in\bar\Rscr_h\cup\ldots\cup\bar\Rscr_{n-1}$.
  \label{eq:ChR}
  \end{numitem1}
Since $\Pscr$ is a lattice, $C_h$ exists and is unique, and we have:
  \begin{numitem1}
~{\rm(i)} $L\preceq^\ast C_h$ ~for all $L\in\bar\Lscr_0\cup\ldots\cup
\bar\Lscr_h$;

{\rm(ii)} $C_0\preceq^\ast C_1\preceq^\ast \ldots\preceq^\ast C_n$,
  \label{eq:Cchain}
  \end{numitem1}
where (i) follows from condition~(LR) in the definition of lr-pairs. Note that
for each $h$, both sets $\bar\Lscr_h$ and $\bar\Rscr_h$ are nonempty, as the
former contains the interval $[h]$ (viz. the vertex $z^\ell_h$ of $\Gamma'$)
and the latter contains $[n-h..n-1]$ (viz. the vertex $z^r_{n-h}$). Also for
any $L\in\bar\Lscr_h$ and $R\in\bar\Rscr_h$, the facts that $L\prec^\ast
C_h\prec^\ast R$ (cf. \refeq{ChR},\refeq{Cchain}) and $|L|=|R|=h$ imply that
$|C_h|=h$, whence $C_h\in\Cscr'_h$.

If $X\in\Cscr'_h$ and $X\prec^\ast C_h$ (resp. $C_h\prec^\ast X$), then $X$
must belong to $\bar\Lscr$ (resp. $\bar\Rscr$); this follows from the
maximality of $(\bar\Lscr,\bar\Rscr)$ and relations~\refeq{ChR}
and~\refeq{Cchain}(i) (in view of the transitivity of $\prec^\ast$). Also the
maximality of $(\bar\Lscr,\bar\Rscr)$ implies that $C_h$ belongs to both
$\bar\Lscr_h$ and $\bar\Rscr_h$. We assert that $C_h$ is a critical vertex (in
level $h$) of the graph $G_{T'}$, for each $h$; see the definition in
Subsection~\ref{ssec:s-c-e}, part~D.

To see this, take a white edge $e$ leaving the vertex $C_h$ in $G_{T'}$ (unless
$h=n-1$); it exists by Corollary~\ref{cor:ord-mixed}. Suppose that the end
vertex $X$ of $e$ is terminal. Then $X$ is the top vertex $t(\tau)$ of some
black tile $\tau\in T'$; in particular, $X$ is not in
$\mathfrak{S}_{T'}=\Cscr'$. Since $e$ is white, there are white tiles
$\tau',\tau''\in F_{\tau}(X)$ with $r(\tau')=\ell(\tau'')=C_h$. Then: (a) the
vertices $X':=\ell(\tau')$, $C_h$ and $X'':=r(\tau'')$ are of the form
$X-k,X-j,X-i$, respectively, for some $i<j<k$; (b) $X'$ belongs to
$\bar\Lscr_h$ (since $X'$ is nonterminal and, obviously, $X'\prec^\ast C_h$);
and (c) $X''$ belongs to $\bar\Rscr_h$ (since $C_h\prec^\ast X''$). But then,
by Lemma~\ref{lm:ijkY}(i), $\bar\Lscr$ can be increased by adding the new
element $X$, contrary to the maximality of $(\bar\Lscr,\bar\Rscr)$. So the edge
$e$ is fully white. In a similar fashion (using (ii) in Lemma~\ref{lm:ijkY}),
one shows that $C_h$ has an entering fully white edge (unless $h=0$). Thus,
$C_h$ is critical, as required.

Finally, by Proposition~\ref{pr:Hh}, each pair of critical vertices $C_{h-1},C_h$
is connected by a path $P_h$ in the principal tree $K_h$. We assert that $P_h$
either has only one edge or is a zigzag path going from left to right. Indeed, if
this is not so, then $P_h$ is a zigzag path going from right to left, i.e. $P_h$
is of the form $(C_{h-1}=X_1,Y_1,\ldots,X_k,Y_k=C_h)$ with $k\ge 2$, the vertices
$X_p$ (resp. $Y_p$) are in level $h-1$ (resp. $h$), and for labels $i_p$ of
(forward) edges $(X_p,Y_p)$ and labels $j_p$ of (backward) edges $(Y_p,X_{p+1})$,
one holds $\ldots >i_p<j_p>i_{p+1}<\ldots$. Then $X_k\prec^\ast
X_{k-1}\prec^\ast\ldots\prec^\ast X_1$ (in view of $X_p-X_{p+1}=\{j_{p}\}$ and
$X_{p+1}-X_p=\{i_p\}$). This and $C_{h-1}\prec^\ast C_h$ imply $X_{k-1}\prec^\ast
Y_k$, which is impossible since $X_{k-1}-Y_k=\{j_{k-1}\}$,
~$Y_k-X_{k-1}=\{i_{k-1},i_k\}$ and $i_{k-1},i_k<j_{k-1}$.

It follows that the concatenation of the paths $P_1,\ldots,P_{n-1}$ gives an
$n$-legal path $P$ in $G_{T'}$. By Proposition~\ref{pr:legal}, the
$n$-expansion of $T'$ along $P$ is a feasible g-tiling $T$ on the zonogon
$Z_n$, and now it is straightforward to check that the initial collection
$\Cscr$ is contained in the spectrum $\mathfrak{S}_T$ of $T$ (which is a
largest ws-collection, by Theorem~\ref{tm:til-ws}).

This completes the proof of Theorem~B (provided validity of
Theorem~\ref{tm:prec-lat}).

\section{The auxiliary graph}  \label{sec:aux_graph}

The proof of Theorem~\ref{tm:prec-lat} is divided into two stages. In order to
prove the theorem, we are forced to include into consideration an additional
combinatorial object. This is a certain acyclic directed graph $\Gamma_T$
associated to a g-tiling $T$ on the zonogon $Z_n$; it is different from the
graph $G_T$ and its vertex set is the spectrum $\mathfrak{S}_T$ of $T$. In this
section we define $\Gamma_T$, called the \emph{auxiliary graph} for $T$, and
show that the natural partial order on $\mathfrak{S}_T$ determined by
$\Gamma_T$ is a lattice; this is the first, easier, stage of the proof of
Theorem~\ref{tm:prec-lat}. The second, crucial, stage will be given in the next
section; it consists in proving that the partial order of $\Gamma_T$ coincides
with $\prec^\ast$ (within $\mathfrak{S}_T$). Then Theorem~\ref{tm:prec-lat}
will follow from the fact that the largest ws-collections in $2^{[n]}$ are
exactly the spectra of g-tilings on $Z_n$, i.e. from Theorem~\ref{tm:til-ws}.
\medskip

\noindent \textbf{Construction of $\Gamma=\Gamma_T$:}~ Given a g-tiling $T$ on
$Z_n$, the vertex set of $\Gamma$ is $\mathfrak{S}_T$. The edge set of $\Gamma$
consists of two subsets: the set $E^{asc}$ of fully white edges of $G_T$ (the
edge set of $G^{fw}$), and the set $E^{hor}$ of edges corresponding to the
``horizontal diagonals'' of white tiles, namely, for each $\tau\in T^w$, we
form edge $e_\tau$ going from $\ell(\tau)$ to $r(\tau)$. An edge in $E^{asc}$
is called \emph{ascending} (it goes from some level $h$ to the next level
$h+1$, having the form $(X,Xi)$ with $|X|=h$). An edge in $E^{hor}$ is called
\emph{horizontal} (as it connects vertices of the same height).
\medskip

In particular, $\Gamma$ contains $bd(Z_n)$ (since all boundary edges are fully
white). Figure~\ref{fig:GT-Gamma} compares the graph $G_T$ drawn in
Fig.~\ref{fig:GT} and the graph $\Gamma_T$ for the same $T$; here the ascending
edges of $\Gamma$ are indicated by ordinary lines (which should be directed
up), and the horizontal edges by double lines or arcs (which should be directed
from left to right).
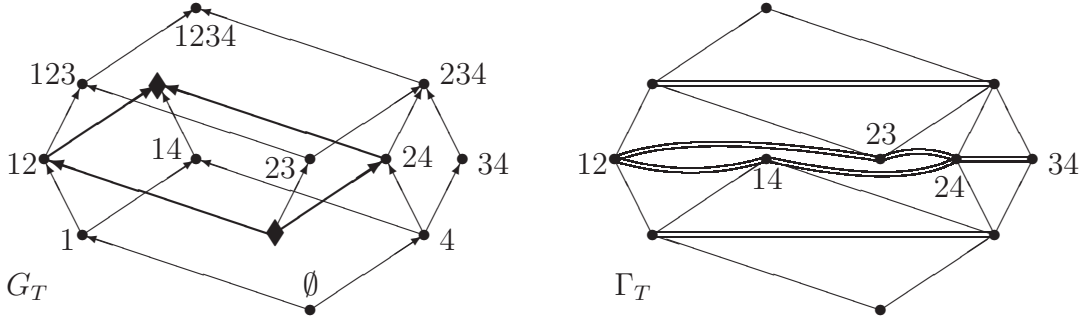
\begin{figure}[htb]
 \begin{center}
  \unitlength=1mm
  \begin{picture}(140,40)
   \put(0,0){\begin{picture}(60,40)
  \put(40,0){\vector(-3,1){29.5}}
  \put(10,10){\vector(-1,2){4.7}}
  \put(5,20){\vector(1,2){4.7}}
  \put(10,30){\vector(3,2){14.7}}
  \put(55,30){\vector(-3,1){29.5}}
  \put(60,20){\vector(-1,2){4.7}}
  \put(55,10){\vector(1,2){4.7}}
  \put(40,0){\vector(3,2){14.7}}
  \put(40,0){\circle*{1.5}}
  \put(10,10){\circle*{1.5}}
  \put(34,9){$\blacklozenge$}
  \put(55,10){\circle*{1.5}}
  \put(5,20){\circle*{1.5}}
  \put(25,20){\circle*{1.5}}
  \put(40,20){\circle*{1.5}}
  \put(50,20){\circle*{1.5}}
  \put(60,20){\circle*{1.5}}
  \put(10,30){\circle*{1.5}}
  \put(18.5,28.5){$\blacklozenge$}
  \put(55,30){\circle*{1.5}}
  \put(25,40){\circle*{1.5}}
  \put(40,20){\vector(-3,1){29.5}}
  \put(55,10){\vector(-3,1){29.5}}
  \put(25,20){\vector(-1,2){4.5}}
  \put(55,10){\vector(-1,2){4.7}}
  \put(35,10){\vector(1,2){4.7}}
  \put(50,20){\vector(1,2){4.7}}
  \put(10,10){\vector(3,2){14.7}}
  \put(40,20){\vector(3,2){14.7}}
\thicklines{
  \put(35,10){\vector(-3,1){29.5}}
  \put(50,20){\vector(-3,1){29.5}}
  \put(5,20){\vector(3,2){14.7}}
  \put(35,10){\vector(3,2){14.7}}
}
  \put(39,2){$\emptyset$}
  \put(7,8){1}
  \put(57,8){4}
  \put(0,18){12}
  \put(19,20){14}
  \put(34,17.5){23}
  \put(52,19){24}
  \put(62,18){34}
  \put(3,30){123}
  \put(57,30){234}
  \put(22,35){1234}
  \put(0,2){$G_T$}
    \end{picture}}
%
   \put(75,0){\begin{picture}(60,40)
  \put(40,0){\circle*{1.5}}
  \put(10,10){\circle*{1.5}}
  \put(55,10){\circle*{1.5}}
  \put(5,20){\circle*{1.5}}
  \put(25,20){\circle*{1.5}}
  \put(40,20){\circle*{1.5}}
  \put(50,20){\circle*{1.5}}
  \put(60,20){\circle*{1.5}}
  \put(10,30){\circle*{1.5}}
  \put(55,30){\circle*{1.5}}
  \put(25,40){\circle*{1.5}}
  \put(40,0){\line(-3,1){30}}
  \put(10,10){\line(-1,2){5}}
  \put(5,20){\line(1,2){5}}
  \put(10,30){\line(3,2){15}}
  \put(55,30){\line(-3,1){30}}
  \put(60,20){\line(-1,2){5}}
  \put(55,10){\line(1,2){5}}
  \put(40,0){\line(3,2){15}}
  \put(40,20){\line(-3,1){30}}
  \put(55,10){\line(-3,1){30}}
  \put(55,10){\line(-1,2){5}}
  \put(50,20){\line(1,2){5}}
  \put(10,10){\line(3,2){15}}
  \put(40,20){\line(3,2){15}}
  \put(10,10.3){\line(1,0){45}}
  \put(10,9.7){\line(1,0){45}}
  \put(10,30.3){\line(1,0){45}}
  \put(10,29.7){\line(1,0){45}}
  \put(50,20.3){\line(1,0){10}}
  \put(50,19.7){\line(1,0){10}}
  \qbezier(5,20.3)(15,17.3)(25,20.3)
  \qbezier(5,19.7)(15,16.7)(25,19.7)
  \qbezier(25,20.3)(43,16.3)(50,20.3)
  \qbezier(25,19.7)(43,15.7)(50,19.7)
  \qbezier(5,20.3)(15,24.3)(40,20.3)
  \qbezier(5,19.7)(15,23.7)(40,19.7)
  \qbezier(40,20.3)(45,22.3)(50,20.3)
  \qbezier(40,19.7)(45,21.7)(50,19.7)
  \put(5,2){$\Gamma_T$}
  \put(0,18){12}
  \put(23,16){14}
  \put(38,22){23}
  \put(47,15){24}
  \put(62,18){34}
    \end{picture}}
  \end{picture}
   \end{center}
\caption{Graphs $G_T$ and $\Gamma_T$} \label{fig:GT-Gamma}
  \end{figure}

In what follows we write $\prec_{G'}$ for the natural partial order on the
vertices of an acyclic directed graph $G'$, i.e. $x\prec_{G'} y$ if vertices
$x,y$ are connected in $G'$ by a directed path from $x$ to $y$.

Since each ascending edge of $\Gamma$ goes from one level to the next one and
each horizontal edge goes from left to right, the graph $\Gamma$ is acyclic; so
$\prec_\Gamma$ is a partial order on $\mathfrak{S}_T$. Moreover, $\Gamma$
possesses the following nice property.
   \begin{prop}  \label{pr:Gamma-lat}
The poset $(\mathfrak{S}_T,\prec_{\Gamma_T})$ is a lattice.
   \end{prop}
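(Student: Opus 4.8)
The plan is to show that the acyclic digraph $\Gamma=\Gamma_T$ defines a lattice by exhibiting, for any two vertices $A,B\in\mathfrak{S}_T$, a well-defined join $A\vee B$ and meet $A\wedge B$. Since $\Gamma$ is finite and acyclic, it suffices to establish that the set of common upper bounds of $\{A,B\}$ (i.e. vertices reachable from both $A$ and $B$ by directed paths) has a unique minimal element, and dually for lower bounds. First I would record the structural features that make this tractable: every ascending edge raises the level (height) by exactly one, and every horizontal edge preserves level while moving strictly to the right. Consequently a directed path from $A$ to $C$ exists only if $|A|\le|C|$, and the level function is monotone along $\prec_\Gamma$. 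This already organizes the reachability relation level by level.

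The core step is to understand the ``horizontal slice'' of $\Gamma$ at each level $h$, namely the subgraph on $\mathfrak{S}_{T,h}=\{X\in\mathfrak{S}_T\colon|X|=h\}$ induced by the horizontal edges $e_\tau$. I expect this slice to be a linear (total) order: the horizontal edges come from the diagonals $\ell(\tau)\to r(\tau)$ of white tiles, and by Corollary~3.5 the left/right vertices of white tiles at a given level fit together consistently, so that moving rightward along horizontal edges totally orders the level-$h$ vertices by the ``lies on the right'' relation introduced in Section~3. I would verify that the horizontal edges connect each level-$h$ vertex to its immediate right-neighbor in this order, making the horizontal reachability at level $h$ coincide with the left-to-right total order. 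Granting this, ``$C$ is reachable from $A$'' decomposes into: climb some ascending edges and slide rightward, and the monotonicity of the rightward order across levels (which I would check is respected by the geometry of the principal trees $K_h$ from Proposition~3.10) lets one propagate a rightmost/leftmost choice coherently between consecutive levels.

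Given the per-level total order, I would construct the join concretely. For the common upper bounds of $A$ and $B$, consider the smallest level $h^\ast\ge\max\{|A|,|B|\}$ at which some vertex is reachable from both; within that level the reachable-from-$A$ set and the reachable-from-$B$ set are each up-sets (right-rays) of the total order, so their intersection is again a right-ray and hence has a unique leftmost element, which I claim is $A\vee B$. The verification that this leftmost common upper bound is actually below every common upper bound uses again that reachability, once one is at the correct level, is governed by the single left-to-right order and that ascending to higher levels only enlarges the reachable set monotonically. The meet $A\wedge B$ is obtained dually, either by the same argument on down-sets or, more cleanly, by applying the join construction to the reversed g-tiling $T^{rev}$, under which $\Gamma_{T^{rev}}$ is the order-reversal of $\Gamma_T$; the reversal symmetry (noted for g-tilings in Section~3) then yields the meet for free.

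The main obstacle I anticipate is precisely the claim that the horizontal edges totally order each level and, more delicately, that this order is \emph{compatible across levels} in the sense needed to propagate the rightmost common bound downward to the join level. Establishing this compatibility is where the planarity of $\sigma(G_T)$ on the disc $D_T$ and the forest/principal-tree structure of Proposition~3.10 must be used carefully: I would argue that if $C$ lies weakly to the right of $C'$ at level $h$ and both have ascending edges into level $h+1$, then their targets preserve the rightward order, so that right-rays map to right-rays under ascending. Making this crossing-free ``no-overtaking'' statement precise, rather than merely plausible, is the technical heart of the argument; everything else is bookkeeping on a finite poset built from two monotone one-dimensional orders.
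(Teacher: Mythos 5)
Your construction rests on the claim that the horizontal edges totally order each level, i.e.\ that reachability inside the level-$h$ subgraph $\Gamma^h$ is a linear order on the height-$h$ vertices. This is false for \emph{generalized} tilings, and its failure is exactly what distinguishes them from pure tilings. The paper's own example (Figure~\ref{fig:GT-Gamma}) is a counterexample: the level-$2$ vertices are $12,14,23,24,34$, and the horizontal edges are $12\to 14$, $12\to 23$, $14\to 24$, $23\to 24$, $24\to 34$, so the vertices $14$ and $23$ are incomparable. Moreover, since every ascending edge strictly increases height and every horizontal edge preserves it, any directed path in $\Gamma$ between two vertices of equal height consists of horizontal edges only; hence $14$ and $23$ are incomparable in the full order $\prec_\Gamma$, not merely in the level subgraph. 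Your covering argument (that the tile diagonals at middle height $h$ consecutively cover the horizontal line, chaining the level-$h$ vertices) is valid only when tiles do not overlap, i.e.\ for pure tilings. As soon as $T$ contains a black tile, the corresponding level subgraph contains a nontrivial hammock: a union of several internally disjoint directed paths between consecutive critical vertices, whose faces are in bijection with the black tiles (this is the content of Lemma~\ref{lm:rho-face} and property \refeq{cell} in Section~\ref{ssec:hamm}), and each such face produces incomparable same-level vertices.

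This gap cannot be patched cheaply, because once the levels are general posets rather than chains, the ``right-ray'' argument loses its footing: the common upper bounds of $A$ and $B$ inside the first common level form the intersection of two up-sets of a poset, and the existence of a \emph{unique} minimal element there is precisely the lattice property of the level poset. That property is not free: the paper itself derives it (statement \refeq{p-o-h}) from the same planarity lemma it uses for the whole graph, so a repaired version of your argument would in any case circle back to that lemma. What the paper actually does is shorter and avoids any level-by-level analysis: it proves Lemma~\ref{lm:lattice} --- a planar acyclic digraph with a unique source and a unique sink, both lying on a common face, is a lattice --- and applies it to the planar embedding $\sigma(\Gamma)$ on the disc $D_T$, using \refeq{horedge} to see that $z_0$ and $z_n$ are the unique source and sink. (Your reduction of meets to joins via the reversed tiling $T^{rev}$ is correct, but it addresses the easy half of the problem and is moot given the gap above.)
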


This is provided by the following lattice property of acyclic planar graphs; it
is pointed out in~\cite{Bir} (p.~32, Exer.~7(a)), but for the completeness of our
description, we give a proof.

  \begin{lemma} \label{lm:lattice}
Let $G'=(V',E')$ be a planar acyclic directed graph embedded in the plane.
Suppose that the partial order $\Pscr=(V',\prec_{G'})$ has a unique minimal
element $s$ and a unique maximal element $t$ and that both $s,t$ are contained
in (the boundary of) the same face of $G'$. Then $\Pscr$ is a lattice.
  \end{lemma}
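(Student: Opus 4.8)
The plan is to prove Lemma~\ref{lm:lattice} by showing that every pair of vertices $x,y\in V'$ has a greatest lower bound (meet) in the poset $\Pscr$; the existence of a least upper bound (join) will then follow by a symmetric argument (reversing all edges of $G'$, which swaps the roles of $s$ and $t$ but preserves planarity and the hypothesis that the unique source and sink lie on a common face). So it suffices to construct the meet $x\wedge y$.

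First I would exploit the planarity together with the fact that $s,t$ lie on a common face. Embed $G'$ so that this face is the outer (unbounded) face; then the boundary of the outer face is divided by $s$ and $t$ into two directed paths from $s$ to $t$, a ``left'' boundary and a ``right'' boundary, and every vertex of $V'$ lies weakly between them. The key structural idea is that, because $G'$ is acyclic with a single source $s$, every vertex $v$ is reachable from $s$, so the set $D(v):=\{u: u\preceq_{G'} v\}$ of predecessors of $v$ is nonempty and forms an ``order ideal'' that is topologically a connected region reaching down to $s$. The plan is to show that for any two vertices $x,y$, the intersection $D(x)\cap D(y)$ of their down-sets, though possibly disconnected as a vertex set, has a unique $\prec_{G'}$-maximal element, which is then the meet.

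The heart of the argument will be the following planarity claim: if $u,u'$ are two distinct $\prec_{G'}$-maximal elements of $D(x)\cap D(y)$, then one can route a directed path from $u$ up to $x$ and another from $u'$ up to $x$, and likewise up to $y$; because all four targets are just the two vertices $x,y$, these four directed paths together with suitable boundary arcs enclose a region, and a Jordan-curve / planarity argument forces one of $u,u'$ to lie inside a directed cycle or inside the down-set of the other, contradicting either acyclicity or the maximality of both. Concretely, I would consider the two directed paths $u\rightsquigarrow x$ and $u'\rightsquigarrow x$: by planarity they cross or touch, and at a first common vertex $w$ one gets $u\preceq w$ and $u'\preceq w$ with $w\in D(x)$; running the same comparison against $y$ and combining yields a common vertex lying in $D(x)\cap D(y)$ and above both $u,u'$, contradicting their maximality unless $u=u'$. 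That forces a unique maximal element, which is the desired meet.

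\textbf{The main obstacle} I expect is making the ``planar paths must merge'' step fully rigorous: one must argue that two directed paths into the same vertex $x$, when drawn in a planar embedding, produce a common vertex that still lies below \emph{both} $x$ and $y$ simultaneously, rather than merely below one of them. The clean way to handle this is to use the outer-face hypothesis to linearly order the vertices of $D(x)\cap D(y)$ by their ``horizontal'' position between the left and right outer boundaries, and to show that the rightmost (or topmost in the partial order) element dominates all others via a path confined to the planar region cut out by the up-paths to $x$ and to $y$; the Jordan curve theorem guarantees such a path stays inside and hence reaches a genuine common lower bound. Once the meet is shown unique, I would invoke the standard fact that a finite poset with a top element (here $t$) in which every pair has a meet is automatically a lattice, giving the join for free and completing the proof.
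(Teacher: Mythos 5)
Your overall frame (reduce to existence of meets, then get joins by edge reversal or the standard finite-poset fact) matches the paper's, but the core step of your argument has a genuine gap, and it is exactly the step you yourself flag as the ``main obstacle.'' You claim that from two distinct maximal lower bounds $u,u'$ of $\{x,y\}$ one can take the directed paths $u\rightsquigarrow x$ and $u'\rightsquigarrow x$, find a common vertex $w$ (so $u,u'\preceq w\preceq x$), do the same for $y$, and then ``combine'' to get a single vertex of $D(x)\cap D(y)$ above both $u$ and $u'$. The two paths into $x$ do always share a vertex --- namely $x$ itself --- but nothing forces the common vertex produced for $x$ to coincide with, or even be comparable to, the one produced for $y$; the ``combining'' is precisely the assertion to be proved, and it is false in general. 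Concretely, take $V'=\{s,u,u',x,y,t\}$ with edges $s\to u$, $s\to u'$, $u\to x$, $u\to y$, $u'\to x$, $u'\to y$, $x\to t$, $y\to t$. This digraph is planar and acyclic, with unique source $s$ and unique sink $t$; the paths $u\to x$ and $u'\to x$ meet only at $x$, the paths into $y$ meet only at $y$, and $\{x,y\}$ has two maximal lower bounds, so the poset is not a lattice. Of course in any planar embedding of this graph $s$ and $t$ do not lie on a common face --- which shows that a correct proof must use that hypothesis in the merging step, and your sketch never does: the paths emanating from $s$ play no role in your argument, and your closing suggestion (order $D(x)\cap D(y)$ by ``horizontal position'' and confine a path by the Jordan curve theorem) restates the difficulty rather than resolving it.

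For comparison, the paper's proof makes the needed use of $s$ and $t$ explicit. Assuming two distinct maximal lower bounds $a,b$, it takes the four up-paths $P_x,P_y,P'_x,P'_y$ (arranged, after possibly replacing $x$ and $y$ by earlier common points, to be internally disjoint, so that they bound a closed region $R$ with corners $a,b,x,y$), then takes directed paths $Q_a,Q_b$ from $s$ to $a$ and to $b$ and shows, using the maximality of $a$ and $b$, that these paths meet $R$ only at $a$ and $b$. The region bounded by the divergent parts of $Q_a,Q_b$ together with $P_x,P'_x$ (or with $P_y,P'_y$) then traps one of $x,y$, say $y$, in its interior, while $t$ lies outside it because $s$ and $t$ are on the outer face. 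A directed path from $y$ to $t$ must therefore leave through a boundary vertex $z$, and each location of $z$ gives $y\prec z\preceq x$, or $y\prec z\preceq a$, or $y\prec z\preceq b$, contradicting either the maximality of the two distinct elements $a,b$ or acyclicity. If you want to repair your proposal, this is the missing mechanism: the enclosing region must be built from the paths out of $s$, and the contradiction must come from $t$ being unreachable from inside --- the two hypotheses your argument never invokes.
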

  \begin{proof}
Let $\prec$ stand for $\prec_{G'}$. One may assume that both $s,t$ belong to
the outer (unbounded) face of $G'$.

Consider two vertices $x,y\in V'$. Let $L$ be the set of maximal elements in
$\{v\in V'\colon v\preceq x,y\}$, and $U$ the set of minimal elements in
$\{v\in V'\colon v\succeq x,y\}$. Since $\Pscr$ has unique minimal and maximal
elements, both $L,U$ are nonempty. We have to show that $|L|=|U|=1$. Below by a
path we mean a directed path.

Suppose, for a contradiction, that $L$ contains two distinct elements $a,b$.
Take four paths in $G'$ connecting $\{a,b\}$ to $\{x,y\}$: a path $P_x$ from
$a$ to $x$, a path $P_y$ from $a$ to $y$, a path $P'_x$ from $b$ to $x$, and a
path $P'_y$ from $b$ to $y$. Then $P_x$ meets $P_y$ only at the vertex $a$ and
is disjoint from $P'_y$ (otherwise at least one of the lower bounds $a,b$ for
$x,y$ is not maximal). Similarly, $P'_x$ meets $P'_y$ only at $b$ and is
disjoint from $P_y$. Also we may assume that $P_x\cap P'_x=\{x\}$ (otherwise we
could replace $x$ by another common point $x'$ of $P_x,P'_x$; then $a,b\in
L(x',y)$ and $x'$ is ``closer'' to $a,b$ than $x$). Similarly, we may assume
that $P_y\cap P'_y=\{y\}$. Let $R$ be the closed region (homeomorphic to a
disc) surrounded by $P_x,P_y,P'_x,P'_y$. The fact that $s,t$ belong to the
outer face easily implies that they lie outside $R$.

Take a path $Q_a$ from $s$ to $a$ and a path $Q_b$ from $s$ to $b$. For any
vertex $v\ne a$ of $Q_a$, relation $v\prec a$ implies that $v$ belongs to
neither $P_x\cup P_y$ (otherwise $a\prec v$ would take place) nor $P'_x\cup
P'_y$ (otherwise $b\preceq v\prec a$ would take place). Therefore, $Q_a$ meets
$R$ only at $a$. Similarly, $Q_b\cap R=\{b\}$.

Let $v$ be the last common vertex of $Q_a$ and $Q_b$. Take the part $Q'$ of
$Q_a$ from $v$ to $a$, and the part $Q''$ of $Q_b$ from $v$ to $b$. Then
$Q'\cap Q''=\{v\}$. Observe that $R$ is contained either in the closed region
$R_1$ surrounded by $Q',Q'',P_x,P'_x$, or in the closed region $R_2$ surrounded
by $Q',Q'',P_y,P'_y$. One may assume that $R\subset R_1$ (this case is
illustrated in the picture below). Then $y$ is an interior point in $R_1$.
Obviously, $t\not\in R_1$. Now since $y\prec t$, there exists a path from $y$
to $t$ in $G'$. This path meets the boundary of $R_1$ at some vertex $z$. But
if $z$ occurs in $P_x\cup P'_x$, then $y\prec z\preceq x$, and if $z$ occurs in
$Q'$ (in $Q''$), then $y\prec z\preceq a$ (resp. $y\prec z\preceq b$); a
contradiction.

 \begin{center}
  \unitlength=1mm
  \begin{picture}(40,29)
  \put(20,0){\circle*{1.5}}
  \put(20,12.5){\circle*{1.5}}
  \put(5,20){\circle*{1.5}}
  \put(35,20){\circle*{1.5}}
  \put(20,27.5){\circle*{1.5}}
  \put(8,29){\circle*{1.5}}
  \put(20,0){\vector(-3,4){14.5}}
  \put(20,0){\vector(3,4){14.5}}
  \put(5,20){\vector(2,-1){14.5}}
  \put(5,20){\vector(2,1){14.5}}
  \put(35,20){\vector(-2,-1){14.5}}
  \put(35,20){\vector(-2,1){14.5}}
  \put(16.5,-1){$s$}
  \put(2,18){$a$}
  \put(36,18){$b$}
  \put(21,10){$y$}
  \put(21.5,28){$x$}
  \put(19,19){$R$}
  \put(5.5,27){$t$}
  \end{picture}
   \end{center}

Thus, $|L|=1$. The equality $|U|=1$ is obtained by reversing the edges of $G'$.
  \end{proof}

Now we prove Proposition~\ref{pr:Gamma-lat} as follows. Consider the image
$\sigma(\Gamma)$ of $\Gamma$ on the disc $D_T$, where the image $\sigma(e)$ of
the horizontal edge $e$ drawn in a white tile $\tau$ is naturally defined to be
the corresponding directed diagonal of the square $\sigma(\tau)$. Since the
embedding of $\sigma(G_T)$ in $D_T$ is planar, so is the embedding of
$\sigma(\Gamma)$. Also: (i) ~$\sigma(bd(Z))$ is the boundary of $D_T$; (ii)
each boundary vertex of $Z$ lies on a directed path from $z_0$ to $z_n$ in
$G_T$, which belongs to $\Gamma$ as well; and (iii) $\Gamma$ is acyclic. Next,
if a nonterminal vertex of $T$ does not belong to the left (right) boundary of
$Z$, then $v$ is the right (resp. left) vertex of some white tile, as is seen
from (iv) (resp.~(v)) in Corollary~\ref{cor:ord-mixed}. This implies that
   \begin{numitem1}
for $v\in\mathfrak{S}_T$, if $v$ is not in $\ell bd(Z)$ (not in $rbd(Z)$), then
there exists a horizontal edge in $\Gamma$ entering (resp. leaving) $v$.
   \label{eq:horedge}
   \end{numitem1}
Thus, $z_0$ and $z_n$ are the unique minimal and maximal vertices in $\Gamma$.
Applying Lemma~\ref{lm:lattice} to $\sigma(\Gamma)$, we conclude that $\Gamma$
determines a lattice, as required. \hfill\qed

\section{Equality of two posets}  \label{sec:2posets}

The goal of this section is to show the following crucial property of the
auxiliary graph $\Gamma=\Gamma_T$ introduced in the previous section.
 \begin{theorem} \label{tm:2posets}
{\rm (Auxiliary Theorem)} ~For a g-tiling $T$ on $Z_n$, the partial orders on
$\mathfrak{S}_T$ given by $\prec^\ast$ and by $\prec_\Gamma$ are equal.
  \end{theorem}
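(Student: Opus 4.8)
I would prove the theorem by establishing the two inclusions $\prec_\Gamma\ \subseteq\ \prec^\ast$ and $\prec^\ast\ \subseteq\ \prec_\Gamma$ on $\mathfrak{S}_T$. The first is the easy half. Since $\prec^\ast$ is transitive by \refeq{AAA}, it suffices to check that each edge of $\Gamma$ is already a $\prec^\ast$-relation. An ascending edge has the form $(X,Xi)$, and here $X\lessdot Xi$ with $|X|<|Xi|$, so $X\prec^\ast Xi$. A horizontal edge $e_\tau$ attached to a white tile $\tau=\tau(Y;i,j)$ runs from $\ell(\tau)=Yi$ to $r(\tau)=Yj$ with $i<j$, so $Yi\lessdot Yj$ and $|Yi|=|Yj|$, i.e. $\ell(\tau)\prec^\ast r(\tau)$. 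Chaining along a directed $\Gamma$-path and using transitivity gives $\prec_\Gamma\subseteq\prec^\ast$. As a by-product I would record that along any directed $\Gamma$-path the height $|\cdot|$ is nondecreasing and every intermediate vertex lies $\prec^\ast$-between the endpoints; in particular a $\Gamma$-path between two vertices of equal height consists of horizontal edges only.

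The substance is the reverse inclusion: given $A,B\in\mathfrak{S}_T$ with $A\prec^\ast B$, I must produce a directed $\Gamma$-path from $A$ to $B$. The plan is to find a single good first step $A\to C$ with $C\in\mathfrak{S}_T$ and $A\prec^\ast C\preceq^\ast B$, and then recurse on $(C,B)$, the induction running on the well-founded measure $\bigl(|B|-|A|,\ \#\{D\in\mathfrak{S}_T\colon A\prec^\ast D\preceq^\ast B\}\bigr)$ ordered lexicographically (an ascending step drops the first coordinate, a strictly-rightward horizontal step fixes it and drops the second). Two kinds of step arise. If $|A|<|B|$, I want to ascend along a fully white edge $(A,Ai)$: combinatorially any $i\in B-A$ gives $Ai\preceq^\ast B$ with $|Ai|\le|B|$, so the only issue is to choose $i$ for which $Ai$ is again a nonterminal vertex of $G_T$; once it is, Proposition~\ref{pr:4edges}(i) guarantees that $(A,Ai)$ is an actual fully white, hence ascending, edge. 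If $|A|=|B|$, then $A\neq B$ makes $A$ a non-maximal set of its level, so $A\notin rbd(Z)$, and by \refeq{horedge} (coming from Corollary~\ref{cor:ord-mixed}) $A$ has a leaving horizontal edge to some $A'=r(\tau)$; here the task is to select, among the white tiles with left vertex $A$, one whose right vertex $A'$ still satisfies $A'\preceq^\ast B$.

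The main obstacle is exactly the existence and correct choice of this next spectrum vertex. The naive neighbors of $A$ may be \emph{terminal} (tops or bottoms of black tiles, hence outside $\mathfrak{S}_T$), and when several admissible edges are available one must pick the one that advances toward $B$ without overshooting. Controlling this forces the fine combinatorics of g-tilings into play: the complete local description of edges and tiles around a nonterminal vertex (Corollary~\ref{cor:ord-mixed}), the forest/principal-tree structure of the white edges in each level (Proposition~\ref{pr:Hh}), the $i$-strip structure (Proposition~\ref{pr:strips}), and the ``hidden tile'' completion of Proposition~\ref{pr:4edges}(ii), which turns a $4$-cycle of nonterminal vertices into a genuine tile and thus into a horizontal $\Gamma$-edge. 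I expect the heart of the proof to be a case analysis showing that when the obvious step is blocked by a black tile, the path can still be routed through the white tiles flanking that black tile, using \refeq{nontermv} and Corollary~\ref{cor:termv} to control how white and black tiles meet at mixed vertices.

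To halve the casework I would exploit the reversal/complementation symmetry: passing to $T^{rev}$ sends $X\mapsto[n]-X$, reverses every edge of $\Gamma$, and satisfies $A\prec^\ast B\Leftrightarrow [n]-B\prec^\ast[n]-A$, so the analysis of ascending steps yields the descending one for free and interchanges the two horizontal directions. Where a purely local argument in a single tiling becomes unwieldy, an alternative to keep in reserve is induction on $n$ via the $1$- and $n$-contractions of Proposition~\ref{pr:contract}: contracting a boundary strip deletes the extremal label, and since $1$ and $n$ are the extreme elements for $\lessdot$, both $\prec^\ast$ and the edge set of $\Gamma$ restrict controllably, allowing $\Gamma$-paths to be lifted from the smaller tiling. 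Once $\prec^\ast\subseteq\prec_\Gamma$ is in hand, the two inclusions give $\prec^\ast=\prec_\Gamma$ on $\mathfrak{S}_T$; combined with Proposition~\ref{pr:Gamma-lat} and Theorem~\ref{tm:til-ws} this also completes Theorem~\ref{tm:prec-lat}.
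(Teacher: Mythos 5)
Your easy direction ($\prec_\Gamma$ implies $\prec^\ast$) is exactly the paper's argument and is fine. The gap is in the converse, and it is not a detail: the ``single good first step'' on which your recursion rests is never established, and your specific dichotomy --- ascend whenever $|A|<|B|$, move horizontally only when $|A|=|B|$ --- is false. Take the g-tiling of Figure~\ref{fig:GT}, $A=\{1,4\}$ and $B=\{1,2,3,4\}$. Then $A\prec^\ast B$ and $|A|<|B|$, but $B-A=\{2,3\}$ and neither $\{1,2,4\}$ (the top of the black tile, hence a terminal vertex) nor $\{1,3,4\}$ (not a vertex of $G_T$ at all) lies in $\mathfrak{S}_T$; in fact the only edge of $\Gamma_T$ leaving $A$ is the horizontal edge $14\to 24$ (Figure~\ref{fig:GT-Gamma}), so no ascending first step toward $B$ exists. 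Hence the statement you would actually have to prove is: whenever $A\prec^\ast B$, \emph{some} $\Gamma$-edge $A\to C$ (ascending or horizontal) satisfies $C\preceq^\ast B$. By your own easy direction this single-step claim is essentially equivalent to the theorem, and in its place your proposal offers only the expectation of ``a case analysis \ldots\ routed through the white tiles flanking that black tile.'' That expectation is precisely the hard content of Proposition~\ref{pr:LZ-Gamma}, and none of the local facts you cite (Corollary~\ref{cor:ord-mixed}, Propositions~\ref{pr:Hh}, \ref{pr:strips}, \ref{pr:4edges}) delivers it directly.

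The paper does not argue locally at all; what you relegate to a one-sentence ``reserve'' is its actual proof, and it is far from automatic. Proposition~\ref{pr:LZ-Gamma} is proved by induction on the number of labels using \emph{both} contractions, and the split is forced: the $n$-contraction handles $|A|<|B|$ (it breaks down when $|A|=|B|$ and $n\in B-A$, since then $|B-\{n\}|<|A-\{n\}|$ and induction cannot be invoked), while the $1$-contraction handles $|A|=|B|$ (it breaks down when $|A|<|B|$, because the concatenation of the paths $\tilde\Pscr^h_2$ with their bridges is not a directed path). Moreover, lifting a directed path from $\Gamma_{T/n}$ or $\Gamma_{T/1}$ back to $\Gamma_T$ is where the real work lies: it needs the apparatus of Subsections~\ref{ssec:ref_prop}--\ref{ssec:contr_Gamma} --- hammocks and their faces, critical vertices, the paths $\Pscr_1,\Pscr'_2$, and properties~\refeq{separ_paths} and~\refeq{H1H2} --- none of which your proposal engages. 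So while your fallback points in the right direction, the proposal as written does not contain a proof of the hard inclusion.
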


This together with Proposition~\ref{pr:Gamma-lat} will imply
Theorem~\ref{tm:prec-lat}, thus completing the whole proof of the main results
(Theorem~B and, further, Theorem~A) of this paper, as is explained in
Section~\ref{sec:proof}. The proof of Theorem~\ref{tm:2posets} is given
throughout this section. We keep notation from Section~\ref{sec:aux_graph}.

The fact that $\prec_\Gamma$ implies $\prec^\ast$ is easy. Indeed, since each
edge $e=(A,B)$ of $\Gamma$ is of the form either $(X,Xi)$ (when $e$ is
ascending) or $(Xi,Xj)$ with $i<j$ (when $e$ is horizontal), we have $A\lessdot
B$ and $|A|\le|B|$, whence $A\prec^\ast B$. Then for any $C,D\in
\mathfrak{S}_T$ satisfying $C\prec_\Gamma D$, the relation $C\prec^\ast D$ is
obtained by considering a directed path from $C$ to $D$ in $\Gamma$ and using
the transitivity property~\refeq{AAA}.

It remains to show the converse implication, i.e. the following
  \begin{prop}  \label{pr:LZ-Gamma}
~For a g-tiling $T$ on $Z=Z_n$, let two sets (nonterminal vertices)
$A,B\in\mathfrak{S}_T$ satisfy $A\prec^\ast B$. Then $A\prec_\Gamma B$, i.e.
the graph $\Gamma=\Gamma_T$ contains a directed path from $A$ to $B$.
  \end{prop}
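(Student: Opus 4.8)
The plan is to retain the easy implication recorded just above (every edge of $\Gamma$ is of the form $(X,Xi)$ or $(Xi,Xj)$ with $i<j$, so $\prec_\Gamma$ refines $\prec^\ast$ via the transitivity \refeq{AAA}) and to reduce the converse, Proposition~\ref{pr:LZ-Gamma}, to a single local statement, which I will call the \emph{first-step lemma}: if $C,B\in\mathfrak{S}_T$ with $C\prec^\ast B$ and $C\neq B$, then $\Gamma$ has an edge leaving $C$ whose head $C'$ still satisfies $C'\preceq^\ast B$ (writing $\preceq^\ast$ for ``$\prec^\ast$ or equal''). Granting this, the Proposition follows by a maximal-element argument. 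Given $A\prec^\ast B$, put $S:=\{C\in\mathfrak{S}_T\colon (A=C\text{ or }A\prec_\Gamma C)\text{ and }C\preceq^\ast B\}$. Then $A\in S$, and since $\Gamma$ is finite and acyclic, $S$ has a $\prec_\Gamma$-maximal element $C^\ast$. If $C^\ast\neq B$, the first-step lemma gives an edge $(C^\ast,C')$ of $\Gamma$ with $C'\preceq^\ast B$; as every $\Gamma$-edge strictly increases $\prec_\Gamma$, we get $A\prec_\Gamma C'$ and hence $C'\in S$, contradicting maximality. Therefore $C^\ast=B$ and $A\prec_\Gamma B$, as required.

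To prove the first-step lemma I would separate according to levels. Write $P:=C-B$ and $F:=B-C$; since $C\lessdot B$ we have $F\neq\emptyset$, $|P|\le|F|$, and every element of $P$ lies below every element of $F$. If $|C|<|B|$ (equivalently $|F|>|P|$), the natural move is \emph{ascending}: adding the smallest $f\in F$ gives $Cf$ with $Cf-B=P$ and $B-Cf=F-f$, so $Cf\lessdot B$ when $|F|\ge2$ and $Cf=B$ when $C\subset B$; in both cases $|Cf|\le|B|$, so $Cf\preceq^\ast B$. If instead $|C|=|B|$ (so $|F|=|P|\ge1$), ascending would overshoot the level, and the desired move is \emph{horizontal}: setting $k:=\max P$ and $l:=\min F$ (note $k<l$ since $P$ lies below $F$), the swap $C':=C-k+l$ has $C'-B=P-k$ and $B-C'=F-l$, whence $C'\preceq^\ast B$ with the same cardinality and with $|P|,|F|$ each reduced by one. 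In each case the combinatorially desired target is explicit; the problem is to realize it as an actual edge of $\Gamma$, i.e. to exhibit either a fully white edge $(C,Cf)$ of $G_T$ (an $E^{asc}$-edge) or a white tile with left vertex $C$ and right vertex $\preceq^\ast B$ (an $E^{hor}$-edge).

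This realization is where the real work lies, and I expect it to be the main obstacle. The naive edge toward $B$ may fail to be present: its head could be a terminal vertex (hence not in $\mathfrak{S}_T$), or the relevant tile could be black rather than white, so that (T2) and Corollary~\ref{cor:termv} forbid the straightforward step. My plan is to analyze the local picture at $C$ via Corollary~\ref{cor:ord-mixed}: the white edges leaving $C$ and the white tiles having left vertex $C$ occur in a prescribed clockwise order, and one should select, among the genuinely \emph{available} white out-edges and white left-tiles, the one pointing most toward $B$; Proposition~\ref{pr:4edges} and the principal-tree structure of Proposition~\ref{pr:Hh} are then used to certify that the chosen tile is white and its head nonterminal. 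To verify that the selected head $C'$ is still $\preceq^\ast B$ I would use that any two members of the spectrum $\mathfrak{S}_T$ are automatically weakly separated, so only the $\lessdot$-part and the cardinality need checking. When the desired white neighbor is blocked by a black tile, the intended remedy is to route around it by a short detour of one extra step, again read off from the cyclic structure at the mixed vertex $C$ and at the left/right vertices of the offending black tile.

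Finally, I would cut the casework roughly in half by symmetry and keep a fallback induction in reserve. Passing to the reversed g-tiling $T^{rev}$ replaces each $\tau(X;i,j)$ by $\tau([n]-Xij;i,j)$, which on spectra is the complementation $X\mapsto[n]-X$; this reverses $\lessdot$ and reverses all orientations, so it interchanges ``ascending out of $C$'' with ``ascending into $B$'' and lets me treat the two symmetric situations at once. If the purely local selection argument stalls in the same-level (horizontal) case, the fallback is to contract the strip of the label $l=\min F$ and apply Propositions~\ref{pr:contract} and~\ref{pr:TP-T} to run an induction on $n$, transporting a $\Gamma_{T/n}$- or $\Gamma_{T/1}$-path back through the expansion; the delicate point there would be to track how $\prec^\ast$ and the edge set of $\Gamma$ behave under the identification of the strip boundaries. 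I expect the direct local argument to suffice, with the black-tile detours being the technically heaviest part.
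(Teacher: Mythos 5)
Your reduction of Proposition~\ref{pr:LZ-Gamma} to the ``first-step lemma'' is logically sound, but it buys essentially nothing: the lemma is equivalent to the proposition itself (granted the proposition, the first edge of a directed path from $C$ to $B$ in $\Gamma$ is exactly such a step), so the entire difficulty is concentrated in the lemma, and your treatment of it is a plan rather than a proof. Identifying the combinatorial target ($Cf$ with $f=\min(B-C)$ in the ascending case, $C-k+l$ with $k=\max(C-B)$, $l=\min(B-C)$ in the horizontal case) is the trivial part; the actual content is to produce \emph{some} edge of $\Gamma$ leaving $C$ whose head is a nonterminal vertex of $G_T$ and is still $\preceq^\ast B$. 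Here you only gesture at Corollary~\ref{cor:ord-mixed}, Propositions~\ref{pr:Hh} and~\ref{pr:4edges}, plus an unspecified ``detour of one extra step'' around an offending black tile. No argument is given that a suitable white out-edge or white left-tile exists, that its head is nonterminal, or that the head remains $\preceq^\ast$-below $B$; and nothing supports the claim that a black obstruction can be bypassed in a single step. This is precisely where a greedy, local argument is hard, and it is why the paper does not argue locally at all.

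What the paper actually does is an induction on $n$ via the contraction/expansion machinery: for $|A|<|B|$ it applies the $n$-contraction (noting $n\in A$ forces $n\in B$), and for $|A|=|B|$ it applies the $1$-contraction (the $n$-contraction fails exactly when $n\in B-A$, since then $|B-\{n\}|<|A-\{n\}|$); the real work is the structural analysis of Subsections~\ref{ssec:ref_prop}--\ref{ssec:contr_Gamma} (hammocks, the paths $\Pscr_1,\Pscr'_2$, and the lifting statements~\refeq{separ_paths} and~\refeq{H1H2}), which is what allows a directed path in the contracted graph $\Gamma_{T/n}$ or $\Gamma_{T/1}$ to be transported back to a directed path in $\Gamma_T$. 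Your fallback points in this direction but is flawed as stated: you propose contracting the strip of the label $l=\min(B-C)$, whereas Proposition~\ref{pr:contract} guarantees that the $i$-contraction of a g-tiling is again a g-tiling only for $i=n$ and $i=1$ (for a general label $i$ the surface $D_{T/i}$ need not be a disc). Even for the two legal contractions, the induction only goes through once one knows how $\Gamma$ transforms and how critical vertices separate the two sides of the removed strip, i.e. the content of~\refeq{separ_paths} and~\refeq{H1H2}, none of which your proposal supplies. So the proposal, as written, has a genuine gap at its core step.
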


The rest of this section is devoted to proving this proposition, which is
rather long and appeals to results on contractions and expansions mentioned in
Subsection~\ref{ssec:s-c-e}(C,D). Moreover, we need to conduct a more
meticulous analysis of structural features of the graphs $G_T$ and $\Gamma$,
and of the action of contraction operations on $\Gamma$; this is given in
Subsections~\ref{ssec:ref_prop}--\ref{ssec:contr_Gamma}. Using these, we then
prove the desired implication in Subsection~\ref{ssec:proof_impl}.


\subsection{Refined properties of the graphs $G^{fw}$ and $\Gamma$.}
\label{ssec:ref_prop}

We start with one fact which immediately follows from the planarity of
principal trees $K_h$ defined in part D of Subsection~\ref{ssec:s-c-e}.
   \begin{numitem1}
~the edges of $K_h$ can be (uniquely) ordered as $e_1,\ldots,e_p$ so that for
$1\le q<q'\le p$, the edge $e_{q'}$ lies on the right from $e_q$ (in
particular, $e_1=z^\ell_{h-1}z^\ell_h$ and $e_p=z^r_{n-h+1}z^r_{n-h}$);
equivalently, consecutive edges $e_q,e_{q+1}$, with labels $i_q,i_{q+1}$,
respectively, either leave a common vertex and satisfy $i_q<i_{q+1}$, or enter
a common vertex and satisfy $i_q>i_{q+1}$.
  \label{eq:Kh}
  \end{numitem1}

We denote the sequence of edges of $K_h$ in this order by $E_h$. Also we denote
the sequence of vertices of $K_h$ occurring in level $h-1$ (level $h$) and
ordered from left to right by $V^{low}_h$ (resp. $V^{up}_h$).

Recall that the common vertices of two neighboring principal trees
$K_h,K_{h+1}$ are called critical vertices in level $h$. Let $U_h$
denote the sequence of these vertices ordered from left to right:
   $$
   U_h:=V^{up}_h\cap V^{low}_{h+1}.
   $$
The picture below illustrates an example of neighboring principal trees
$K_h,K_{h+1}$; here the critical vertices in level $h$ are indicated by
circles.

 \begin{center}
  \unitlength=1mm
  \begin{picture}(145,25)
  \put(16,0){\circle*{1.5}}
  \put(22,0){\circle*{1.5}}
  \put(31,0){\circle*{1.5}}
  \put(40,0){\circle*{1.5}}
  \put(56,0){\circle*{1.5}}
  \put(78,0){\circle*{1.5}}
  \put(90,0){\circle*{1.5}}
  \put(122,0){\circle*{1.5}}
  \put(10,12){\circle*{1.5}}
  \put(22,12){\circle*{1.5}}
  \put(34,12){\circle*{1.5}}
  \put(48,12){\circle*{1.5}}
  \put(62,12){\circle*{1.5}}
  \put(68,12){\circle*{1.5}}
  \put(74,12){\circle*{1.5}}
  \put(82,12){\circle*{1.5}}
  \put(90,12){\circle*{1.5}}
  \put(98,12){\circle*{1.5}}
  \put(118,12){\circle*{1.5}}
  \put(134,12){\circle*{1.5}}
  \put(10,12){\circle{3}}
  \put(22,12){\circle{3}}
  \put(68,12){\circle{3}}
  \put(74,12){\circle{3}}
  \put(98,12){\circle{3}}
  \put(134,12){\circle{3}}
  \put(16,24){\circle*{1.5}}
  \put(31,24){\circle*{1.5}}
  \put(46,24){\circle*{1.5}}
  \put(65,24){\circle*{1.5}}
  \put(86,24){\circle*{1.5}}
   \put(104,24){\circle*{1.5}}
  \put(116,24){\circle*{1.5}}
  \put(134,24){\circle*{1.5}}
  \put(16,0){\vector(-1,2){5.7}}
  \put(16,0){\vector(1,2){5.7}}
  \put(22,0){\vector(0,1){11.4}}
  \put(31,0){\vector(-3,4){8.7}}
  \put(40,0){\vector(-3,2){17.5}}
  \put(40,0){\vector(2,3){7.7}}
  \put(56,0){\vector(-2,3){7.7}}
  \put(56,0){\vector(1,1){11.5}}
  \put(56,0){\vector(3,2){17.5}}
  \put(78,0){\vector(-1,3){3.8}}
  \put(90,0){\vector(-4,3){15.5}}
  \put(90,0){\vector(0,1){11.4}}
  \put(90,0){\vector(2,3){7.7}}
  \put(122,0){\vector(-2,1){23.5}}
  \put(122,0){\vector(-1,3){3.8}}
  \put(122,0){\vector(1,1){11.5}}
  \put(10,12){\vector(1,2){5.7}}
  \put(22,12){\vector(-1,2){5.7}}
  \put(22,12){\vector(3,4){8.7}}
  \put(22,12){\vector(2,1){23.5}}
  \put(34,12){\vector(1,1){11.5}}
  \put(62,12){\vector(-4,3){15.5}}
  \put(62,12){\vector(1,4){2.9}}
  \put(68,12){\vector(-1,4){2.9}}
  \put(68,12){\vector(3,2){17.5}}
  \put(74,12){\vector(1,1){11.5}}
  \put(82,12){\vector(1,3){3.8}}
  \put(98,12){\vector(-1,1){11.5}}
  \put(98,12){\vector(1,2){5.7}}
  \put(98,12){\vector(3,2){17.5}}
  \put(134,12){\vector(-3,2){17.5}}
  \put(134,12){\vector(0,1){11.4}}
  \put(8,0){$z^\ell_{h-1}$}
  \put(5,11){$z^\ell_h$}
  \put(7,22){$z^\ell_{h+1}$}
  \put(124,0){$z^r_{n-h+1}$}
  \put(136.5,11){$z^r_{n-h}$}
  \put(135.5,22){$z^r_{n-h-1}$}
 \end{picture}
   \end{center}

We need to explore the structure of $G_T$ and $\Gamma$ in a neighborhood of
level $h$ in more details. For vertices $x,y$ of $K_h$, let $P_h(x,y)$ denote
the (unique) path from $x$ to $y$ in $K_h$; in this path the vertices in levels
$h-1$ and $h$ alternate. When two consecutive edges of $P_h(x,y)$ enter their
common vertex, say, $w$ (lying in level $h$), we call $w$ a
$\wedge$-\emph{vertex} in this path; otherwise (when $e,e'$ leave $w$) we call
$w$ a $\vee$-\emph{vertex}. Also we denote by $K_h(x,y)$ the minimal subtree of
$K_h$ containing $x,y$ and all edges incident to \emph{intermediate} vertices
of $P_h(x,y)$.

Consider two consecutive critical vertices $u,v$ in level $h$, where $v$ is the
immediate successor of $u$ in $U_h$. Then the subtrees $K_h(u,v)$ and
$K_{h+1}(u,v)$ intersect exactly at the vertices $u,v$. In particular, the
concatenation of $P_{h+1}(u,v)$ and $P^{rev}_h(u,v)$ forms a simple cycle,
denoted by $C(u,v)=C_h(u,v)$, in the graph $G^{fw}$ induced by the fully white
edges (forming the set $E^{asc}$). Define:

$\Omega(u,v)=\Omega_h(u,v)$ to be the closed region in $Z$ surrounded by
$C(u,v)$;

$\Omega^\ast(u,v)=\Omega^\ast_h(u,v)$ to be the closed region in the disc $D_T$
surrounded by $\sigma(C(u,v))$;

$T(u,v)=T_h(u,v)$ to be the set of tiles $\tau\in T$ such that $\sigma(\tau)$
lies in $\Omega^\ast(u,v)$.

\noindent (For example, in the graph $G_T$ drawn in
Figures~\ref{fig:GT},\ref{fig:GT-Gamma}, the vertices $12$ and $24$ are
consecutive critical vertices in level 2 and the cycle $C(12,24)$
passes $12,123,23,234,24,4,14,1,12$.) Clearly each tile in $T$ belongs
to exactly one set $T_h(u,v)$.

Let $\Cscr$ be the set of cycles $C_h(u,v)$ among all $h,u,v$ as above, and let
$\tilde G$ be the subgraph of $G^{fw}$ that is the union of these cycles; this
subgraph has planar embedding in $Z$ because $G^{fw}$ does so. Observe that
each boundary edge of $Z$ belongs to exactly one cycle in $\Cscr$ and that any
other edge of $\tilde G$ belongs to exactly two such cycles. It follows that
  \begin{numitem1}
the regions $\Omega(\cdot,\cdot)$ give a subdivision of $Z$ and are exactly the
faces of the graph $\tilde G$; similarly, the regions
$\Omega^\ast(\cdot,\cdot)$ give a subdivision of $D_T$ and are the faces of
$\sigma(\tilde G)$; the face structures of the planar graphs $\tilde G$ and
$\sigma(\tilde G)$ are isomorphic (more precisely, the restriction of $\sigma$
to $\tilde G$ can be extended to a homeomorphism of $Z$ to $D_T$ which maps
each $\Omega_h(u,v)$ onto $\Omega^\ast_h(u,v)$).
  \label{eq:Omega_cover}
  \end{numitem1}
Hereinafter, speaking of a face of a planar graph, we mean an inner (bounded)
face.

Any vertex $v$ of a cycle $C(u,v)=C_h(u,v)$ belongs to level $h-1,h$ or $h+1$,
and we call $v$ a \emph{peak} in $C(u,v)$ if it has height $\ne n$, i.e. when
$v$ is either a $\vee$-vertex of $P_h(u,v)$ or a $\wedge$-vertex of
$P_{h+1}(u,v)$. Also we distinguish between two sorts of edges $e$ in
$(K_h(u,v)\cup K_{h+1}(u,v))-C(u,v)$, by saying that $e$ is an \emph{inward
pendant} edge w.r.t. $C(u,v)$ if it lies in $\Omega(u,v)$, and an \emph{outward
pendant} edge otherwise. (In fact, outward pendant edges will not be important
for us later on). See the picture where the peaks are indicated by symbol
$\otimes$, the inward pendant edges by $\iota$, and the outward pendant edges
by $o$.
 \begin{center}
  \unitlength=1mm
  \begin{picture}(80,25)
  \put(45,0){\circle*{1.5}}
  \put(10,12){\circle*{1.5}}
  \put(10,12){\circle*{1.5}}
  \put(18,12){\circle*{1.5}}
  \put(26,12){\circle*{1.5}}
  \put(34,12){\circle*{1.5}}
  \put(42,12){\circle*{1.5}}
  \put(48,12){\circle*{1.5}}
  \put(54,12){\circle*{1.5}}
  \put(60,12){\circle*{1.5}}
  \put(66,12){\circle*{1.5}}
  \put(34,24){\circle*{1.5}}
  \put(43,24){\circle*{1.5}}
  \put(33,0.5){\vector(-2,1){22.5}}
  \put(33.2,1.2){\vector(-2,3){6.9}}
  \put(34.8,1.2){\vector(2,3){6.9}}
  \put(45,0){\vector(-1,4){2.8}}
  \put(53,1){\vector(-1,1){10.5}}
  \put(53.5,1){\vector(-1,2){5.3}}
  \put(54.5,1){\vector(1,2){5.3}}
  \put(55,1){\vector(1,1){10.5}}
  \put(10,12){\vector(1,1){11}}
  \put(34,12){\vector(-1,1){11}}
  \put(34,12){\vector(2,1){23}}
  \put(66,12){\vector(-2,3){7.2}}
  \put(18,12){\vector(1,3){3.5}}
  \put(34,12){\vector(0,1){11.3}}
  \put(34,12){\vector(3,4){8.5}}
  \put(54,12){\vector(1,3){3.5}}
  \put(32.5,-1){$\otimes$}
  \put(52.5,-1){$\otimes$}
  \put(20.5,22.5){$\otimes$}
  \put(56.5,22.5){$\otimes$}
  \put(7,11){$u$}
  \put(67.5,11){$v$}
  \put(30.5,6){$\iota$}
  \put(52,6){$\iota$}
  \put(55.5,6){$\iota$}
  \put(20,16){$\iota$}
  \put(56,16){$\iota$}
  \put(41.5,4){$o$}
  \put(31.5,19){$o$}
  \put(40.5,19){$o$}
\end{picture}
   \end{center}

Clearly each edge in $G^{fw}-\tilde G$ is an inward pendant edge of
exactly one cycle in $\Cscr$.

Let $\mathfrak{S}(u,v)$ be the set of nonterminal vertices $x$ such that
$\sigma(x)\in\Omega^\ast(u,v)$ and $x$ \emph{is not a peak in $C(u,v)$}. The
next lemma describes a number of important properties. Hereinafter $\Gamma^h$
denotes the subgraph of $\Gamma$ induced by the (horizontal) edges in level
$h$.
  \begin{lemma} \label{lm:hor_paths}
~For $h,u,v$ as above:

{\rm(i)} the fully white edges $e$ such that $e\not\in C(u,v)$ and
$\sigma(e)\subset \Omega^\ast(u,v)$ are exactly the inward pendant edges for
$C(u,v)$;

{\rm(ii)} all tiles in $T(u,v)$ are of the same height $h$.

{\rm(iii)} $\mathfrak{S}(u,v)$ is exactly the set of vertices that are
contained in directed paths from $u$ to $v$ in $\Gamma^h$.
  \end{lemma}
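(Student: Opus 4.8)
The plan is to prove the three statements in Lemma~\ref{lm:hor_paths} largely together, since they describe complementary aspects of the same local region $\Omega^\ast(u,v)$. The setup is the cycle $C(u,v)$ which is the concatenation of $P_{h+1}(u,v)$ (running through levels $h,h+1$) and $P^{rev}_h(u,v)$ (running through levels $h-1,h$), bounding the region $\Omega^\ast(u,v)$ inside the disc $D_T$. By \refeq{Omega_cover} these regions tile $D_T$ and are exactly the faces of $\sigma(\tilde G)$, so I can reason face-by-face.

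First I would settle~(ii). Every tile $\tau\in T(u,v)$ has $\sigma(\tau)\subset\Omega^\ast(u,v)$, and the region $\Omega^\ast(u,v)$ sits between levels $h-1$ and $h+1$. A tile spans exactly two consecutive levels, and since its interior cannot poke through the boundary $\sigma(C(u,v))$ (which by Proposition~\ref{pr:Hh} consists of fully white edges of the principal trees $K_h,K_{h+1}$), each such tile must lie strictly between the bounding paths. The key point is that no tile can straddle levels $h-1,h$ \emph{and} a tile straddle levels $h,h+1$ within the same bounded face without a vertex of height $n$ (a peak) separating them; careful bookkeeping of heights, using that $P_h(u,v)$ and $P_{h+1}(u,v)$ meet only at $u,v$, forces all interior tiles to span levels $h-1,h$, i.e.\ to have height $h$ in the paper's convention (bottom at level $h-1$, so height $h$). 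I expect~(ii) to follow cleanly once the boundary structure is pinned down.

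Next, for~(i), I would use that the fully white edges form the planar graph $G^{fw}$, and $\tilde G$ is its subgraph that is the union of all the cycles $C_{h'}(u',v')$. By the observation just before the lemma, each edge of $G^{fw}-\tilde G$ is an inward pendant edge of exactly one cycle. So a fully white edge $e$ with $\sigma(e)\subset\Omega^\ast(u,v)$ and $e\notin C(u,v)$ cannot belong to any \emph{other} cycle (those bound different faces, disjoint in interior from $\Omega^\ast(u,v)$ by \refeq{Omega_cover}), hence $e\in G^{fw}-\tilde G$, hence $e$ is an inward pendant edge of $C(u,v)$; the converse inclusion is immediate from the definition of inward pendant. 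The planarity and the face-isomorphism in \refeq{Omega_cover} are doing the real work here.

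The main obstacle will be~(iii), the statement that $\mathfrak{S}(u,v)$ equals the set of vertices lying on directed $u$-to-$v$ paths in $\Gamma^h$. Here I would argue both containments. For one direction, each horizontal edge $e_\tau$ of $\Gamma^h$ comes from a white tile $\tau$ of height $h$ going from $\ell(\tau)$ to $r(\tau)$; by~(ii) these are precisely the tiles in $T(u,v)$, and their left/right vertices are the non-peak level-$h$ vertices inside $\Omega^\ast(u,v)$, so every vertex on a directed $u\to v$ path in $\Gamma^h$ lies in $\mathfrak{S}(u,v)$. For the reverse, given $x\in\mathfrak{S}(u,v)$ I must build a directed path $u\rightsquigarrow x\rightsquigarrow v$ using horizontal edges; the idea is that $x$, not being a peak, is the right vertex of some white tile and the left vertex of another (via Corollary~\ref{cor:ord-mixed}(iv),(v)), giving an entering and a leaving horizontal edge, and an induction on the horizontal distance inside the region together with planarity keeps the path within $\Omega^\ast(u,v)$ and terminates at the critical endpoints $u,v$. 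The delicate part is verifying that this local ``left-neighbor/right-neighbor'' chain cannot get stuck at a peak or escape the region, which requires combining~(i), the height control from~(ii), and the explicit local tile structure around nonterminal vertices; this interplay is where the proof demands the most care.
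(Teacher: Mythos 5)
Your proposal has genuine gaps at exactly the points where the lemma is nontrivial, and they all trace to one missing idea: nothing in your argument bridges the zonogon picture with the disc picture. For~(i), ``inward pendant'' is \emph{defined} via the region $\Omega(u,v)$ in the zonogon $Z$, while the lemma speaks of $\sigma(e)\subset\Omega^\ast(u,v)$ in the disc $D_T$; since the projection of $D_T$ to $Z$ is not injective (tiles of a g-tiling may overlap in $Z$), an edge pointing into $\Omega(u,v)$ at a cycle vertex could a priori have its $\sigma$-image in a different face of $\sigma(\tilde G)$ that merely projects over $\Omega(u,v)$. So neither your forward step (``hence $e$ is an inward pendant edge of $C(u,v)$'' rather than of some other cycle) nor your converse (``immediate from the definition'') is justified. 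The paper closes precisely this gap: it introduces the tile-adjacency graph $Q$ (tiles adjacent when they share an edge not in $\tilde G$), shows its components are exactly the sets $T_{h'}(u',v')$, and then, via Corollary~\ref{cor:ord-mixed} applied at the vertices $w$ of $C(u,v)$, shows that the tiles $F^1(w)$ hugging the cycle from inside are all $Q$-connected and hence all lie in $T(u,v)$; only after that does the identification of inward pendant edges with interior fully white edges follow.

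For~(ii), your ``careful bookkeeping of heights'' assumes that every point of $\Omega^\ast(u,v)$ has height in $[h-1,h+1]$ because the boundary does. That is exactly what must be proved, and it does not follow from the boundary alone: by axiom~(T3) the height function on $D_T$ has interior local maxima at tops of black tiles and local minima at their bottoms, so a ``mountain'' of tiles of greater height inside the region is not excluded a priori --- ruling it out \emph{is} the content of~(ii). (Also, a tile spans three levels, not two: bottom at $h-1$, left/right at $h$, top at $h+1$.) The paper proves~(ii) \emph{from}~(i): it suffices to show that edge-adjacent tiles of $T(u,v)$ have equal height; if they share a top or a bottom (in particular, if one is black) this is clear, and otherwise their common edge is fully white and interior, hence by~(i) inward pendant, hence incident to a peak, which would force that peak to be the top or bottom of both tiles --- a contradiction. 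Your ordering ((ii) before~(i)) thus discards the very tool the argument needs, and you supply no substitute. Finally, for~(iii) you concede that the hard half (the chain ``cannot get stuck at a peak or escape the region'') is not carried out, and your forward half misuses~(ii): that statement says all tiles of $T(u,v)$ have height $h$, not that every height-$h$ white tile met by a directed path lies in $T(u,v)$. So the proposal is a plausible outline, but the three steps that carry the actual content --- the $Q$-connectivity argument, the adjacent-tiles-have-equal-height argument, and the construction and containment of the horizontal chains --- are missing or incorrectly declared immediate.
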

  \begin{proof}
~Let $Q$ be the graph whose vertices are the tiles in $T$ and whose edges
correspond to the pairs $\tau,\tau'$ of tiles that have a common edge not in
$\tilde G$. One can see that any two tiles in the same set $T_h(u,v)$ are
connected by a path in $Q$. On the other hand, if two tiles occur in different
sets $T_h(u,v)$ and $T_{h'}(u',v')$, then these tiles cannot be connected by a
path in $Q$. Therefore, the connected components of $Q$ correspond to the sets
$T_h(u,v)$. Considering a pair $e,e'$ of consecutive edges in a cycle $C(u,v)$
(which are fully white) and applying Corollary~\ref{cor:ord-mixed} to the
common vertex $w$ of $e,e'$, we observe that the set $F_T(w)$ of tiles at $w$
is partitioned into two subsets $F^1(w),F^2(w)$ such that: (a) the interior of
each tile $\tau$ in $F^1(w)$ meets $\Omega(u,v)$ (in particular, $\tau$ lies
between $e$ and $e'$ when $w$ is a peak in $C(u,v)$), whereas the interior of
each tile in $F^2(w)$ is disjoint from $\Omega(u,v)$; (b) the tiles in $F^1(w)$
belong to a path in $Q$; and (c) each inward pendant edge at $w$ w.r.t.
$C(u,v)$ (if any) belongs to some tile in $F^1(w)$. It follows that all tiles
in the set $\Fscr:=\cup(F^1(w)\colon w\in C(u,v))$ belong to the same component
of $Q$. Then all squares in $\sigma(\Fscr)$ are contained in one face of
$\sigma(\tilde G)$, and at the same time, they cover the cycle
$\sigma(C(u,v))$. This is possible only if $\Fscr\subseteq T(u,v)$. Now (i)
easily follows.

Next, observe that for any vertex $w$ of $C(u,v)$, the set $F^1(w)$ as above
contains a tile of height $h$. Therefore, in order to obtain~(ii), it suffices
to show that any two tiles $\tau,\tau'\in T(u,v)$ sharing an edge $e$ have the
same height. This is obvious when $\tau,\tau'$ have either the same top or the
same bottom (in particular, when one of these tiles is black). Suppose this is
not the case. Then both tiles are white, and the edge $e$ connects the left or
right vertex of one of them to the right or left vertex of the other. So both
ends of $e$ are nonterminal and $e$ is fully white. By~(i), $e$ is an inward
pendant edge for $C(u,v)$ and one end $x$ of $e$ is a peak. Therefore, both
$\tau,\tau'$ belong to the set $F^1(x)$ and lie between the two edges of
$C(u,v)$ incident to $x$. But both edges either enter $x$ or leave $x$ (since
$x$ is a peak), implying that $x$ must be either the top or the bottom of both
$\tau,\tau'$; a contradiction. Thus, (ii) is valid.

Finally, to see~(iii), consider a vertex $x\in\mathfrak{S}(u,v)$ different from
$u$. Then $x$ is not in $\ell bd(Z)$; for otherwise we would have $u=z_h^\ell$
and $x\in\{z_{h-1}^\ell,z_{h+1}^\ell\}$, implying that $x$ is a peak in
$C(u,v)$. Therefore (cf. Corollary~\ref{cor:ord-mixed}(iv)), there exists a
white tile $\tau$ such that $r(\tau)=x$. Both $x,\tau$ have the same height.
Suppose that $\tau\notin T(u,v)$. Then $x\in C(u,v)$ and $x$ is of height $h$
(since $x$ is not a peak in $C(u,v)$). So $\tau$ is of height $h$ as well, and
in view of~(ii), $\tau$ belongs to some collection $T_h(u',v')\ne T_h(u,v)$.
One can see that the latter is possible only if $v'=u$, implying $x=u$; a
contradiction.

Hence, $\tau$ belongs to $T(u,v)$ and has height $h$. Take the vertex
$y:=\ell(\tau)$. Then $y$ is nonterminal, $\sigma(y)\in\Omega^\ast(u,v)$, and
there is a horizontal edge from $y$ to $x$ in $\Gamma$. Also $y$ is not a peak
in $C(u,v)$ (since $y$ is of height $h$). So $y\in\mathfrak{S}(u,v)$. Apply a
similar procedure to $y$, and so on. Eventually, we reach the vertex $u$,
obtaining a directed path in $\Gamma^h$ going from $u$ to the initial vertex
$x$. A directed path from $x$ to $v$ is constructed in a similar way.

Conversely, let $P$ be a directed path from $u$ to $v$ in $\Gamma^h$. The fact
that all vertices of $P$ belong to $\mathfrak{S}(u,v)$ is easily shown by
considering the sequence of white tiles corresponding to the edges of $P$ and
using the fact that all these tiles have height $h$.

Thus,~(iii) is valid and the lemma is proven.
  \end{proof}

Let the sequence $U_h$ consist of (critical) vertices
$u_0=z^\ell_h,u_1,\ldots,u_{r-1},u_r=z^r_{n-h}$. We abbreviate
$T_h(u_{p-1},u_p)$ as $T_h(p)$, and denote by $G^h(p)$ the subgraph of $G_T$
whose image by $\sigma$ lies in $\Omega^\ast_h(u_{p-1},u_p)$. By~(ii) in
Lemma~\ref{lm:hor_paths}, $T_h(1),\ldots,T_h(r)$ give a partition of the set of
tiles of height $h$.

In its turn, (iii) in this lemma shows that the graph $\Gamma^h$ is
represented as the concatenation of $\Gamma^h(1),\ldots,\Gamma^h(r)$,
where each graph $\Gamma^h(p)$ is the union of (horizontal) directed
paths from $u_{p-1}$ to $u_p$ in $\Gamma$. We refer to $\Gamma^h(p)$ as
$p$-th \emph{hammock of $\Gamma^h$ \emph{(or in level $h$)} beginning
at $u_{p-1}$ and ending at $u_p$}, and similarly for the subgraph
$\sigma(\Gamma^h(p))$ of $\sigma(\Gamma^h)$. The fact that
$\sigma(\Gamma^h(p))$ is the union of directed paths from
$\sigma(u_{p-1})$ to $\sigma(u_p)$ easily implies that
  \begin{numitem1}
the boundary of each face $F$ of the hammock $\sigma(\Gamma^h(p))$ is
formed by two directed paths with the same beginning $x$ and the same
end $y$;
   \label{eq:cell}
   \end{numitem1}
we say that the face $F$ \emph{begins at $x$ and ends at $y$}. The
\emph{extended hammock} $\bar\Gamma^h(p)$ is constructed by adding to
$\Gamma^h$ the cycle $C(u_{p-1},u_p)$ and the inward pendant edges for it (all
added edges are ascending in $\Gamma$); this is just the subgraph of $\Gamma$
whose image by $\sigma$ is contained in $\Omega^\ast(u_{p-1},u_p)$.

Applying Lemma~\ref{lm:lattice} to the planar graph $\sigma(\Gamma^h)$, we
obtain that
   \begin{numitem1}
the partial order $(\mathfrak{S}^h,\prec_{\Gamma^h})$, where
$\mathfrak{S}^h:=\{X\in\mathfrak{S}_T\colon |X|=h\}$, is a lattice with
the minimal element $z^\ell_h$ and the maximal element $z^r_{n-h}$;
similarly, for each $p=1,\ldots,r$,
~$(\mathfrak{S}(u_{p-1},u_p),\prec_{\Gamma^h(p)})$ is a lattice with
the minimal element $u_{p-1}$ and the maximal element $u_p$.
   \label{eq:p-o-h}
   \end{numitem1}

An example of $\sigma(\Gamma^h)$ with $r=5$ is drawn in the picture; here all
edges are directed from left to right.
 \begin{center}
  \unitlength=1mm
  \begin{picture}(140,22)
  \put(10,10){\circle*{2}}
  \put(30,10){\circle*{2}}
  \put(45,10){\circle*{2}}
  \put(70,10){\circle*{2}}
  \put(100,10){\circle*{2}}
  \put(130,10){\circle*{2}}
  \put(20,4){\circle*{1.5}}
  \put(18,16){\circle*{1.5}}
  \put(10,10){\line(4,3){8}}
  \put(10,10){\line(5,-3){10}}
  \put(20,4){\line(5,3){10}}
  \put(18,16){\line(2,-1){12}}
  \put(30,10){\line(1,0){15}}
  \put(-1,9){$\sigma(z^\ell_h)$}
  \put(132,9){$\sigma(z^r_{n-h})$}
  \put(28,6){$\sigma(u_1)$}
  \put(37,12){$\sigma(u_2)$}
  \put(65,0){$\sigma(u_3)$}
  \put(95,0){$\sigma(u_4)$}
  \put(0,18){\vector(1,0){12}}
  \put(70,3){\vector(0,1){4}}
  \put(100,3){\vector(0,1){4}}
  \qbezier(45,10)(57.5,22)(70,10)
  \qbezier(45,10)(57.5,-2)(70,10)
  \put(51.2,5.8){\line(3,2){12.6}}
  \put(51,5.4){\circle*{1.5}}
  \put(64,14.4){\circle*{1.5}}
  \put(51,14.4){\circle*{1.5}}
  \put(57.5,16){\circle*{1.5}}
  \put(61,4.5){\circle*{1.5}}
  \qbezier(70,10)(85,28)(100,10)
  \qbezier(70,10)(85,-8)(100,10)
  \put(77,16){\line(4,-3){16}}
  \put(76.6,16.2){\circle*{1.5}}
  \put(93.5,3.6){\circle*{1.5}}
  \put(83,11.5){\circle*{1.5}}
  \put(87,8.5){\circle*{1.5}}
  \put(83,11.5){\line(2,1){10}}
  \put(87,8.5){\line(-2,-1){10}}
  \put(92.8,16.5){\circle*{1.5}}
  \put(77.2,3.5){\circle*{1.5}}
  \put(85,18.8){\circle*{1.5}}
  \put(85,1){\circle*{1.5}}
  \qbezier(100,10)(115,28)(130,10)
  \qbezier(100,10)(115,-8)(130,10)
  \put(107,16){\line(4,-3){16}}
  \put(107,4){\line(4,3){16}}
  \put(106.8,16.1){\circle*{1.5}}
  \put(115,10){\circle*{1.5}}
  \put(123.2,16.1){\circle*{1.5}}
  \put(123.2,3.9){\circle*{1.5}}
  \put(106.8,3.9){\circle*{1.5}}
  \put(115,18.8){\circle*{1.5}}
  \put(115,1){\circle*{1.5}}
\end{picture}
   \end{center}

We call a hammock $\Gamma^h(p)$ \emph{trivial} if it has only one edge (which
goes from $u_{p-1}$ to $u_p$). In this case $T_h(p)$ consists of a single white
tile $\tau$ such that both $b(\tau),t(\tau)$ are nonterminal,
$\ell(\tau)=u_{p-1}$ and $r(\tau)=u_p$ (then $\bar \Gamma^h(p)$ is formed by
the four edges of $\tau$ and the horizontal edge from $\ell(\tau)$ to
$r(\tau)$).\medskip


\subsection{Nontrivial hammocks}
\label{ssec:hamm}

Next we describe the structure of a \emph{nontrivial} hammock $\Gamma^h(p)$.
For a white tile in $T_h(p)$ (which, obviously, exists), at least one of its
bottom and top vertices is terminal (for otherwise all edges of this tile are
fully white, implying that its left and right vertices are critical). Then
$|T_h(p)|\ge 2$ and the set $T^b_h(p)$ of black tiles in $T_h(p)$ is nonempty.
We are going to show a one-to-one correspondence between the black tiles and
the faces of $\sigma(\Gamma^h(p))$.

Given a black tile $\tau\in T^b_h(p)$, consider the sequence $x_0,\ldots,x_k$
of the end vertices of the edges $e_0,\ldots,e_k$ leaving $b(\tau)$ and ordered
from left to right (i.e. by increasing their labels), and the sequence
$y_0,\ldots,y_{k'}$ of the beginning vertices of the edges
$e'_0,\ldots,e'_{k'}$ entering $t(\tau)$ and ordered from left to right. Then
$e_0,e_k,e'_0,e'_{k'}$ are the edges of $\tau$, the other edges $e_q,e'_{q'}$
are semi-white, $x_0=y_0=\ell(\tau)$ and $x_k=y_{k'}=r(\tau)$. Also each pair
$e_{q-1},e_q$ belongs to a white tile $\tau_q$, each pair $e'_{q'-1},e'_{q'}$
belongs to a white tile $\tau'_{q'}$, and there are no other tiles having a
vertex at $b(\tau)$ or $t(\tau)$, except for $\tau$. For two consecutive tiles
$\tau_q,\tau_{q+1}$ (resp. $\tau'_{q'},\tau'_{q'+1}$), we have
$r(\tau_q)=\ell(\tau_{q+1})=x_q$ (resp.
$r(\tau'_{q'})=\ell(\tau'_{q'+1})=y_{q'}$). Therefore, the sequence
$(x_0,\ldots,x_k)$ gives a directed path in $\Gamma$, denoted by $\gamma_\tau$,
and similarly,  $(y_0,\ldots,y_{k'})$ gives a directed path in $\Gamma$,
denoted by $\beta_\tau$. Both paths go from $\ell(\tau)$ to $r(\tau)$ and have
no other common vertices (since $x_q=y_{q'}$ for some $0<q<k$ and $0<q'<k'$
would lead to a contradiction with~\refeq{4edges}).

We denote $\beta_\tau\cup\gamma_\tau$ by $\zeta_\tau$, regarding it
both as a graph and as the simple cycle in which the edges of
$\gamma_\tau$ are forward. The closed region in $D_T$ surrounded by
$\sigma(\zeta_\tau)$ (which is a disc) is denoted by $\rho_\tau$. We
call $\beta_\tau$ and $\gamma_\tau$ the \emph{lower} and \emph{upper}
paths in $\zeta_\tau$, respectively, and similarly for the paths
$\sigma(\gamma_\tau)$ and $\sigma(\beta_\tau)$ in $\sigma(\zeta_\tau)$
(a motivation will be clearer later).

Any white tile in $T_h(p)$ has the bottom or top in common with some black
tile. This implies that the graph $\Gamma^h(p)$ is exactly the union of cycles
$\zeta_\tau$ over $\tau\in T^b_h(p)$. Moreover, each edge $e$ of $\Gamma^h(p)$
with $\sigma(e)$ not in the boundary of $\sigma(\Gamma^h(p))$ belongs to two
cycles as above (since such an $e$ is the diagonal of a white tile in which
both the bottom and top vertices are terminal). These facts are strengthened as
follows.
  \begin{lemma} \label{lm:rho-face}
The regions $\rho_\tau$, $\tau\in T^b_h(p)$, are exactly the faces of the graph
$\sigma(\Gamma^h(p))$.
   \end{lemma}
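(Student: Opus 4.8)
The plan is to set up a bijection $\tau\mapsto\rho_\tau$ between the black tiles in $T^b_h(p)$ and the (inner) faces of the planar graph $\sigma(\Gamma^h(p))$. Since each $\sigma(\zeta_\tau)$ is a simple cycle lying in $\sigma(\Gamma^h(p))$ and bounding the disc $\rho_\tau$, every $\rho_\tau$ is automatically a union of faces, so the whole argument reduces to two claims: (a) each $\rho_\tau$ is a \emph{single} face, and (b) every inner face equals some $\rho_\tau$. For (b) I would not try to show directly that the $\rho_\tau$ cover the hammock region; instead I would exploit the incidence facts already recorded just before the lemma, namely that $\Gamma^h(p)=\bigcup_{\tau\in T^b_h(p)}\zeta_\tau$, that every edge of $\sigma(\Gamma^h(p))$ interior to the hammock lies on exactly two cycles $\zeta_\tau$, and that every boundary edge lies on exactly one.

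First I would prove (a). Fix $\tau\in T^b_h(p)$ and examine the two terminal vertices $b(\tau),t(\tau)$. By axiom (T3) all edges at $b(\tau)$ leave it and all edges at $t(\tau)$ enter it, and by Corollary~\ref{cor:termv} neither vertex is adjacent to any other terminal vertex. Hence the white tiles incident to $b(\tau)$ are exactly $\tau_1,\dots,\tau_k$, spanning consecutive pairs $e_{q-1},e_q$ of the edges leaving $b(\tau)$, whose horizontal diagonals are precisely the edges of $\gamma_\tau$; symmetrically the white tiles incident to $t(\tau)$ are $\tau'_1,\dots,\tau'_{k'}$, with diagonals forming $\beta_\tau$. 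Consequently the tiles whose squares meet the interior of $\rho_\tau$ are only $\tau$ together with these $\tau_q,\tau'_{q'}$: any further square inside $\rho_\tau$ would force, via planarity and the grading of $G_T$ used in~\refeq{4edges}, an extra edge joining $b(\tau)$ or $t(\tau)$ to a nonterminal vertex, contradicting the complete description of $F_T(b(\tau))$ and $F_T(t(\tau))$. Among the edges of all these tiles the only horizontal ones are the diagonals of the $\tau_q$ and the $\tau'_{q'}$, and these all lie on $\gamma_\tau\cup\beta_\tau=\sigma(\zeta_\tau)=\partial\rho_\tau$. Thus no edge or vertex of $\sigma(\Gamma^h(p))$ lies in the interior of $\rho_\tau$, and $\rho_\tau$ is a single face.

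To finish, I would first note that distinct black tiles yield distinct faces: by (T3) and Corollary~\ref{cor:termv} a white tile cannot be incident to the bottom or top of two different black tiles, so the tile collections lying in $\rho_\tau$ and $\rho_{\tau'}$ are disjoint for $\tau\ne\tau'$, whence $\rho_\tau\ne\rho_{\tau'}$. Then the counting closes the argument. In the planar graph $\sigma(\Gamma^h(p))$ every interior edge bounds exactly two inner faces and every boundary edge exactly one (no edge is a bridge, since $\Gamma^h(p)$ is a union of cycles). By (a) each such bounding inner face is one of the $\rho_\tau$, while by the recalled incidence facts an interior edge lies on exactly two cycles $\zeta_\tau$ and a boundary edge on exactly one. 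Matching these two counts edge by edge shows that the two inner faces along an interior edge are exactly the two $\rho_\tau$ through it, and the unique inner face along a boundary edge is the unique such $\rho_\tau$; since the hammock is connected, every inner face is reached this way and hence equals some $\rho_\tau$, giving (b).

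The main obstacle is claim (a), and within it the precise bookkeeping in the disc $D_T$: because the square $\sigma(\tau)$ of a \emph{black} tile carries the reversed orientation, one must verify that in $D_T$ this square sits \emph{between} the diagonal paths $\sigma(\gamma_\tau)$ and $\sigma(\beta_\tau)$, so that $\rho_\tau$ really is the region enclosed by $\sigma(\zeta_\tau)$ and is contained in the union of the squares of $\tau,\tau_q,\tau'_{q'}$, rather than lying outside them. Once this orientation point is settled using Corollary~\ref{cor:ord-mixed} and the local picture at $b(\tau)$ and $t(\tau)$, the remaining planarity-and-counting bookkeeping is routine.
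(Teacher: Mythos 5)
Your skeleton (each $\rho_\tau$ is a single face; every inner face is some $\rho_\tau$ by edge-counting plus connectivity) is reasonable, but the proposal has a genuine gap, and it sits exactly at the point you yourself flag as ``the main obstacle'': that obstacle \emph{is} the content of the lemma, and you never resolve it. Your claim (a) rests on the assertion that the only squares meeting the interior of $\rho_\tau$ are those of $\tau$, the $\tau_q$ and the $\tau'_{q'}$, justified by saying that any further square inside $\rho_\tau$ ``would force an extra edge joining $b(\tau)$ or $t(\tau)$ to a nonterminal vertex.'' That is a non sequitur: a square lying deep inside $\rho_\tau$, away from the two terminal vertices, need not touch $b(\tau)$ or $t(\tau)$ at all, so no contradiction with the known structure of $F_T(b(\tau))$ and $F_T(t(\tau))$ arises. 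What is actually needed is the positive covering statement that $\sigma(\tau)$ together with suitable halves of the squares $\sigma(\tau_q),\sigma(\tau'_{q'})$ covers the region bounded by $\sigma(\zeta_\tau)$ (equivalently, that in $D_T$ the reversed black square sits between $\sigma(\gamma_\tau)$ and $\sigma(\beta_\tau)$); you defer precisely this to your closing paragraph and declare the rest ``routine.'' The paper's proof consists of carrying out exactly this step: it defines $R_\tau$ as the union of $\sigma(\tau)$, the triangles with vertices $\sigma(\ell(\tau')),\sigma(r(\tau')),\sigma(b(\tau'))$ over the white tiles $\tau'$ sharing the bottom with $\tau$, and the triangles with vertices $\sigma(\ell(\tau')),\sigma(r(\tau')),\sigma(t(\tau'))$ over the white tiles sharing the top with $\tau$; these pieces glue along the four edges of $\sigma(\tau)$ and along the consecutive edges at the two terminal vertices, so $R_\tau$ is a disc whose boundary is exactly $\sigma(\zeta_\tau)$, whence $R_\tau=\rho_\tau$. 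All the rest (your (a) and (b)) then follows at once, since the interiors of distinct $R_\tau$ are pairwise disjoint.

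A secondary but real defect: your distinctness argument in (b) asserts that ``a white tile cannot be incident to the bottom or top of two different black tiles, so the tile collections lying in $\rho_\tau$ and $\rho_{\tau'}$ are disjoint.'' As stated this is false: a white tile whose bottom \emph{and} top are both terminal meets two of the regions — indeed this is exactly the source of the interior edges you count. Moreover, your edge-matching step silently uses that the two cycles $\zeta_\tau,\zeta_{\tau''}$ through an interior edge $e$ bound faces on \emph{opposite} sides of $\sigma(e)$ (otherwise two cycles could bound the same face and the count would not close). Neither fact follows from the counting you set up; both come for free from the half-square construction, since the two halves of such a white tile lie on opposite sides of its diagonal, in the two distinct regions. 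In short, you identified the correct statement structure but left its geometric core unproved.
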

\begin{proof} ~For such a $\tau$, form the region $R_\tau$ in $D_T$ to be the
union of the square $\sigma(\tau)$, the triangles (half-squares) with the
vertices $\sigma(\ell(\tau')),\sigma(r(\tau')),\sigma(b(\tau'))$ over all white
tiles $\tau'\in T$ having the common bottom with $\tau$, and the triangles
(half-squares) with the vertices
$\sigma(\ell(\tau')),\sigma(r(\tau')),\sigma(t(\tau'))$ over all white tiles
$\tau'\in T$ having the common top with $\tau$. One can see that $R_\tau$ is a
disc and its boundary is just $\sigma(\zeta_\tau)$. So $R_\tau=\rho_\tau$.
Obviously, the regions $R_\tau$, $\tau\in T^b_h(p)$, have pairwise disjoint
interiors.
 \end{proof}

Thus, the faces of $\sigma(\Gamma^h(p))$ are generated by the black tiles in
$T_h(p)$; each face $\rho_\tau$ contains $\sigma(\tau)$, begins at $\ell(\tau)$
and ends at $r(\tau)$. Figure~\ref{fig:hammock} illustrates an example with two
black tiles: the subgraph $G^h(p)$ of $G_T$ and the extended hammock for it.
\begin{figure}[htb]
  \begin{center}
  \unitlength=1mm
  \begin{picture}(145,32)
     \put(0,-3){\begin{picture}(70,35)
{\thicklines
  \put(10,18){\line(1,1){12}}
  \put(10,18){\line(4,-3){16}}
  \put(38,18){\line(-1,-1){12}}
  \put(38,18){\line(-4,3){16}}
  \put(26,18){\line(3,2){18}}
  \put(26,18){\line(4,-3){16}}
  \put(60,18){\line(-4,3){16}}
  \put(60,18){\line(-3,-2){18}}
}
  \put(10,18){\line(-1,2){6}}
  \put(26,6){\line(-1,2){6}}
  \put(4,30){\line(4,-3){16}}
  \put(26,18){\line(-1,3){4}}
  \put(10,18){\line(1,-3){4}}
  \put(26,18){\line(-1,-1){12}}
  \put(38,18){\line(-1,2){6}}
  \put(20,18){\line(1,1){12}}
  \put(38,18){\line(1,-3){4}}
  \put(60,18){\line(-1,3){4}}
  \put(38,18){\line(3,2){18}}
  \put(26,18){\line(2,-3){8}}
  \put(44,30){\line(2,-3){8}}
  \put(60,18){\line(2,-3){8}}
  \put(52,18){\line(-3,-2){18}}
  \put(52,18){\line(4,-3){16}}
  \put(10,18){\circle*{1.8}}
  \put(38,18){\circle*{1.2}}
  \put(26,18){\circle*{1.2}}
  \put(60,18){\circle*{1.8}}
  \put(4,30){\circle*{1.2}}
  \put(32,30){\circle*{1.2}}
  \put(56,30){\circle*{1.2}}
  \put(20,18){\circle*{1.2}}
  \put(52,18){\circle*{1.2}}
  \put(14,6){\circle*{1.2}}
  \put(34,6){\circle*{1.2}}
  \put(68,6){\circle*{1.2}}
  \put(24.5,5){$\blacklozenge$}
  \put(20.5,28.5){$\blacklozenge$}
  \put(42.8,28.5){$\blacklozenge$}
  \put(40.5,5){$\blacklozenge$}
  \put(1,17){$u_{p-1}$}
  \put(61.5,17.5){$u_p$}
  \put(13,2){$a$}
  \put(22,2){$b(\tau)$}
  \put(33,2){$a'$}
  \put(40,2){$b(\tau')$}
  \put(67,2){$a''$}
  \put(3,32){$c$}
  \put(18,32){$t(\tau)$}
  \put(31,32){$c'$}
  \put(40,32){$t(\tau')$}
  \put(54,32){$c''$}
  \put(17,16){$x$}
  \put(23,18){$y$}
  \put(39,16.5){$x'$}
  \put(47.5,17){$y'$}
  \put(-2,0){in $G_T$:}
  \put(73,25){\line(1,0){10}}
  \put(73,23){\line(1,0){10}}
  \put(85,24){\line(-2,1){4}}
  \put(85,24){\line(-2,-1){4}}
    \end{picture}}
     \put(100,-5){\begin{picture}(45,40)
  \put(0,22){\circle*{1.5}}
  \put(12,16){\circle*{1.5}}
  \put(12,28){\circle*{1.5}}
  \put(24,10){\circle*{1.5}}
  \put(24,22){\circle*{1.5}}
  \put(36,16){\circle*{1.5}}
  \put(0,22.4){\line(2,-1){24}}
  \put(12,28.4){\line(2,-1){24}}
  \put(0,22.4){\line(2,1){12}}
  \put(12,16.4){\line(2,1){12}}
  \put(24,10.4){\line(2,1){12}}
  \put(0,21.6){\line(2,-1){24}}
  \put(12,27.6){\line(2,-1){24}}
  \put(0,21.6){\line(2,1){12}}
  \put(12,15.6){\line(2,1){12}}
  \put(24,9.6){\line(2,1){12}}
  \put(3.6,36){\circle*{1.5}}
  \put(0,22){\vector(1,4){3.4}}
  \put(12,28){\vector(-1,1){8}}
  \put(20.4,36){\circle*{1.5}}
  \put(24,22){\vector(-1,4){3.4}}
  \put(12,28){\vector(1,1){8}}
  \put(32.4,30){\circle*{1.5}}
  \put(36,16){\vector(-1,4){3.4}}
  \put(24,22){\vector(1,1){8}}
  \put(3.6,8){\circle*{1.5}}
  \put(3.6,8){\vector(-1,4){3.4}}
  \put(3.6,8){\vector(1,1){8}}
  \put(15.6,2){\circle*{1.5}}
  \put(15.6,2){\vector(-1,4){3.4}}
  \put(15.6,2){\vector(1,1){8}}
  \put(32.4,2){\circle*{1.5}}
  \put(32.4,2){\vector(1,4){3.4}}
  \put(32.4,2){\vector(-1,1){8}}
  \put(-8,21){$\hat u_{p-1}$}
  \put(37,15){$\hat u_p$}
  \put(0.5,6){$\hat a$}
  \put(12,0){$\hat a'$}
  \put(33.5,0){$\hat a''$}
  \put(1,36){$\hat c$}
  \put(21.5,36){$\hat c'$}
  \put(33.5,30){$\hat c''$}
  \put(11,30){$\hat x$}
  \put(10,11){$\hat y$}
  \put(24,25){$\hat x'$}
  \put(22.5,5.5){$\hat y'$}
  \put(10.5,21.5){$\rho_\tau$}
  \put(22,15){$\rho_{\tau'}$}
  \put(-15,3){in $\sigma(\Gamma)$:}
    \end{picture}}
\end{picture}
   \end{center}
\caption{\small On the left: an instance of $G^h(p)$. Here $T_h(p)$
consists of two black tiles $\tau,\tau'$ and seven white tiles. The
corresponding cycle $C(u_{p-1},u_p)$ contains all nonterminal vertices
(there is no inward pendant edge). On the right: the extended hammock
$\sigma(\bar\Gamma^h(p)$. Here the hammock $\sigma(\Gamma^h(p)$) is
indicated by double lines (the edges should be directed to the right),
and for a nonterminal vertex $\ast$ of $G_T$, we write $\hat\ast$ for
$\sigma(\ast)$.}
 \label{fig:hammock}
  \end{figure}
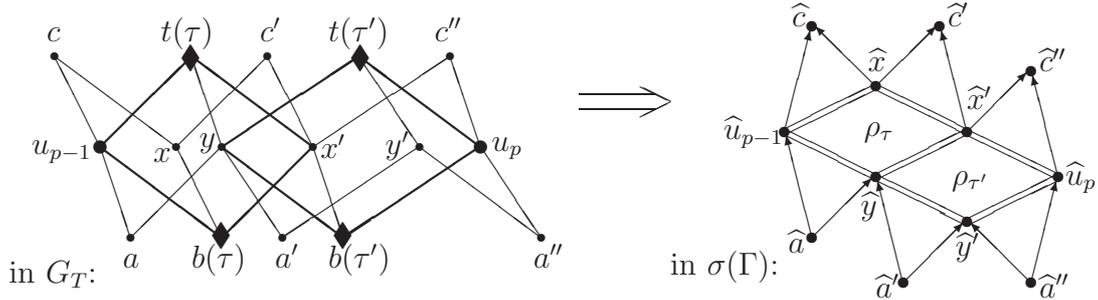

A further refinement shows that the pairwise intersections of cycles
$\zeta_\tau$ are poor.
  \begin{lemma} \label{lm:intersect}
For distinct $\tau,\tau'\in T^b_h(p)$, let
$\zeta_\tau\cap\zeta_{\tau'}\ne\emptyset$. Then the intersection of these
cycles is contained in the upper path of one of them and in the lower path of
the other, and it consists of a single vertex or a single edge.
  \end{lemma}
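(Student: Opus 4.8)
The plan is to read the statement through the face structure of the planar graph $\sigma(\Gamma^h(p))$ supplied by Lemma~\ref{lm:rho-face}. By that lemma the regions $\rho_\tau$ and $\rho_{\tau'}$ are two \emph{distinct} faces of $\sigma(\Gamma^h(p))$, a planar acyclic graph all of whose edges point from left to right and which has a unique source $\sigma(u_{p-1})$ and unique sink $\sigma(u_p)$, both on the outer face. Thus $\zeta_\tau\cap\zeta_{\tau'}$ is nothing but the common boundary of two distinct faces, and the task is to show that this common boundary is a single vertex or a single edge lying on $\gamma$ of one cycle and $\beta$ of the other. I would set things up so that the two poles are fixed on the outer boundary, so that the standard properties of such ``source--sink'' plane graphs are available.

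The combinatorial heart, which I would carry out first, is a uniqueness argument for shared edges. Any edge common to $\zeta_\tau$ and $\zeta_{\tau'}$ is interior to $\sigma(\Gamma^h(p))$, hence is the diagonal of a white tile $w$ both of whose vertices $b(w),t(w)$ are terminal; the region just below it belongs to the face $\rho_\tau$ with $b(\tau)=b(w)$, so the edge lies on the upper path $\gamma_\tau$, and the region just above it belongs to the face $\rho_{\tau'}$ with $t(\tau')=t(w)$, so the edge lies on the lower path $\beta_{\tau'}$. Now a white tile is determined by its bottom and top (since by (T1) all tiles are distinct and a tile is recovered from its four vertices), so two distinct shared edges both lying in $\gamma_\tau\cap\beta_{\tau'}$ would be diagonals of white tiles having the common bottom $b(\tau)$ and common top $t(\tau')$, forcing them to coincide --- a contradiction. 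Hence each of the two ``orientation classes'' $\gamma_\tau\cap\beta_{\tau'}$ and $\beta_\tau\cap\gamma_{\tau'}$ contains at most one shared edge. (Here (T3), which forbids two black tiles from sharing a bottom or a top, rules out the degenerate coincidences $b(\tau)=b(\tau')$ or $t(\tau)=t(\tau')$.)

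To finish I would invoke two topological facts about such plane graphs. First, the ``below/above'' relation on faces is acyclic: orienting the planar dual by sending each edge from the face lying below it to the face lying above yields an acyclic graph, so one of $\rho_\tau,\rho_{\tau'}$ lies consistently below the other; consequently all shared edges belong to a single class, say $\gamma_\tau\cap\beta_{\tau'}$, which excludes the ``wrap-around'' possibility of one shared edge in each class. Second, the common boundary of two distinct faces of a $2$-connected plane graph is connected, hence is a proper connected sub-arc of the cycle $\zeta_\tau$; combined with the edge bound above, this arc contains at most one edge and is therefore either a single edge or a single vertex, automatically lying on the upper path of $\zeta_\tau$ and the lower path of $\zeta_{\tau'}$, as required. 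The main obstacle is precisely this topological bookkeeping --- pinning down the connectedness of the shared boundary and the acyclicity of the face order (and disposing of the purely degenerate single-vertex case in the same spirit) --- whereas the white-tile uniqueness that bounds the number of shared edges is clean and is the real content of the lemma.
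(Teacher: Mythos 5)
Your analysis of shared \emph{edges} is sound and close in spirit to the paper's: an interior edge of $\sigma(\Gamma^h(p))$ is the diagonal of a white tile whose bottom and top are both terminal, a tile is determined by its bottom and top vertices, and (T3) excludes $b(\tau)=b(\tau')$ and $t(\tau)=t(\tau')$, so each of the two mixed classes $\gamma_\tau\cap\beta_{\tau'}$ and $\beta_\tau\cap\gamma_{\tau'}$ contains at most one shared edge. The genuine gap is in your final step, where you dispose of shared \emph{vertices} by invoking the claim that the common boundary of two distinct faces of a $2$-connected plane graph is connected. That claim is false in general: take four internally disjoint paths from $s$ to $t$, each with an internal vertex, embedded in the plane in this order; the resulting graph is $2$-connected, is even a source--sink planar hammock of exactly the kind considered here (all edges directed from $s$ to $t$ side, unique source and sink on the outer face), yet the first and third of its three inner faces intersect precisely in the two isolated vertices $s$ and $t$. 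So connectedness of a common face boundary does not follow from planarity, $2$-connectivity, or the source--sink structure (and $\sigma(\Gamma^h(p))$ need not even be $2$-connected). Ruling out two shared vertices that are not joined by an edge is exactly the hard case of the lemma; it cannot be settled by ``topological bookkeeping'' because it is not true for abstract plane hammocks --- it needs the tiling structure.

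This is precisely where the paper does its real work. It first shows, via overlapping tiles and Corollary~\ref{cor:ord-mixed}(c), that $\gamma_\tau$ and $\gamma_{\tau'}$ (and likewise $\beta_\tau$ and $\beta_{\tau'}$) cannot meet at a vertex intermediate in both paths. Then, assuming $w,w'\in\gamma_\tau\cap\beta_{\tau'}$, it observes that $b(\tau),w,t(\tau'),w'$ are joined by four edges of $G_T$ forming a cycle; by the graded-graph property~\refeq{4edges} these four edges span a tile, and a further local analysis (again using Corollary~\ref{cor:ord-mixed}) shows that this tile is a \emph{white tile of} $T$, so that $w$ and $w'$ are in fact joined by a horizontal edge of $\Gamma$; uniqueness of the tile with given bottom $b(\tau)$ and top $t(\tau')$ then caps the intersection at a single edge. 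This tile-spanning argument is the content of the lemma --- it is what makes the tiling hammock better behaved than a general plane hammock --- and it is absent from your proposal. Your two observations (the per-class edge bound, and the acyclic left-to-right order on faces that excludes one shared edge in each class) could serve as a wrapper around it, but they do not replace it.
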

  \begin{proof}
~Suppose that $\gamma_\tau$ and $\gamma_{\tau'}$ have a common vertex $w$ and
this vertex is intermediate in both paths. Assuming that the label of the edge
$(b(\tau),w)$ is smaller than the label of the edge $(b(\tau'),w)$, take the
tile $\tilde\tau\in F_T(b(\tau))$ with $r(\tilde\tau)=w$ and the tile
$\tilde\tau'\in F_T(b(\tau'))$ with $\ell(\tilde\tau')=w$ (which exist since
$w\ne\ell(\tau)$ and $w\ne r(\tau')$). Then $\tilde\tau$ and $\tilde\tau'$
overlap, contrary to Corollary~\ref{cor:ord-mixed}(c). Similarly, $\beta_\tau$
and $\beta_{\tau'}$ cannot intersect at an intermediate vertex of both paths.

Now suppose that $\gamma_\tau\cap\beta_{\tau'}$ contains two vertices $w,w'$.
Then $w,w',b(\tau),t(\tau')$ are connected by the four edges $e:=(b(\tau),w)$,
$e':=(b(\tau),w')$, $f:=(w,t(\tau'))$, and $f':=(w',t(\tau'))$.
By~\refeq{4edges}, these edges are spanned by a tile $\tilde\tau$. We assert
that $\tilde\tau$ is a white tile in $T$ (whence $w,w'$ are connected by a
horizontal edge in $\Gamma$).

Indeed, if this is not so, then the edges $e,e'$ are not consecutive in
$E_T(b(\tau))$ and $f,f'$ are not consecutive in $E_T(t(\tau'))$. Note that at
least one of the edges $e,e'$, say, $e$, is white (for otherwise
$\tilde\tau=\tau$, implying that the black tiles $\tau,\tau'$ have a common
terminal vertex, namely, $t(\tau)=t(\tau')$). Since $f,f$ are not consecutive,
there is a (unique) white tile $\hat\tau\in F_T(t(\tau'))$ lying between $f$
and $f'$ and containing $f$ but not $f'$. Since $e,f'$ are parallel, the white
edge $e$ lies between $f$ and the edge of $\hat\tau$ connecting $b(\hat\tau)$
and $w$ (these three edges are incident to $w$). This leads to a contradiction
with Corollary~\ref{cor:ord-mixed}.

Thus, $\tilde\tau$ is a white tile in $T$, and now the lemma easily follows.
  \end{proof}


\subsection{Contractions on $\Gamma$}
\label{ssec:contr_Gamma}

Our final step in preparation to proving Proposition~\ref{pr:LZ-Gamma} is to
explain how the graph $\Gamma$ changes under the $n$- and 1-contraction
operations. We use terminology and notation from Subsection~\ref{ssec:s-c-e}. A
majority of our analysis is devoted to the $n$-contraction operation that
reduces a g-tiling $T$ on $Z=Z_n$ to the g-tiling $T':=T/n$ on $Z_{n-1}$.

It is more convenient to consider the reversed $n$-strip
$Q=(e_0,\tau_1,e_1,\ldots, \tau_r,e_r)$, i.e. $e_0$ is the $n$-edge
$z^r_nz^r_{n-1}$ on $rbd(Z)$ and $e_r$ is the $n$-edge $z^\ell_{n-1}z^\ell_n$
on $\ell bd(Z)$. Let $R_Q=(v_0,a_1,v_1,\ldots,a_r,v_r)$ be the right boundary,
and $L_Q=(v'_0,a'_1,v'_1,\ldots,a'_r,v'_r)$ the left boundary of $Q$, i.e.
$v'_0=z_0$, ~$v_r=z_n$, and $e_q=(v'_q,v_q)$ for each $q$.

Since $n$ is the maximal label, if an $n$-edge $e$ belongs to a tile $\tau$,
then $e$ is either $br(\tau)$ or $\ell t(\tau)$. This implies (in view of
$e_0=br(\tau_1)$) that for consecutive tiles $\tau_q,\tau_{q+1}$ in $Q$, one
holds: if both tiles are white then $e_q=\ell t(\tau_q)=br(\tau_{q+1})$; if
$\tau_q$ is black then $e_q=br(\tau_q)=br(\tau_{q+1})$; and if $\tau_{q+1}$ is
black then $e_q=\ell t(\tau_q)=\ell t(\tau_{q+1})$. So the height of
$\tau_{q+1}$ is greater by one than the height of $\tau_q$ if both tiles are
white, and the heights are equal otherwise. In particular, the tile height is
weakly increasing along $Q$ and grows from 1 to $n-1$. For $h=1,\ldots,n-1$,
let $Q^h=(e_{d(h)-1},\tau_{d(h)},e_{d(h)}, \ldots,\tau_{f(h)},e_{f(h)})$ be the
maximal part of $Q$ in which all tiles are of height $h$; we call it $h$-th
\emph{fragment} of $Q$.

Recall that from the viewpoint of $D_T$, the $n$-contraction operation acts as
follows. The interior of $\sigma(Q)$ is removed from $D_T$, forming two closed
simply connected regions $D^r,D^\ell$, where $D^r$ (the ``right'' region)
contains $\sigma(R_Q)$ and the rest of $\sigma(rbd(Z))$, and $D^\ell$ (the
``left'' region) contains $\sigma(L_Q)$ and the rest of $\sigma(\ell bd(Z))$.
The region $D^r$ is shifted by $-\eps_n$ (where $\eps_n$ is $n$-th unit base
vector in $\Rset^{[n]}$) and the path $\sigma(R_Q)-\eps_n$ merges with
$\sigma(L_Q)$. From the viewpoint of $Z$, the tiles occurring in $Q$ vanish and
the tiles $\tau\in T$ with $\sigma(\tau)\subset D^r$ are shifted by $-\xi_n$;
in other words, each vertex $X$ (regarded as a set) containing the element $n$
turns into the vertex $X-n$ of the resulting tiling $T'$ on $Z_{n-1}$. (Recall
that $X$ contains $n$ if and only if $\sigma(X)$ is in $D^r$.) Each vertex
$v_q$ of $R_Q$ merges with the vertex $v'_q$ of $L_Q$. The path $L_Q$ no longer
contains terminal vertices (so all edges in it are now fully white) and becomes
an $n$-legal path for $T'$. Any zigzag subpath in this path goes from left to
right, and
   \begin{numitem1}
for $h=1,\ldots,n-2$, ~$v'_{f(h)=d(h+1)-1}$ is the critical vertex of
$L_Q$ in level $h$ for $T'$.
   \label{eq:critLQ}
   \end{numitem1}

Consider $h$-th fragment $Q^h=(e_{d-1},\tau_d,e_d,\ldots,\tau_f,e_f)$ (letting
$d:=d(h)$ and $f:=f(h)$). It produces $(f-d)/2+1$ horizontal and four ascending
edges in $\Gamma=\Gamma_T$ (note that $f-d$ is even). More precisely, each tile
$\tau_q$ with $q-d$ even is white and its diagonal makes the horizontal edge
$g_q:=(v'_q,v_{q-1})$ in $\Gamma$. Also $\tau_d$ contributes the ascending
edges $e_{d-1}=br(\tau_d)=(v'_{d-1},v_{d-1})$ and
$a'_d=b\ell(\tau_q)=(v'_{d-1},v'_d)$. In its turn, $\tau_f$ contributes the
ascending edges $e_f=\ell t(\tau_f)=(v'_f,v_f)$ and
$a_f=rt(\tau_f)=(v_{f-1},v_f)$ to $\Gamma$. Let $\Escr^h$ be the set of edges
in $\Gamma$ produced by $Q^h$. Then $\Escr^h\cap \Escr^{h+1}=\{e_f\}$
~($=\{e_{d(h+1)-1}\}$).

Under the $n$-contraction operation, $\Gamma$ is transformed into the graph
$\Gamma':=\Gamma_{T'}$. The transformation concerns only the sets $\Escr^h$ and
is obvious: all horizontal edges of $\Escr^h$ disappear (as all tiles in $Q^h$
vanish) and the four ascending edges are replaced by (the edges of) the subpath
$L_{Q^h}$ of $L_Q$ from $v'_{d-1}$ to $v'_f$, in which all edges connect levels
$h-1$ and $h$ (using indices as above). In particular, when $d=f$ (i.e. when
$Q^h$ has only one (white) tile), the five edges of $\Escr^h$ shrink into one
edge $a'_d=(v'_{d-1},v'_d)$.

When $\Delta:=f-d>0$, the transformation needs to be examined more carefully.
The $\Delta/2+1$ white tiles and the $\Delta/2$ black tiles in $Q^h$ alternate.
The horizontal edges $g_d,g_{d+2},\ldots,g_f$ in $\Escr^h$ belong to the same
nontrivial hammock in $\Gamma^h$, say, $\Gamma^h(p)$. More precisely, for
$q=d+1,d+3,\ldots,f-1$, the edges $g_{q-1}=(v'_{q-1},v_{q-2})$ and
$g_{q+1}=(v'_{q+1},v_q)$ are contained in the cycle $\zeta_{\tau_q}$ related to
the black tile $\tau_q$. Also $\tau_{q-1}$ is the \emph{leftmost} white tile in
$F_T(t(\tau_q))$ and $\tau_{q+1}$ is the \emph{rightmost} white tile in
$F_T(b(\tau_q))$. Therefore, $g_{q-1}$ is the \emph{first} edge in the lower
path $\beta_{\tau_q}$ and $g_{q+1}$ is the \emph{last} edge in the upper path
$\gamma_{\tau_q}$ in $\zeta_{\tau_q}$. For a similar reason, unless $q=f-1$,
~$g_{q+1}$ is simultaneously the first edge in $\beta_{\tau_{q+2}}$.

This and Lemma~\ref{lm:intersect} imply that if we take the union of cycles
$\zeta_{\tau_q}$ for $q=d+1,d+3,\ldots,f-1$ and delete from it the horizontal
edges in $\Escr^h$, then we obtain two directed horizontal paths in level $h$
of $\Gamma$: path $\Pscr^h_1$ from $v'_d$ to $v'_f$ which passes the vertices
$v'_d,v'_{d+2},\ldots,v'_f$ in this order, and path $\Pscr^h_2$ from $v_{d-1}$
to $v_{f-1}$ which passes the vertices $v_{d-1},v_{d+1},\ldots,v_{f-1}$ in this
order (these paths may contain other vertices as well). When $f=d$, each of
these paths consists of a single vertex.

In the new graph $\Gamma'$, the path $\Pscr^h_1$ preserves and continues to be
a horizontal path in level $h$, whereas $\Pscr^h_2$ is shifted by $-\xi_n$ and
turns into the directed horizontal path, denoted by $\Pscr'^h_2$, that passes
the vertices $v'_{d-1},v'_{d+1},\ldots,v'_{f-1}$ in level $h-1$. These paths
are connected in $\Gamma'$ by the (zigzag) path
$\Zscr^h:=(v'_{d-1},v'_d,v'_{d+1},\ldots,v'_f)$ whose edges connect levels
$h-1$ and $h$. (Under the transformation, the hammock $\Gamma^h(p)$ becomes
split into two hammocks, one (in level $h$) containing the path $\Pscr^h_1$,
and the other (in level $h-1$) containing the image $\Pscr'^h_2$ of
$\Pscr^h_2$.) In view of~\refeq{critLQ},
  \begin{numitem1}
the last vertex $v'_f$ of $\Pscr^h_1$ (which is simultaneously the
first vertex of $\Pscr'^{(n+1)}_2$ when $h<n-1$) and the first vertex
$v'_{d-1}$ of $\Pscr'^h_2$ (which is simultaneously the last vertex of
$\Pscr^{n-1}_1$ when $h>1$) are critical for $T'$ in levels $h$ and
$h-1$, respectively.
   \label{eq:new_critic_n}
   \end{numitem1}

The transformation of $\sigma(\Gamma)$ into $\sigma(\Gamma')$ within the
fragment $Q^h$ with $d<f$ is illustrated in the picture; here for brevity we
write $\rho_q$ instead of $\rho_{\tau_q}$, and omit $\sigma$ in notation for
vertices, edges and paths.

 \begin{center}
  \unitlength=1mm
  \begin{picture}(145,38)
     \put(0,0){\begin{picture}(70,38)
  \put(10,20){\circle*{1.8}}
  \put(60,20){\circle*{1.8}}
  \qbezier(10,20.4)(35,-4.6)(60,20.4)
  \qbezier(10,19.6)(35,-5.4)(60,19.6)
  \qbezier(10,20.4)(35,45.4)(60,20.4)
  \qbezier(10,19.6)(35,44.6)(60,19.6)
  \put(53,26){\circle*{1.2}}
  \put(43,31){\circle*{1.2}}
  \put(17,14){\circle*{1.2}}
  \put(27,9){\circle*{1.2}}
  \put(45.5,21){\circle*{1.2}}
  \put(35.5,26){\circle*{1.2}}
  \put(24.5,19){\circle*{1.2}}
  \put(34.5,14){\circle*{1.2}}
  \put(35,20){\circle*{1}}
  \put(38,22){\circle*{1}}
  \put(32,18){\circle*{1}}
  \put(53,26.4){\line(-3,-2){10}}
  \put(43,31.4){\line(-3,-2){10}}
  \put(17,14.4){\line(3,2){10}}
  \put(27,9.4){\line(3,2){10}}
  \put(45.5,21.4){\line(-2,1){10}}
  \put(24.5,19.4){\line(2,-1){10}}
  \put(53,25.6){\line(-3,-2){10}}
  \put(43,30.6){\line(-3,-2){10}}
  \put(17,13.6){\line(3,2){10}}
  \put(27,8.6){\line(3,2){10}}
  \put(45.5,20.6){\line(-2,1){10}}
  \put(24.5,18.6){\line(2,-1){10}}
  \put(45,15){$\Pscr^h_2$}
  \put(25,22){\vector(3,2){7}}
  \put(22,24){$\Pscr^h_1$}
  \put(39,14){\vector(3,2){7}}
  \put(16,20){$v'_{d+2}$}
  \put(53,26){\vector(0,1){7.7}}
  \put(43,31){\vector(0,1){5.7}}
  \put(53,34){\circle*{1.2}}
  \put(43,37){\circle*{1.2}}
  \put(17,6){\vector(0,1){7.7}}
  \put(27,3){\vector(0,1){5.7}}
  \put(17,6){\circle*{1.2}}
  \put(27,3){\circle*{1.2}}
  \put(42,25.5){$\rho_{f-1}$}
  \put(22,13){$\rho_{d+1}$}
  \put(1,21){$X$=}
  \put(1,18){$u_{p-1}$}
  \put(60,21){$Y$=}
  \put(61,18){$u_p$}
  \put(54,27){$v_{f-1}$}
  \put(44,32){$v'_{f}$}
  \put(13,11){$v'_{d}$}
  \put(19.5,6.5){$v_{d-1}$}
  \put(34,11){$v_{d+1}$}
  \put(54,33){$v_{f}$}
  \put(35,36){$v'_{f+1}$}
  \put(9,6){$v'_{d-1}$}
  \put(28,3){$v_{d-2}$}
  \put(62,10){\vector(1,0){15}}
    \end{picture}}
     \put(75,0){\begin{picture}(70,38)

  \put(10,20){\circle*{1.8}}
  \qbezier(10,20.4)(12,17.4)(17,14.4)
  \qbezier(10,19.6)(12,16.6)(17,13.6)

  \qbezier(10,20.4)(25,36.4)(43,31.4)
  \qbezier(10,19.6)(25,35.6)(43,30.6)
  \put(17,14){\circle*{1.2}}
  \put(35.5,26){\circle*{1.2}}
  \put(24.5,19){\circle*{1.2}}
  \put(43,31){\circle*{1.2}}
  \put(43,31.4){\line(-3,-2){10}}
  \put(17,14.4){\line(3,2){10}}
  \put(43,30.6){\line(-3,-2){10}}
  \put(17,13.6){\line(3,2){10}}
  \put(25,22){\vector(3,2){7}}
  \put(22,24){$\Pscr^h_1$}
  \put(30,18.5){\circle*{1}}
  \put(33,20.5){\circle*{1}}
  \put(27,16.5){\circle*{1}}
  \put(43,31){\vector(0,1){5.7}}
  \put(43,37){\circle*{1.2}}
  \put(17,6){\vector(0,1){7.7}}
  \put(17,6){\circle*{1.2}}
  \put(1,21){$X$=}
  \put(1,18){$u_{p-1}$}
  \put(44,32){$v'_{f}$}
  \put(13,11){$v'_{d}$}
  \put(35,36){$v'_{f+1}$}
  \put(9,6){$v'_{d-1}$}
  \put(50,17){\circle*{1.8}}
  \put(43,23.4){\line(-3,-2){10}}
  \qbezier(17,6.4)(35,1.4)(50,17.4)
 \qbezier(17,5.6)(35,0.6)(50,16.6)
  \qbezier(43,23.4)(48,20.4)(50,17.4)
 \qbezier(43,22.6)(48,19.6)(50,16.6)
  \put(43,23){\circle*{1.2}}
  \put(35.5,18){\circle*{1.2}}
  \put(24.5,11){\circle*{1.2}}
  \put(17,6.4){\line(3,2){10}}
  \put(43,22.6){\line(-3,-2){10}}
  \put(17,5.6){\line(3,2){10}}
  \put(35,12){$\Pscr'^h_2$}
  \put(29,11){\vector(3,2){7}}
  \put(43,23){\vector(0,1){7.7}}
  \put(43,31){\circle*{1.2}}
  \put(17,0){\vector(0,1){5.7}}
  \put(17,0){\circle*{1.2}}
  \put(51,14){$Y$--$\{n\}$}
  \put(44,24){$v'_{f-1}$}
  \put(18,0){$v'_{d-2}$}
  \put(24.5,8){$v'_{d+1}$}
  \put(24.5,11){\vector(0,1){7.7}}
  \put(35.5,18){\vector(0,1){7.7}}
  \put(24.5,11){\vector(-3,1){7}}
  \put(43,23){\vector(-3,1){7}}
  \put(35.5,18){\vector(-3,1){4}}
    \end{picture}}
\end{picture}
   \end{center}

Notice that the paths $\Pscr^{h-1}_1$ and $\Pscr^h_1$ are connected in $\Gamma$
by one ascending edge (namely, $a'_{d(h)}$) going from the end
$v'_{f(h-1)=d(h)-1}$ of the former path to the beginning $v'_{d(h)}$ of the
latter one; we call it the \emph{bridge} between these paths and denote by
$b'_h$. Similarly, there is an ascending edge (namely, $a_{d(h)-1}$) going from
the end $v_{d(h)-2}$ of $\Pscr^{h-1}_2$ to the beginning $v_{d(h)-1}$ of
$\Pscr^h_2$, the bridge between these paths, denoted by $b_h$. Under the
transformation, $b_h$ is shifted and becomes the last edge
$(v'_{d(h)-2},v'_{d(h)})$ of the (zigzag) path $\Zscr^{h-1}$ and the beginning
of $\Pscr'^h_2$ merges with the end of $\Pscr^{h-1}_1$.
(Cf.~\refeq{new_critic_n}.)

Thus, concatenating the paths $\Pscr_1^1,\ldots,\Pscr_1^{n-1}$ and the bridges
$b'_1,b'_2,\ldots,b'_{n-1}$ (where $b'_1:=a'_1$), we obtain a directed path
from $z_0$ to $z^\ell_{n-1}$ in both $\Gamma$ and $\Gamma'$, denoted by
$\Pscr_1$. Accordingly, we construct directed paths $\Pscr_2$ (in $\Gamma$) and
$\Pscr'_2$ (in $\Gamma'$) by concatenating, in a due way, the paths $\Pscr^h_2$
with the bridges $b_h$, and the paths $\Pscr'^h_2$ with the shifts of these
bridges, respectively.

Let $\Gamma_1$ and $\Gamma_2$ be the subgraphs of $\Gamma$ whose images
by $\sigma$ lie in the regions $D^\ell$ and $D^r$ of $D_T$,
respectively. Let $\Gamma'_2$ be the subgraph of $\Gamma'$ whose image
by $\sigma$ lies in $D^r-\eps_n$. We observe that:
   \begin{numitem1}
 \begin{itemize}
\item[(i)] the common vertices of $\Pscr_1$ and $\Pscr'_2$ are exactly the
critical vertices (indicated in~\refeq{critLQ}) of the path $L_Q$ in $G_{T'}$;
\item[(ii)] if a directed path $P$ in $\Gamma'$ goes from a vertex $x$ of
$\Gamma_1$ to a vertex $y$ of $\Gamma'_2$, then $P$ contains a critical vertex
$v$ of $L_Q$; moreover, there exist a directed path $P'$ from $x$ to $v$ in
$\Gamma_1$ and a directed path $P''$ from $v$ to $y$ in $\Gamma'_2$.
  \end{itemize}
  \label{eq:separ_paths}
  \end{numitem1}
Here (ii) follows from the facts that both $\Pscr_1,\Pscr'_2$ are directed
paths and that all edges not in $\Pscr_1\cap \Pscr'_2$ that connect $\Gamma_1$
and $\Gamma'_2$ go from vertices of $\Pscr'_2$ to vertices of $\Pscr_1$ (as
they are ascending edges occurring in zigzag paths $\Zscr^h$). \medskip

The 1-contraction operation acts symmetrically, in a sense; below we mainly
describe the moments where there are differences from the $n$-contraction case.

Let $Q=(e_0,\tau_1,e_1,\ldots,\tau_r,e_r)$ be the 1-strip in $T$,
~$R_Q=(v_0,a_1,v_1,\ldots,a_r,v_r)$ the right boundary of $Q$, and
$L_Q=(v'_0,a'_1,v'_1,\ldots,a'_r,v'_r)$ the left boundary of $Q$. So
$e_0=(v_0,v'_0)=z_0 z^\ell_1$ and $e_r=(v_r,v'_r)=z^r_{n-1}z_n$.

Since label 1 is minimal, if a 1-edge $e$ belongs to a tile $\tau\in T$, then
either $e=b\ell(\tau)$ or $e=rt(\tau)$. For consecutive tiles
$\tau_q,\tau_{q+1}$ in $Q$: the height of $\tau_{q+1}$ is greater by one than
the height of $\tau_q$ if both tiles are white, and the heights are equal if
one of these tiles is black. Like the previous case, for $h=1,\ldots,n$, define
$h$-th fragment of $Q$ to be the maximal part
$Q^h=(e_{d(h)-1},\tau_{d(h)},e_{d(h)},\ldots,\tau_{f(h)},e_{f(h)})$ of $Q$ in
which all tiles are of height $h$.

The fragment $Q^h$ produces $(f-d)/2$ horizontal and four ascending
edges in $\Gamma$, where $d:=d(h)$ and $f:=f(h)$. Each tile $\tau_q$
with $q-d$ even is white and produces the horizontal edge
$g_q:=(v'_{q-1},v_q)$ in $\Gamma^h$. Also $\tau_d$ contributes the
ascending edges $e_{d-1}=b\ell(\tau_d)=(v_{d-1},v'_{d-1})$ and
$a_d=br(\tau_q)=(v_{d-1},v_d)$, and $\tau_f$ contributes the ascending
edges $e_f=rt(\tau_f)=(v_f,v'_f)$ and $a'_f=\ell
t(\tau_f)=(v'_{f-1},v'_f)$. Let $\Escr^h$ be the set of edges in
$\Gamma$ produced by $Q^h$.

Under the 1-contraction operation, $\Gamma$ is transformed into
$\tilde\Gamma':=\Gamma_{T/1}$ as follows: for each $h$, the horizontal edges of
$\Escr^h$ disappear and the four ascending edges are replaced by the subpath
$R^h_Q$ of $R_Q$ from $v_{d-1}$ to $v_f$ (using indices as above) in which all
edges connect levels $h-1$ and $h$. When $d=f$, ~$\Escr^h$ shrinks into one
edge $a_d=(v_{d-1},v_d)$.

When $\Delta:=f-d>0$, the horizontal edges $g_d,g_{d+2},\ldots,g_f$ in
$\Escr^h$ belong to the same nontrivial hammock, $\Gamma^h(p)$ say. On the
other hand, for $q=d+1,d+3,\ldots,f-1$, the edges $g_{q-1}$ and $g_{q+1}$
belong to the cycle $\zeta_{\tau_q}$ related to the black tile $\tau_q$. Also
$\tau_{q-1}$ is the \emph{rightmost} white tile in $F_T(t(\tau_q))$ and
$\tau_{q+1}$ is the \emph{leftmost} white tile in $F_T(b(\tau_q))$. Therefore,
$g_{q-1}$ is the \emph{last} edge in the lower path $\beta_{\tau_q}$ and
$g_{q+1}$ is the \emph{first} edge in the upper path $\gamma_{\tau_q}$ in
$\zeta_{\tau_q}$. Unless $q=f-1$, ~$g_{q+1}$ is simultaneously the last edge in
$\beta_{\tau_{q+2}}$. In view of Lemma~\ref{lm:intersect}, this implies that if
we take the union of cycles $\zeta_q$ and then delete from it the horizontal
edges of $\Escr^h$, then we obtain two horizontal paths in $\Gamma^h(p)$: path
$\tilde\Pscr^h_1$ from $v'_{f-1}$ to $v'_{d-1}$ which passes the vertices
$v'_{f-1},v'_{f-3},\ldots,v'_{d-1}$ in this order, and path $\tilde\Pscr^h_2$
from $v_f$ to $v_d$ which passes the vertices $v_f,v_{f-2},\ldots,v_d$ in this
order. (So, both paths are directed by decreasing the vertex indices, in
contrast to the direction of the corresponding paths in the $n$-contraction
case.)

For our further purposes, it will be sufficient to examine the transformation
of $\Gamma$ only within its part related to a single fragment $Q^h$. In the new
graph $\tilde\Gamma'$, the path $\tilde\Pscr^h_2$ preserves and continues to be
a horizontal path in level $h$, whereas $\tilde\Pscr^h_1$ is shifted by
--$\xi_1$ and turns into a horizontal path in level $h-1$, denoted by
$\tilde\Pscr'^h_1$, which passes the vertices $v_{f-1},v_{f-3},\ldots,v_{d-1}$.
These paths are connected in $\tilde\Gamma'$ by the zigzag path
$\tilde\Zscr^h=(v_f,v_{f-1},\ldots, v_d,v_{d-1})$ whose edges are ascending and
connect levels $h-1$ and $h$. The picture illustrates the transformation
$\sigma(\Gamma)\mapsto\sigma(\tilde\Gamma')$ within the fragment $Q^h$.

 \begin{center}
  \unitlength=1mm
  \begin{picture}(145,38)
     \put(0,0){\begin{picture}(70,38)
  \put(10,20){\circle*{1.8}}
  \put(60,20){\circle*{1.8}}
  \qbezier(10,20.4)(35,-4.6)(60,20.4)
  \qbezier(10,19.6)(35,-5.4)(60,19.6)
  \qbezier(10,20.4)(35,45.4)(60,20.4)
  \qbezier(10,19.6)(35,44.6)(60,19.6)
  \put(17,26){\circle*{1.2}}
  \put(27,31){\circle*{1.2}}
  \put(53,14){\circle*{1.2}}
  \put(43,9){\circle*{1.2}}
  \put(24.5,21){\circle*{1.2}}
  \put(34.5,26){\circle*{1.2}}
  \put(45.5,19){\circle*{1.2}}
  \put(35.5,14){\circle*{1.2}}
  \put(35,20){\circle*{1}}
  \put(32,22){\circle*{1}}
  \put(38,18){\circle*{1}}
  \put(17,26.4){\line(3,-2){10}}
  \put(27,31.4){\line(3,-2){10}}
  \put(53,14.4){\line(-3,2){10}}
  \put(43,9.4){\line(-3,2){10}}
  \put(24.5,21.4){\line(2,1){10}}
  \put(45.5,19.4){\line(-2,-1){10}}
  \put(17,25.6){\line(3,-2){10}}
  \put(27,30.6){\line(3,-2){10}}
  \put(53,13.6){\line(-3,2){10}}
  \put(43,8.6){\line(-3,2){10}}
  \put(24.5,20.6){\line(2,1){10}}
  \put(45.5,18.6){\line(-2,-1){10}}
  \put(20,15){$\tilde\Pscr^h_1$}
  \put(25,18){\vector(3,-2){7}}
  \put(46,22){$\tilde\Pscr^h_2$}
  \put(39,26){\vector(3,-2){7}}
  \put(17,26){\vector(0,1){7.7}}
  \put(27,31){\vector(0,1){5.7}}
  \put(17,34){\circle*{1.2}}
  \put(27,37){\circle*{1.2}}
  \put(53,6){\vector(0,1){7.7}}
  \put(43,3){\vector(0,1){5.7}}
  \put(53,6){\circle*{1.2}}
  \put(43,3){\circle*{1.2}}
  \put(22,25.5){$\rho_{f-1}$}
  \put(41,13){$\rho_{d+1}$}
  \put(1,21){$X$=}
  \put(1,18){$u_{p-1}$}
  \put(60,21){$Y$=}
  \put(61,18){$u_p$}
  \put(9,27){$v'_{f-1}$}
  \put(23,32){$v_{f}$}
  \put(54,12){$v_{d}$}
  \put(44,6){$v'_{d-1}$}
  \put(12,33){$v'_{f}$}
  \put(28,37){$v_{f+1}$}
  \put(54,5){$v_{d-1}$}
  \put(35,2){$v'_{d-2}$}
  \put(62,10){\vector(1,0){15}}
    \end{picture}}
     \put(75,0){\begin{picture}(45,38)

  \put(60,20){\circle*{1.8}}
  \qbezier(60,20.4)(58,17.4)(53,14.4)
  \qbezier(60,19.6)(58,16.6)(53,13.6)

  \qbezier(60,20.4)(45,36.4)(27,31.4)
  \qbezier(60,19.6)(45,35.6)(27,30.6)
  \put(53,14){\circle*{1.2}}
  \put(34.5,26){\circle*{1.2}}
  \put(45.5,19){\circle*{1.2}}
  \put(27,31){\circle*{1.2}}
  \put(27,31.4){\line(3,-2){10}}
  \put(53,14.4){\line(-3,2){10}}
  \put(27,30.6){\line(3,-2){10}}
  \put(53,13.6){\line(-3,2){10}}
  \put(38,26){\vector(3,-2){7}}
  \put(45,22){$\tilde\Pscr^h_2$}
  \put(40,18.5){\circle*{1}}
  \put(37,20.5){\circle*{1}}
  \put(43,16.5){\circle*{1}}
  \put(27,31){\vector(0,1){5.7}}
  \put(27,37){\circle*{1.2}}
  \put(53,6){\vector(0,1){7.7}}
  \put(53,6){\circle*{1.2}}
  \put(8,13){$X$--$\{1\}$}
  \put(23,32){$v_{f}$}
  \put(54,11.5){$v_{d}$}
  \put(28,37){$v_{f+1}$}
  \put(54,5){$v_{d-1}$}
  \put(20,17){\circle*{1.8}}
  \put(27,23.4){\line(3,-2){10}}
  \qbezier(53,6.4)(35,1.4)(20,17.4)
 \qbezier(53,5.6)(35,0.6)(20,16.6)
  \qbezier(27,23.4)(22,20.4)(20,17.4)
 \qbezier(27,22.6)(22,19.6)(20,16.6)
  \put(27,23){\circle*{1.2}}
  \put(34.5,18){\circle*{1.2}}
  \put(45.5,11){\circle*{1.2}}
  \put(53,6.4){\line(-3,2){10}}
  \put(27,22.6){\line(3,-2){10}}
  \put(53,5.6){\line(-3,2){10}}
  \put(29.5,12){$\tilde\Pscr'^h_1$}
  \put(33.5,16.5){\vector(3,-2){7}}
  \put(27,23){\vector(0,1){7.7}}
  \put(27,31){\circle*{1.2}}
  \put(53,0){\vector(0,1){5.7}}
  \put(53,0){\circle*{1.2}}
  \put(60,21){$Y$=}
  \put(61,18){$u_p$}
  \put(19,24){$v_{f-1}$}
  \put(45,-1){$v'_{d-2}$}
  \put(41,8){$v_{d+1}$}
  \put(45.5,11){\vector(0,1){7.7}}
  \put(34.5,18){\vector(0,1){7.7}}
  \put(45.5,11){\vector(3,1){7}}
  \put(27,23){\vector(3,1){7}}
  \put(34.5,18){\vector(3,1){4}}
    \end{picture}}
\end{picture}
   \end{center}

One can see that the first vertex $v_f$ of $\tilde\Pscr^h_2$ and the
last vertex $v_{d-1}$ of $\tilde\Pscr'^{h}_1$ are critical vertices for
$\tilde T':=T/1$ in levels $h$ and $h-1$, respectively (and they are
the only critical vertices for $\tilde T'$ occurring in these paths).
Under the transformation, the hammock $\Gamma^h(p)$ (concerning $Q^h$)
becomes split into two hammocks in $\tilde\Gamma'$: hammock $H_1$ in
level $h-1$ that contains the path $\tilde\Pscr'^h_1$, and hammock
$H_2$ in level $h$ that contains $\tilde\Pscr^h_2$. Comparing the part
of $\Gamma^h(p)$ between $\tilde\Pscr^h_1$ and $\tilde\Pscr^h_2$ with
the zigzag path $\Zscr^h$, one can conclude that
  \begin{numitem1}
if a vertex $x$ of $H_1$ and a vertex $y$ of $H_2$ are connected in
$\tilde\Gamma'$ by a directed path from $x$ to $y$, then there exists a
directed path from $x+\xi_1$ to $y$ in $\Gamma$.
  \label{eq:H1H2}
  \end{numitem1}


\subsection{Proof of ``$\prec^\ast$ implies $\prec_\Gamma$''}
\label{ssec:proof_impl}

Based on the above explanations, we are now ready to prove
Proposition~\ref{pr:LZ-Gamma}. We use induction on the number of edge labels of
a g-tiling and apply the $n$- and 1-contraction operations.

Let $A,B\in\mathfrak{S}_T$, $A\ne B$ and $A\prec^\ast B$. We have to
show the existence of a directed path from the vertex $A$ to the vertex
$B$ in the graph $\Gamma=\Gamma_T$.

First we consider the case $|A|<|B|$. Let $A':=A-\{n\}$ and $B':=B-\{n\}$. Then
$A',B'\in\mathfrak{S}_{T'}$, where $T'$ is the $n$-contraction $T/n$ of $T$.
Also $|A'|\le|B'|$. Therefore, $A\prec^\ast B$ implies $A'\prec^\ast B'$, and
by induction the graph $\Gamma':=\Gamma_{T'}$ contains a directed path $P$ from
$A'$ to $B'$. Note that $n\in A$ would imply $n\in B$. So either $n\not\in
A,B$, or $n\in A,B$, or $n\not\in A$ and $n\in B$. Consider these cases,
keeping notation from part III.
\smallskip

\emph{Case 1}: $n\not\in A,B$. Then $A'=A$, $B'=B$, and both $A',B'$
belong to the graph $\Gamma_1$. Since the path $\Pscr_1$ in the
boundary of $\Gamma_1$ is directed, $P$ as above can be chosen so as to
be entirely contained in $\Gamma_1$. (For if $P$ meets $\Pscr_1$, take
the first and last vertices of $P$ that occur in $\Pscr_1$, say, $x,y$
(respectively), and replace in $P$ its subpath from $x$ to $y$ by the
subpath of $\Pscr_1$ connecting $x$ and $y$, which must be directed
from $x$ to $y$ since $\Gamma'$ is acyclic.) Then $P$ is a directed
path from $A$ to $B$ in $\Gamma$, as required.
\smallskip

\emph{Case 2}: $n\in A,B$. Then both $A',B'$ belong to the graph
$\Gamma'_2$. Like the previous case, one may assume that $P$ is
entirely contained in $\Gamma'_2$. Since $\Gamma'_2+\xi_n$ is a
subgraph of $\Gamma$, ~$P+\xi_n$ is the desired path from $A$ to $B$ in
$\Gamma$.
\smallskip

\emph{Case 3}: $n\not\in A$ and $n\in B$. Then $A'=A$ is in $\Gamma_1$
and $B'$ is in $\Gamma'_2$. By~\refeq{separ_paths}, there exist a
directed path $P'$ from $A$ to $v$ in $\Gamma_1$ and a directed path
$P''$ from $v$ to $B'$ in $\Gamma'_2$, where $v$ is a critical vertex
$v'_{f(h)}$ in $\Pscr_1\cap\Pscr'_2$. Concatenating $P'$, the (fully
white) ascending edge $e_{f(h)}=(v'_{f(h)},v_{f(h)})$, and the path
$P''+\xi_n$ (going from $v_{f(h)}$ to $B'n=B$), we obtain a directed
path from $A$ to $B$ in $\Gamma$, as required. \medskip

Now consider the case $|A|=|B|=:h$. Note that the reduction by label $n$ as
above does not work when the element $n$ is contained in $B$ but not in $A$
(since in this case $|B-\{n\}|$ becomes less than $|A-\{n\}|$ and we cannot
apply induction). Nevertheless, we can use the 1-contraction operation (which,
in its turn, would not fit to treat the case $|A|<|B|$, since the concatenation
of the paths $\tilde\Pscr^h_2$ and the corresponding bridges is not a directed
path). Let $A':=A-\{1\}$ and $B':=B-\{1\}$; then $|A'|\le |B'|$ (since $1\in B$
would imply $1\in A$, in view of $A\lessdot B$). The vertices $A,B$ belong to
the horizontal subgraph $\Gamma^h$ of $\Gamma$; suppose they occur in $p$-th
and $p'$-th hammocks of $\Gamma^h$, respectively. The existence of a directed
path from $A$ to $B$ is not seen immediately only when $p=p'$ and, moreover,
when the 1-strip for $T$ ``splits'' the hammock $\Gamma^h(p)$. (Note that
$p>p'$ is impossible; otherwise there would exist a directed path from $B$ to
$A$, implying $B\lessdot A$.) Let $H_1,H_2$ be the hammocks in $\tilde\Gamma'$
created from $\Gamma^h(p)$ by the 1-contraction operation as described above.
If both $A',B'$ belong to the same $H_i$, then the existence (by induction) of
a directed path from $A'$ to $B'$ in $\tilde\Gamma'$ (and therefore, in $H_i$)
immediately yields the result. Let $A'\in H_1$ and $B'\in H_2$ (the case $A'\in
H_2$ and $B'\in H_1$ is impossible). Then the existence of a directed path from
$A$ to $B$ in $\Gamma$ follows from~\refeq{H1H2}.
\smallskip

Thus, $A\prec_\Gamma B$ is valid in all cases, as required. This completes the
proof of Proposition~\ref{pr:LZ-Gamma}, yielding Theorems~\ref{tm:2posets}
and~\ref{tm:prec-lat} and completing the proof of Theorem~B.

\section{Additional results and a generalization}  \label{sec:concl}

In this concluding section we gather in an additional harvest from results and
methods described in previous sections; in particular, we generalize Theorem~A
to the case of two permutations. Sometimes the description below will be given
in a sketched form and we leave the details to the reader.

We start with associating to a permutation $\omega$ on $[n]$ the directed path
$P_\omega$ on the zonogon $Z_n$ in which the vertices are the points
$v^i_\omega:=\sum(\xi_j\colon j\in\omega^{-1}[i])$, $i=0,\ldots,n$, and the
edges are the directed line segments $e^i_\omega$ from $v^{i-1}_\omega$ to
$v^i_\omega$. So $P_\omega$ begins at $v^0_\omega=z_0$, ends at
$v^n_\omega=z_n$, and each edge $e^i_\omega$ is a parallel translation of the
vector $\xi_{\omega^{-1}(i)}$. Also a vertex $v^i_\omega$ represents $i$-th
ideal $I_\omega^i=\omega^{-1}[i]$ for $\omega$ (cf. Section~\ref{sec:omega}).
Note that if the spectrum of a g-tiling $T$ on $Z_n$ contains all sets
$I_\omega^i$, then the graph $G_T$ contains the path $P_\omega$, in view of
Proposition~\ref{pr:4edges}(i). When $\omega$ is the longest permutation
$\omega_0$ on $[n]$, ~$P_\omega$ becomes the right boundary $rbd(Z_n)$ of
$Z_n$. When $\omega$ is the identical permutation, denoted as $id$, ~$P_\omega$
becomes the left boundary $\ell bd(Z_n)$.

Consider two permutations $\omega',\omega$ on $[n]$ and assume that the pair
$(\omega',\omega)$ satisfies the condition:
   \begin{numitem1}
for any $i,j\in[n]$, either $I_{\omega'}^i\lessdot I_\omega^j$ or
$I_{\omega'}^i\supseteq I_\omega^j$.
  \label{eq:omega-omega}
  \end{numitem1}
In particular, this implies that $P_\omega$ lies on the right from
$P_{\omega'}$, i.e. each point $v^i_\omega$ lies on the right from
$v^i_{\omega'}$ in the corresponding horizontal line, with possibly
$v^i_\omega=v^i_{\omega'}$.

For the closed region $Z(\omega',\omega)$ bounded by the paths $P_{\omega'}$
(the left boundary) and $P_{\omega}$ (the right boundary), we can consider a
g-tiling $T$. It is defined by axioms (T2),(T3) as before and slightly modified
axioms (T1),(T4), where (cf. Subsection~\ref{ssec:tiling}): in~(T1), the first
condition is replaced by the requirement that each edge in $(P_{\omega'}\cup
P_\omega)-(P_{\omega'}\cap P_\omega)$ belong to exactly one tile; and in~(T4),
it is now required that $D_T\cup \sigma(P_{\omega'}\cap P_\omega)$ be simply
connected, where, as before, $D_T$ denotes $\cup(\sigma(\tau)\colon \tau\in
T)$. Also one should include in the graph $G_T=(V_T,E_T)$ all common vertices
and edges of $P_{\omega'},P_\omega$. Note that such a $T$ possesses the
following properties:
   \begin{numitem1}
(i) the union of tiles in $T$ and the edges in $P_{\omega'}\cap P_\omega$ is
exactly $Z(\omega',\omega)$; and (ii) all vertices in $bd(Z(\omega',\omega))
=P_{\omega'}\cup P_\omega$ are nonterminal.
   \label{eq:bd_oo}
   \end{numitem1}

This is seen as follows. Let an edge $e$ of height $h$ belong to two tiles
$\tau,\tau'\in T$ (where the height of an edge is the half-sum of the heights
of its ends). Suppose $\tau\cup\tau'$ contains no edge of height $h$ lying on
the right from $e$. Then one of these tiles, say, $\tau$, is black and either
$e=br(\tau)$ or $e=rt(\tau)$. Assuming $e=br(\tau)$ (the other case is
similar), take the white tile $\tau''$ with $rt(\tau'')=rt(\tau)$. Then the
edge $br(\tau'')$ has height $h$ and lies on the right from $e$. So $e$ cannot
belong to the ``right boundary'' of $\cup(\tau\in T)$. By similar reasonings,
$e$ cannot belong to the ``left boundary'' of $\cup(\tau\in T)$. This
yields~(i). Property~(ii) for the vertices $z_0,z_n$ easily follows from~(i),
and is trivial for the other vertices.

In view of~\refeq{bd_oo}(i), we may speak of $T$ as a g-tiling \emph{on}
$Z(\omega',\omega)$. We proceed with several observations.\smallskip

(O1) When $\omega',\omega$ obey~\refeq{omega-omega}, at least one g-tiling,
even a pure tiling, on $Z(\omega',\omega)$ does exist (assuming
$\omega\ne\omega'$). (This generalizes a result in~\cite{El} where $\omega'=id$
and $\omega$ is arbitrary; in this case~\refeq{omega-omega} is obvious.) It can
be constructed by the following procedure, that we call \emph{stripping
$Z(\omega',\omega)$ along $P_\omega$ from below}. At the first iteration of
this procedure, we take the minimum $i$ such that the edges
$e^i_{\omega'},e^i_\omega$ are different, and take the edge $e^k_\omega$ such
that $\omega'^{-1}(i)=\omega^{-1}(k)=:c$. Then $k>i$. Let $P'$ be the part of
$P_\omega$ from $v^{i-1}_{\omega'}=v^{i-1}_{\omega}$ to $v^{k-1}_\omega$.
Using~\refeq{omega-omega} for this $i$ and $j=i,\ldots,k-1$, one can see that
the label $\omega^{-1}(j)=:c_j$ of each edge $e^j_\omega$ of $P'$ is greater
than $c$. So we can form the $cc_j$-tiles $\tau_j$ with
$br(\tau_j)=e^j_\omega$. Then $b\ell(\tau_i)=e^i_{\omega'}$ and
$rt(\tau_{k-1})=e^k_{\omega}$. Therefore, these tiles determine $c$-strip $Q$
connecting the edge $e^i_{\omega'}$ to the edge $e^k_\omega$. Replace in
$P_\omega$ the subpath $P'$ followed by the edge $e^k_\omega$ by the edge
$e^i_{\omega'}$ followed by the left boundary of $Q$ (beginning at
$v^i_{\omega'}$ and ending at $v^k_\omega$). The obtained path $\tilde P$
determines the permutation $\omega''$ (i.e. $\tilde P=P_{\omega''}$) for which
the set $I_{\omega''}^j$ is expressed as $I_\omega^{j-1}\cup\{c\}$ for
$j=i,\ldots,k$, and is equal to $I_\omega^j$ otherwise. Using this, one can
check that \refeq{omega-omega} continues to hold when $\omega$ is replaced by
$\omega''$. At the second iteration, we handle the pair $(\omega',\omega'')$
(satisfying $|P_{\omega'}\cap P_{\omega''}|>|P_{\omega'}\cap P_\omega|$) in a
similar way, and so on until the current ``right'' path turns into
$P_{\omega'}$. The tiles constructed during the procedure give a pure tiling
$T$ on $Z(\omega',\omega)$, as required. \smallskip

(O2) Conversely, let $\omega',\omega$ be two permutations on $[n]$ such that
$P_\omega$ lies on the right from $P_{\omega'}$. Suppose that there exists a
pure tiling $T$ on $Z(\omega',\omega)$. Let $i,j\in[n]$. Then $I_{\omega'}^i$
is the set of edge labels in the subpath $P'$ of $P_{\omega'}$ from its
beginning to $v^i_{\omega'}$, and $I_\omega^j$ is the set of edge labels in the
subpath $P$ of $P_{\omega}$ from its beginning to $v^j_{\omega}$. Take
arbitrary elements $a\in I_{\omega'}^i$ and $b\in I_\omega^j$, and consider in
$T$ the $a$-strip $Q_a$ and the $b$-strip $Q_b$, each having the first edge on
$P_{\omega'}$ and the last edge on $P_\omega$. So $Q_a$ begins with an edge in
$P'$; if it ends with an edge in $P$, then $a$ is a common element of
$I_{\omega'}^i,I_\omega^j$. In its turn, $Q_b$ ends in $P$; if it begins in
$P'$, then $b\in I_{\omega'}^i\cap I_\omega^j$. On the other hand, if $Q_a$
ends in $P_\omega-P$ and $Q_b$ begins in $P_{\omega'}-P'$, then these strips
must cross at some tile $\tau\in T$. Moreover, since $T$ is a pure tiling, such
a $\tau$ is unique (which is easy to show). It is clear that $Q_a$ contains the
edge $b\ell(\tau)$, and $Q_b$ contains $br(\tau)$. Hence, $a<b$,
implying~\refeq{omega-omega}.
\smallskip

(O3) One more useful observation is that, for permutations $\omega'\ne\omega$,
the existence of a pure tiling on $Z(\omega',\omega)$ (subject to the
requirement that $P_\omega$ lie on the right from $P_{\omega'}$) is equivalent
to satisfying the \emph{weak Bruhat relation} $\omega'\prec\omega$. The latter
means that $Inv(\omega')\subset Inv(w)$, where $Inv(w'')$ denotes the set of
inversions for a permutation $\omega''$. This can be seen as follows. Let
$P_\omega$ lie on the right from $P_{\omega'}$ and let $T$ be a pure tiling on
$Z(\omega',\omega)$. It is easy to see that there exists a tile $\tau\in T$
that contains two consecutive edges $e^i_{\omega'},e^{i+1}_{\omega'}$ in
$P_{\omega'}$. Then $e^i_{\omega'}=b\ell(\tau)$ and $e^{i+1}_{\omega'}=\ell
t(\tau)$, and the label $c:=\omega'^{-1}(i)$ is less than the label
$c':=\omega'^{-1}(i+1)$. Therefore, $(c,c')\not\in Inv(\omega')$. Remove $\tau$
from $T$, obtaining a pure tiling on $Z(\omega'',\omega)$, where $\omega''$ is
formed from $\omega'$ by swapping $c$ and $c'$, i.e. $\omega''(c)=i+1$ and
$\omega''(c')=i$. Then $Inv(\omega'') =Inv(\omega')\cup\{(c,c')\}$. Repeat the
procedure for $\omega''$. Eventually, when the current left path turns into
$P_\omega$, we reach $\omega$. This yields $Inv(\omega')\subset Inv(\omega)$.
Note that the number $|T|$ of steps in the procedure is equal to
$|Inv(\omega)-Inv(\omega')|$, implying
$|\mathfrak{S}_T|=\ell(\omega)-\ell(\omega')+n+1$. \smallskip

(O4) Conversely, let $\omega'\prec \omega$. Take $i\in[n]$ such that
$e^i_{\omega'}\ne e^i_\omega$ and $e^j_{\omega'}= e^j_\omega$ for
$j=1,\ldots,i-1$. Let $c:=\omega^{-1}(i)$ and $k:=\omega'(c)$ (i.e. the edges
$e^i_\omega$ and $e^k_{\omega'}$ have the same label $c$). Clearly $k>i$. Let
$d$ be the label of $e^{k-1}_{\omega'}$. Then the inequality $k-1\ge i$ and the
choice of $i$ imply $\omega(d)>i$. So we have $\omega'(d)<\omega'(c)$ and
$\omega(d)>\omega(c)$. This is possible only if $d<c$ and $(d,c)\in
Inv(\omega)-Inv(\omega')$ (in view of $Inv(\omega')\subset Inv(\omega)$). Using
these facts, one can form the tile $\tau$ with $b\ell(\tau)=e^{k-1}_{\omega'}$
and $\ell t(\tau)=e^{k}_{\omega'}$ (taking into account that $d<c$). Replacing
in $P_{\omega'}$ the edges $e^{k-1}_{\omega'},e^{k}_{\omega'}$ by the other two
edges of $\tau$, we obtain the path corresponding to the permutation $\omega''$
satisfying $Inv(\omega'')=Inv(\omega')\cup \{(\tilde c,\tilde c')\}$. Repeating
the procedure step by step, we eventually reach $\omega$, and the tiles
constructed during the process give the desired pure tiling on
$Z(\omega',\omega)$.
\smallskip

(O5) Using flip techniques elaborated in~\cite{DKK-09}, one can show the
existence of a pure tiling on $Z(\omega',\omega)$ provided that a g-tiling $T$
on it exists. More precisely, extend $T$ to a g-tiling $\tilde T$ on $Z_n$ by
adding a pure tiling on $Z(id,\omega')$ and a pure tiling on
$Z(\omega,\omega_0)$. If $\tilde T$ has a black tile (contained in $T$), then,
as is shown in~\cite{DKK-09}~(Proposition~5.1), one can choose a black tile
$\tau$ with the following properties: (a) there are five nonterminal vertices
$Xi,Xk,Xij,Xik,Xjk$ (in set notation) such that $i<j<k$, the vertices
$Xij,Xik,Xjk$ are connected by edges to $t(\tau)$ (lying in the cone of $\tau$
at $t(\tau)$); and (b) replacing $Xik$ by $Xj$ (the \emph{lowering flip} w.r.t.
the above quintuple) makes the spectrum of some other g-tiling $\tilde T'$ on
$Z_n$. Moreover, the transformation $\tilde T\mapsto \tilde T'$ is local and
involves only tiles having a vertex at $Xik$ (and the new tiles have a vertex
at $Xj$). This implies that the tiles of $\tilde T'$ contained in
$Z(\omega',\omega)$ form a g-tiling $T'$ on it, whereas the other tiles (lying
within $Z(id,\omega')\cup Z(\omega,\omega_0)$) are exactly the same as those in
$\tilde T$. Since the flip decreases (by one) the total size of sets in the
spectrum, we can conclude that a pure tiling for $Z(\omega',\omega)$ does
exist.
\smallskip

(O6) Suppose that the path $P_\omega$ lies on the right from $P_{\omega'}$ and
that the sets $I_{\omega'}^i,I_\omega^j$ (over all $i,j=1,\ldots,n$) form a
ws-collection. By Theorem~B, ~$\Cscr$ is extendable to a largest ws-collection
$\Cscr'$, and by Theorem~\ref{tm:til-ws}, there exists a g-tiling $T$ on $Z_n$
with $\mathfrak{S}_T=\Cscr'$. All sets in $\Cscr$ are nonterminal vertices of
$T$, and by Proposition~\ref{pr:4edges}(i), all edges in $P_{\omega'}$ and
$P_\omega$ are edges of $T$. Let $T'$ be the set of tiles $\tau\in T$ such that
$\sigma(\tau)$ lies in the simply connected region in $D_T$ bounded by
$\sigma(P_{\omega'})$ and $\sigma(P_\omega)$. Then $T'$ is a g-tiling on
$Z(\omega',\omega)$.
\smallskip

Summing up the above observations, we obtain the following
  \begin{theorem} \label{tm:equiv}
~For distinct permutations $\omega',\omega$ on $[n]$, the following are
equivalent:

{\rm(i)} $\omega',\omega$ satisfy~\refeq{omega-omega};

{\rm(ii)} $\omega',\omega$ satisfy the weak Bruhat relation
$\omega'\prec\omega$;

{\rm(iii)} $P_\omega$ lies on the right from $P_{\omega'}$ and
$Z(\omega',\omega)$ admits a pure tiling;

{\rm(iv)} $P_\omega$ lies on the right from $P_{\omega'}$ and
$Z(\omega',\omega)$ admits a generalized tiling;

{\rm(v)} $P_\omega$ lies on the right from $P_{\omega'}$ and
$\{I_\omega^i,I_{\omega'}^i\colon i=1\ldots,n\}$ is a ws-collection.
  \end{theorem}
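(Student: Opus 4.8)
The statement is a bundling of the observations (O1)--(O6) together with Theorem~\ref{tm:til-ws}, so the plan is to assemble them into equivalences passing through all five conditions rather than to prove anything genuinely new. I would take~(iii) (``$P_\omega$ lies on the right from $P_{\omega'}$ and $Z(\omega',\omega)$ admits a pure tiling'') as the hub and show that each of~(i),~(ii),~(iv),~(v) is equivalent to it.

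For (i)~$\Leftrightarrow$~(iii): the forward direction is exactly (O1), which produces a pure tiling from \refeq{omega-omega}, together with the remark right after \refeq{omega-omega} that \refeq{omega-omega} forces $P_\omega$ to lie on the right of $P_{\omega'}$; the reverse direction is exactly (O2). For (ii)~$\Leftrightarrow$~(iii): the forward direction (O4) constructs a pure tiling on $Z(\omega',\omega)$ from the weak Bruhat relation, and the reverse direction (O3) recovers $\omega'\prec\omega$ from a pure tiling. Here I would note that $\omega'\prec\omega$ forces $P_\omega$ to the right of $P_{\omega'}$ by the standard compatibility between the weak Bruhat order and the heights and positions of the ideal points $v^i_\omega=\sum(\xi_j\colon j\in\omega^{-1}[i])$, so the geometric clause of~(iii) comes for free. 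For (iii)~$\Leftrightarrow$~(iv): one direction is trivial since a pure tiling is a generalized tiling, and the other is precisely (O5), which flips any g-tiling on $Z(\omega',\omega)$ down to a pure one.

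The only implication not handed over verbatim by a single observation is the one that manufactures the ws-collection of condition~(v) from a tiling, i.e. (iv)~$\Rightarrow$~(v); the reverse (v)~$\Rightarrow$~(iv) is (O6). The plan for (iv)~$\Rightarrow$~(v) is to extend a g-tiling $T$ on $Z(\omega',\omega)$ to a g-tiling $\tilde T$ on the full zonogon $Z_n$ by gluing a pure tiling on $Z(id,\omega')$ and a pure tiling on $Z(\omega,\omega_0)$ along the seams $P_{\omega'}$ and $P_\omega$; these two pure tilings exist because $id\prec\omega'$ and $\omega\prec\omega_0$ always hold, so (O4) (or (O1)) applies. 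Then Theorem~\ref{tm:til-ws} makes $\mathfrak{S}_{\tilde T}$ a largest ws-collection, and since every ideal set $I_{\omega'}^i,I_\omega^j$ lies on the boundary $P_{\omega'}\cup P_\omega$ and hence is a nonterminal vertex of $\tilde T$ (cf.~\refeq{bd_oo}(ii)), the family $\{I_\omega^i,I_{\omega'}^i\}$ is a subcollection of $\mathfrak{S}_{\tilde T}$ and is therefore weakly separated, giving~(v).

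I expect the main obstacle to be this last extension step: one must check that the three tilings really do fit together into a single valid g-tiling on $Z_n$ (that the glued object still satisfies (T1)--(T4) along the seams $P_{\omega'},P_\omega$) and that the required auxiliary pure tilings on $Z(id,\omega')$ and $Z(\omega,\omega_0)$ are indeed available. A secondary, more routine, point to verify is the order-theoretic claim used in (ii)~$\Rightarrow$~(iii) that $\omega'\prec\omega$ already guarantees $P_\omega$ lies on the right of $P_{\omega'}$, which keeps the geometric half of~(iii) from having to be argued separately.
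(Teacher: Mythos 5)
Your proposal is correct and is essentially the paper's own proof: the paper establishes Theorem~\ref{tm:equiv} simply by ``summing up'' observations (O1)--(O6), which is exactly the assembly you perform, with (iii) as the hub --- (O1)/(O2) for (i)$\Leftrightarrow$(iii), (O4)/(O3) for (ii)$\Leftrightarrow$(iii), triviality/(O5) for (iii)$\Leftrightarrow$(iv), and (O6) for (v)$\Rightarrow$(iv). The one leg you build yourself, (iv)$\Rightarrow$(v) by gluing pure tilings on $Z(id,\omega')$ and $Z(\omega,\omega_0)$ onto $T$ and invoking Theorem~\ref{tm:til-ws} and \refeq{bd_oo}(ii), is sound --- the gluing you flag as the main obstacle is the same one the paper uses without comment inside (O5), and \refeq{bd_oo}(ii) ensures no black tile of $T$ has an edge on the seams, so axioms (T1)--(T4) survive --- but it is heavier than needed: (v) follows directly from (i), since \refeq{omega-omega} makes every cross pair $I_{\omega'}^i,I_\omega^j$ weakly separated by definition (either $\lessdot$ or containment), ideals of a single permutation are nested and hence weakly separated, and the geometric clause is noted in the paper immediately after \refeq{omega-omega}. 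Your secondary worry about (ii)$\Rightarrow$(iii) is likewise harmless: the stepwise construction in (O4) moves the current path rightward one tile at a time from $P_{\omega'}$ to $P_\omega$, so the statement that $P_\omega$ lies on the right comes out of that construction for free.
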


Now return to the case of one permutation $\omega$. Let us apply the procedure
of stripping $Z(\omega,\omega_0)$ along $rbd(Z_n)$ \emph{from above} (cf. the
procedure in~(O1)). One can check that the pure tiling $T''$ on
$Z(\omega,\omega_0)$ obtained in this way has the spectrum $\mathfrak{S}_{T''}$
to be exactly the $\omega$-checker $\Cscr^0_\omega$ defined in~\refeq{C0}. Such
a $T''$ for $n=5$ and $\omega=31524$ is illustrated in the picture.
   \begin{center}
  \unitlength=1mm
  \begin{picture}(40,30)
 %
  \put(20,0){\line(-2,1){12}}
  \put(20,6){\line(-2,1){12}}
  \put(32,12){\line(-2,1){12}}
  \put(26,18){\line(-2,1){12}}
  \put(32,24){\line(-2,1){12}}
  \put(8,6){\line(-1,1){6}}
  \put(38,18){\line(-1,1){6}}
  \put(20,18){\line(-1,1){6}}
  \put(32,12){\line(-1,1){6}}
  \put(2,12){\line(0,1){6}}
  \put(20,0){\line(0,1){6}}
  \put(32,6){\line(0,1){6}}
  \put(38,12){\line(0,1){6}}
  \put(20,0){\line(2,1){12}}
  \put(20,6){\line(2,1){12}}
  \put(8,12){\line(2,1){12}}
  \put(8,24){\line(2,1){12}}
  \put(2,18){\line(1,1){6}}
  \put(14,24){\line(1,1){6}}
  \put(26,18){\line(1,1){6}}
  \put(32,12){\line(1,1){6}}
  \put(32,6){\line(1,1){6}}
 \put(7.5,15){$P_\omega$}
 \put(18,2){3}
 \put(13,6){1}
 \put(14,12){5}
 \put(14,19){2}
 \put(17.5,25){4}
  \end{picture}
   \end{center}

\noindent \textbf{Remark.} We refer to $T''$ as above as the \emph{standard
tiling} on $Z(\omega,\omega_0)$. This adopts, to the $\omega$ case, terminology
from~\cite{DKK-09} where a similar tiling for $\omega=id$ is called the
standard tiling on the zonogon $Z_n$. The spectrum of the latter consists of
all intervals in $[n]$; this is just the collection of $X\cap Y$ over all
vertices $X$ in $\ell bd(Z_n)$ (i.e. the ideals for $id$) and all vertices $Y$
in $rbd(Z_n)$ (i.e. the ideals for $\omega_0$). The spectrum of $T''$ possesses
a similar property: it is the collection $\{I_\omega^i\cap I_{\omega_0}^j\colon
i,j\in[n]\}$ (with repeated sets ignored). (Cf.~\refeq{C0} where the term
$[j..n]$ is just $j$-th ideal for $\omega_0$. One can see that withdrawal of
the condition $j\le\omega^{-1}(k)$ results in the same collection
$\Cscr_\omega^0$.) Also one can check that the same tiling $T''$ is obtained if
we make stripping $Z(\omega,\omega_0)$ \emph{along} $P_\omega$ from above. It
turns out that a similar phenomenon takes place for any permutations
$\omega',\omega$ obeying~\refeq{omega-omega}: one can show that stripping
$Z(\omega',\omega)$ along $P_\omega$ (or along $P_{\omega'}$) from above
results in a pure tiling on $Z(\omega',\omega)$ whose spectrum consists of all
(different) sets of the form $I_{\omega'}^i\cap I_{\omega}^j$, $i,j\in[n]$; we
may refer to it as the \emph{standard tiling for} $(\omega',\omega)$.\medskip

By reasonings in Section~\ref{sec:omega}, for any maximal $\omega$-chamber
ws-collection $\Cscr$, the collection $\Dscr:=\Cscr\cup \Cscr^0_\omega$ is a
largest ws-collection. So, by Theorem~\ref{tm:til-ws}, $\Dscr=\mathfrak{S}_T$
for some g-tiling $T$ on $Z_n$. In view of Proposition~\ref{pr:4edges}, $T$
must include the subtiling $T''$ as above. Then each edge in $(P_{id}\cup
P_\omega)-(P_{id}\cap P_\omega)$ belongs to exactly one tile in $T':=T-T''$;
this implies that $T'$ is a g-tiling on $Z(id,\omega)$, and we can conclude
that $\Cscr=\mathfrak{S}_{T'}$. Conversely (in view of
Theorem~\ref{tm:checker}), for any g-tiling $T'$ on $Z(id,\omega)$,
~$\mathfrak{S}_{T'}$ is a maximal $\omega$-chamber collection. One can see that
the role of $\omega$-checker can be played, in essence, by the spectrum of
\emph{any} pure, or even generalized, tiling $T''$ on $Z(\omega,\omega_0)$
(i.e. Theorem~\ref{tm:checker} remains valid if we take $\mathfrak{S}_{T''}$ in
place of $\Cscr^0_\omega$). Thus, we obtain the following
   \begin{corollary} \label{cor:chamb-til}
{\rm(a)} Any maximal $\omega$-chamber ws-collection in $2^{[n]}$ is the
spectrum of some g-tiling on $Z(id,\omega)$, and vice versa. In particular, any
$\omega$-chamber set $X$ lies on the left from the path $P_\omega$ (regarding
$X$ as a point).

{\rm(b)} For any fixed g-tiling $T''$ on $Z(\omega,\omega_0)$, ~$X\subseteq[n]$
is an $\omega$-chamber set if and only if
$X\not\in\mathfrak{S}_{T''}-\Iscr_\omega$ and $X\wsep \mathfrak{S}_{T''}$,
where $\Iscr_\omega:=\{I_\omega^0,\ldots,I_\omega^n\}$.

{\rm(c)} $X\subseteq[n]$ is an $\omega$-chamber set if and only if $X\wsep
\Iscr_\omega$ and $X\lessdot I_\omega^{|X|}$.
   \end{corollary}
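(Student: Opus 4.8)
The plan is to turn the paragraph preceding the corollary into proofs of parts (a), (c), (b) in that order, the only genuinely new input being needed for (b). The common device is a \emph{gluing} observation: if a g-tiling $T$ on $Z_n$ has all ideals $I_\omega^0,\ldots,I_\omega^n$ in its spectrum, then by Proposition~\ref{pr:4edges}(i) every edge of $P_\omega$ (joining the consecutive nonterminal vertices $I_\omega^{i},I_\omega^{i+1}$) lies in $G_T$, so $\sigma(P_\omega)$ separates the disc $D_T$ and $T$ decomposes as $T'\cup T''$ with $T'$ a g-tiling on $Z(id,\omega)$, $T''$ a g-tiling on $Z(\omega,\omega_0)$, and $\mathfrak{S}_T=\mathfrak{S}_{T'}\cup\mathfrak{S}_{T''}$; conversely any such pair glues back into a g-tiling on $Z_n$ (each shared edge of $P_\omega$ receives one tile from each side, so (T1) holds; axioms (T2),(T3) are local and the boundary vertices are nonterminal by~\refeq{bd_oo}(ii); and two discs glued along a boundary path form a disc, so (T4) holds). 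For part~(a) I first record that a maximal $\omega$-chamber ws-collection $\Cscr$ contains $\Iscr_\omega$: the ideals are $\omega$-chamber, and by Lemma~\ref{lm:Xchamb} every $\omega$-chamber set is weakly separated from each ideal, so maximality forces $\Iscr_\omega\subseteq\Cscr$. The forward direction then runs as in the preamble: $\Dscr:=\Cscr\cup\Cscr^0_\omega$ is a largest ws-collection, hence $\Dscr=\mathfrak{S}_T$ (Theorem~\ref{tm:til-ws}); since $\Cscr^0_\omega\subseteq\mathfrak{S}_T$, the gluing splits $T=T'\cup T''_0$ with $T''_0$ the standard tiling $(\mathfrak{S}_{T''_0}=\Cscr^0_\omega)$, and $\mathfrak{S}_{T'}=\Dscr\cap Z(id,\omega)=\Cscr\cup\Iscr_\omega=\Cscr$. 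Conversely, gluing any g-tiling $T'$ on $Z(id,\omega)$ to $T''_0$ gives $T$ on $Z_n$ with $\mathfrak{S}_T$ largest, whence Theorem~\ref{tm:checker} (exactly the reduction in Section~\ref{sec:omega}) shows $\mathfrak{S}_{T'}=\mathfrak{S}_T-(\Cscr^0_\omega-\Iscr_\omega)$ is a maximal $\omega$-chamber collection. The ``in particular'' claim is immediate: an $\omega$-chamber $X$ lies in some maximal $\omega$-chamber collection $=\mathfrak{S}_{T'}$, hence is a vertex of a tiling on $Z(id,\omega)$ and so lies weakly left of $P_\omega$.

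Part (c) is essentially already in Section~\ref{sec:omega}. For ``$\Rightarrow$'', Lemma~\ref{lm:Xchamb} gives $X\wsep\Cscr^0_\omega$, hence $X\wsep\Iscr_\omega$, and taking $Y=I_\omega^{|X|}$ there yields $X\lessdot Y$ whenever $X\ne Y$ (the case $X=Y$ being an ideal, trivially $\omega$-chamber). For ``$\Leftarrow$'', I observe that the proof of Lemma~\ref{lm:XprecY} uses only weak separation of $X$ from the ideals $I_\omega^{\omega(j)}$ together with the hypothesis $X\lessdot I_\omega^{|X|}$; thus, given $X\wsep\Iscr_\omega$ and $X\lessdoteq I_\omega^{|X|}$, that same argument shows $X$ is $\omega$-chamber. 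Here I read the displayed condition as $X\lessdoteq I_\omega^{|X|}$, so that ideals ($X=I_\omega^{|X|}$) are covered.

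Part (b) is the only part needing something new. First, $\Iscr_\omega\subseteq\mathfrak{S}_{T''}$ because the ideals lie on the left boundary $P_\omega$ of $Z(\omega,\omega_0)$ and are nonterminal by~\refeq{bd_oo}(ii); and every $Z\in\mathfrak{S}_{T''}-\Iscr_\omega$ lies strictly right of $P_\omega$, being a nonterminal vertex of $T''$ off the left boundary. For ``$\Rightarrow$'': an $\omega$-chamber $X$ lies weakly left of $P_\omega$ by part~(a), hence equals no such $Z$, so $X\notin\mathfrak{S}_{T''}-\Iscr_\omega$; and realizing $X$ in some $\mathfrak{S}_{T'}$ on $Z(id,\omega)$ and gluing to $T''$ gives a g-tiling $T$ with $\mathfrak{S}_T=\mathfrak{S}_{T'}\cup\mathfrak{S}_{T''}$ a ws-collection (Theorem~\ref{tm:til-ws}), so $X\wsep\mathfrak{S}_{T''}$. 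For ``$\Leftarrow$'': from $\Iscr_\omega\subseteq\mathfrak{S}_{T''}$ and $X\wsep\mathfrak{S}_{T''}$ we get $X\wsep\Iscr_\omega$, and by part~(c) it remains only to show $X\lessdoteq I_\omega^{|X|}$, i.e. that $X$ is not strictly right of $P_\omega$.

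Ruling this out is the main obstacle; the argument I expect to use is a ``regional maximality'' of $\mathfrak{S}_{T''}$. Suppose $X$ lay strictly right of $P_\omega$. Since $X\wsep\mathfrak{S}_{T''}$ and $X\notin\mathfrak{S}_{T''}$, the collection $\mathfrak{S}_{T''}\cup\{X\}$ is weakly separated; extend it (Theorem~B) to a largest ws-collection $\mathfrak{S}_S$ (Theorem~\ref{tm:til-ws}). As $\mathfrak{S}_S$ contains all ideals of $\omega$, the gluing splits $S=S'\cup S''$ along $P_\omega$, and $X$, being strictly right of $P_\omega$, is a nonterminal vertex of $S''$, so $X\in\mathfrak{S}_{S''}$. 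Now $\mathfrak{S}_{T''}\subseteq\mathfrak{S}_{S''}$ with $T'',S''$ both g-tilings on $Z(\omega,\omega_0)$; but all g-tiling spectra on a fixed region $Z(\omega,\omega_0)$ have the same cardinality (a flip replaces one vertex by another, leaving $|\mathfrak{S}|$ unchanged, and by observation~(O5) every g-tiling on the region flips to a pure one, of spectrum size $\ell(\omega_0)-\ell(\omega)+n+1$ by observation~(O3)). Hence $\mathfrak{S}_{T''}=\mathfrak{S}_{S''}\ni X$, contradicting $X\notin\mathfrak{S}_{T''}$. Thus $X\lessdoteq I_\omega^{|X|}$, and part~(c) concludes the proof. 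The delicate points to check are the well-definedness of the split $S=S'\cup S''$ (planarity of $\sigma(G_S)$ and that $\sigma(P_\omega)$ separates $D_S$) and the flip-invariance of the \emph{regional} spectrum size, i.e. that the local flips of~(O5) carry one g-tiling on $Z(\omega,\omega_0)$ to another without changing $|\mathfrak{S}|$.
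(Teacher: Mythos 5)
Your treatment of (a) and (c) is essentially the paper's own: part (a) is the paragraph preceding the corollary (Proposition~\ref{pr:4edges} forces the standard tiling into $T$, splitting off a g-tiling on $Z(id,\omega)$, with Theorem~\ref{tm:checker} for the converse), and part (c) is exactly what the Section~\ref{sec:omega} lemmas give, since the proof of Lemma~\ref{lm:XprecY} uses only weak separation from the ideals; your reading of $\lessdot$ as $\lessdoteq$ in (c) is the right repair of the statement. Part (b) is where the paper says only ``one can see'' that any tiling on $Z(\omega,\omega_0)$ can serve as the checker, so your regional-maximality argument (extend $\mathfrak{S}_{T''}\cup\{X\}$ via Theorem~B, split the resulting tiling $S$ along $P_\omega$, and use the flip-invariance of the regional spectrum size coming from observations (O3) and (O5)) is a genuine completion rather than a transcription, and its core is sound: all g-tilings on $Z(\omega,\omega_0)$ do have spectra of one cardinality, so $\mathfrak{S}_{T''}\subseteq\mathfrak{S}_{S''}$ forces $\mathfrak{S}_{T''}=\mathfrak{S}_{S''}$.

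There is, however, one genuine gap, in your reduction step for (b): you assert that (given $X\wsep\Iscr_\omega$) proving $X\lessdoteq I_\omega^{|X|}$ is the same as (``i.e.'') proving that $X$ is not strictly right of $P_\omega$. These are not interchangeable without an argument. For weakly separated sets of equal cardinality the relation may be $X\rhd I_\omega^{|X|}$ or $I_\omega^{|X|}\rhd X$, and in those cases which side of $P_\omega$ the point $X$ falls on depends on the choice of the vectors $\xi_i$ (for instance, $\{2,3\}$ versus $\{1,4\}$ satisfy $\{2,3\}\rhd\{1,4\}$, yet the point $\xi_2+\xi_3$ can be placed on either side of $\xi_1+\xi_4$); so ``not strictly right'' does not by itself yield $\lessdoteq$. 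The equivalence does hold under the hypothesis $X\wsep\Iscr_\omega$, but only after both $\rhd$ relations are excluded by arguments in the style of Lemmas~\ref{lm:YprecX}--\ref{lm:XrhdY} (the proof of Lemma~\ref{lm:XrhdY} indeed uses only ideals, and a minimal-index argument over $c\in I_\omega^{|X|}-X$ excludes $I_\omega^{|X|}\rhd X$), and you never supply this. The cleanest repair stays entirely inside your own construction: drop part (c) and the geometry from this step, note that the split gives $\mathfrak{S}_S=\mathfrak{S}_{S'}\cup\mathfrak{S}_{S''}$, rule out $X\in\mathfrak{S}_{S''}$ exactly as you do (by the cardinality contradiction), and conclude $X\in\mathfrak{S}_{S'}$, which by part (a) is a maximal $\omega$-chamber ws-collection, so $X$ is an $\omega$-chamber set.
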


(Note that $X\wsep \Iscr_\omega$ and $X\lessdot I_\omega^{|X|}$ easily imply
that either $X\lessdot I_\omega^i$ or $X\supseteq I_\omega^i$ holds for each
$i$.) In light of~(a) in this corollary, it is not confusing to refer to an
$\omega$-chamber set $X$ as a \emph{left set} for $\omega$. A reasonable
question is how to characterize, in terms of $\omega$, the corresponding sets
$X$ lying on the right from $P_\omega$ (i.e. when $X$ belongs to the spectrum
of some g-tiling on the right region $Z(\omega,\omega_0)$ for $\omega$). To do
this, suppose we turn the zonogon at $180^\circ$ and reverse the edges. Then
the path $P_{\omega}$ reverses and becomes the path $P_{\bar \omega}$ for
$\bar\omega:=\omega_0\omega$, each set $X\subseteq[n]$ is replaced by $[n]-X$,
and $Z(\omega,\omega_0)$ turns into the region $Z(id,\bar\omega)$ lying on the
left from $P_{\bar\omega}$. The spectrum of any g-tiling on $Z(id,\bar\omega)$
is formed by left sets $[n]-X$ for $\bar\omega$, and when going back to the
original $X$ (thus lying in the region $Z(\omega,\omega_0)$), we observe that
$X$ is characterized by the condition:
  \begin{numitem1}
for each $i\in X$, ~$X$ contains all $j\in[n]$ such that $j>i$ and
$\bar\omega(j)>\bar\omega(i)$; equivalently: $X$ contains all $j$ such that
$j>i$ but $\omega(j)<\omega(i)$.
  \label{eq:co-chamb}
  \end{numitem1}
We refer to such an $X$ as a \emph{right set} for $\omega$.
\smallskip

Finally, consider again two permutations $\omega',\omega$ and let
$\omega'\prec\omega$.  Representing the middle region $Z(\omega',\omega)$ as
the intersection of $Z(id,\omega)$ and $Z(\omega',\omega_0)$ and relying on the
analysis above, we can conclude with the following generalization of Theorem~A.
\medskip

 \noindent\textbf{Theorem A$'$}
\emph{~Let $\omega,\omega'$ be two permutations on $[n]$ satisfying
$\omega'\prec\omega$. Then all maximal ws-collections $\Cscr\subseteq
2^{[n]}$ whose members $X$ are simultaneously left sets for $\omega$
and right sets for $\omega'$ have the same cardinality; namely,
$|\Cscr|=\ell(\omega)-\ell(\omega')+n+1$. These collections $\Cscr$ are
precisely the spectra of g-tilings on $Z(\omega',\omega)$.}
 \medskip

The sets $X$ figured in this theorem can be alternatively characterized by the
condition: for $i,j\in[n]$, if $\omega'(i)\prec\omega'(j)$,
~$\omega(i)\prec\omega(j)$ and $j\in X$, then $i\in X$, i.e. $X$ is an ideal of
the partial order on $[n]$ that is the intersection of two linear orders, one
being generated by the path $P_{\omega'}$, and the other by $P_\omega$.

\end{document}